\documentclass{amsart}

\usepackage{amssymb}
\usepackage{enumitem}
\usepackage{graphicx}

\newtheorem{theorem}{Theorem}[section]
\newtheorem{corollary}[theorem]{Corollary}
\newtheorem{lemma}[theorem]{Lemma}

\newtheorem{problem}[theorem]{Problem}

\theoremstyle{definition}
\newtheorem{definition}[theorem]{Definition}
\newtheorem{remark}[theorem]{Remark}
\newtheorem{example}[theorem]{Example}

\usepackage{color}

\numberwithin{equation}{section}

\begin{document}

\thispagestyle{empty}

\title[Weak Riesz energy. Condensers with touching plates]{A concept of weak Riesz energy with application to condensers with touching plates}

\author{Natalia Zorii}
\address{\footnotesize{Institute of Mathematics\\ Academy of Sciences
of Ukraine\\
Teresh\-chen\-kivska~3\\
01601 Kyiv, Ukraine}}
\email{\footnotesize{natalia.zorii@gmail.com}}


\date{}

\begin{abstract} We proceed further with the study of minimum weak Riesz energy problems for condensers with touching plates, initiated jointly with Bent Fuglede (Potential Anal. {\bf 51} (2019), 197--217). Having now added to the analysis constraint and external source of energy, we obtain a Gauss type problem, but with weak energy involved. We establish sufficient and/or necessary conditions for the existence of solutions to the problem and describe their potentials. Treating the solution as a function of the condenser and the constraint, we prove its continuity relative to the vague topology and the topologies determined by the weak and standard energy norms. We show that the criteria for the solvability thus obtained fail in general once the problem is reformulated in the setting of standard energy, thereby justifying an advantage of weak energy when dealing with condensers with touching plates.
\end{abstract}

\subjclass[2010]{Primary 31C15}

\keywords{Standard and weak Riesz energies of Radon measures, condensers with touching plates, minimum energy problems, constraints, external fields}

\maketitle

\section{Standard and weak Riesz energies of measures}\label{sec:intr} In potential theory on $\mathbb R^n$, $n\geqslant3$, relative to the {\it Riesz kernel\/} $\kappa_\alpha(x,y):=|x-y|^{\alpha-n}$ of order $\alpha\in(0,2]$, we proceed further with the study of minimum weak energy problems, initiated jointly with Bent Fuglede \cite{FZ-Pot1}. We are motivated by the observation that the standard concept of energy is too restrictive when dealing with condensers with touching plates, while application of weak energy allows the treatment of much broader condenser problems.

Let $\mathfrak M(\mathbb R^n)$ stand for the linear space of all real-val\-ued Radon measures $\mu$ on $\mathbb R^n$ equipped with the
{\it vague\/} topology, i.e.\ the topology of pointwise convergence on the class $C_0(\mathbb R^n)$ of all
(real-val\-ued finite) continuous functions on $\mathbb R^n$ with compact support, and let $\mathfrak M^+(\mathbb R^n)$ be the cone of all positive $\mu\in\mathfrak M(\mathbb R^n)$.

The {\it standard\/} concept of $\alpha$-Riesz {\it energy\/} of $\mu\in\mathfrak M(\mathbb R^n)$ is introduced by
\begin{equation}\label{en-st}E_\alpha(\mu):=E_{\kappa_\alpha}(\mu):=\int\kappa_\alpha(x,y)\,d(\mu\otimes\mu)(x,y)\end{equation}
provided that $E_\alpha(\mu^+)+E_\alpha(\mu^-)$ or $E_\alpha(\mu^+,\mu^-)$ is finite, and the finiteness of
$E_\alpha(\mu)$ means that $\kappa_\alpha$ is $(|\mu|\otimes|\mu|)$-integrable, i.e.\ $E_\alpha(|\mu|)<\infty$. Here $\mu^+$ and $\mu^-$ denote the positive and negative parts in the Hahn--Jor\-dan
decomposition of $\mu$,
\[E_\alpha(\mu^+,\mu^-):=\int\kappa_\alpha(x,y)\,d(\mu^+\otimes\mu^-)(x,y)\]
is the (standard) $\alpha$-Riesz {\it mutual energy\/} of $\mu^+$ and $\mu^-$, and $|\mu|:=\mu^++\mu^-$.

The Riesz kernel is {\it strictly positive definite\/} in the sense that $E_\alpha(\mu)\geqslant0$ for any $\mu\in\mathfrak M(\mathbb R^n)$ (whenever defined), and $E_\alpha(\mu)=0$ only for $\mu=0$. This implies that all $\mu\in\mathfrak M(\mathbb R^n)$ with finite $E_\alpha(\mu)$ form a pre-Hilbert space $\mathcal E_\alpha=\mathcal E_\alpha(\mathbb R^n)$ with the (standard) inner product $\langle\mu,\nu\rangle_\alpha:=E_\alpha(\mu,\nu)$ and the (standard energy) norm $\|\mu\|_\alpha:=\sqrt{E_\alpha(\mu)}$. The topology on $\mathcal E_\alpha$ determined by the norm $\|\cdot\|_\alpha$ is said to be {\it strong}. The cone $\mathcal E^+_\alpha:=\mathcal E_\alpha\cap\mathfrak M^+(\mathbb R^n)$ is strongly complete \cite{Ca1}, and this fundamental fact is crucial to the treatment of minimum energy problems over $\mu\in\mathcal E^+_\alpha$.

The whole space $\mathcal E_\alpha$ is however strongly incomplete \cite{Ca1}, which causes substantial difficulties in the investigation of minimum Riesz energy problems for condensers. As was shown earlier by the author (see below for quoted results), those difficulties can be overcome in the framework of the standard approach to definition of energy, provided that a condenser in question satisfies the separation condition (\ref{dist}).

More precisely, fix an (open connected) domain $D\subset\mathbb R^n$ and a relatively closed subset $A$ of $D$. We call the ordered pair $\mathbf A:=(A,F)$, where $F:=D^c:=\mathbb R^n\setminus D$, a ({\it generalized\/}) {\it condenser}, and $A$ and $F$ its {\it plates}. To avoid trivialities, assume
\begin{equation}\label{nonzero}c_\alpha(A)\cdot c_\alpha(F)>0,\end{equation}
where $c_\alpha(\cdot)$ denotes the inner $\alpha$-Riesz {\it capacity\/} \cite{L}.
Let $\mathfrak M(\mathbf A)$ consist of all $\mu\in\mathfrak M(\mathbb R^n)$ such that $\mu^+$ and $\mu^-$ are carried by $A$ and $F$, respectively.\footnote{$\mu\in\mathfrak M(\mathbb R^n)$ is said to be {\it carried\/} by a set $Q\subset\mathbb R^n$ if $Q$ is $|\mu|$-meas\-ur\-able and $Q^c$ is $|\mu|$-neg\-lig\-ible.}
Write
\begin{align}
&\mathfrak M(\mathbf A,\mathbf1):=\bigl\{\mu\in\mathfrak M(\mathbf A): \ \mu^+(A)=\mu^-(F)=1\bigr\},\notag\\
&\mathcal E_\alpha(\mathbf A):=\mathcal E_\alpha\cap\mathfrak M(\mathbf A),\notag\\
&\mathcal E_\alpha(\mathbf A,\mathbf1):=\mathcal E_\alpha\cap\mathfrak M(\mathbf A,\mathbf1),\label{eprcl}
\end{align}
where $\mathbf1:=(1,1)$. In view of (\ref{nonzero}), $\mathcal E_\alpha(\mathbf A,\mathbf1)$ is nonempty (cf.\ \cite[Lemma~2.3.1]{F1}), and hence the following minimum (standard) Riesz energy problem makes sense.

\begin{problem}\label{pr2}Does there exist $\lambda_{\mathbf A}\in\mathcal E_\alpha(\mathbf A,\mathbf1)$ with
\begin{equation}\label{epr}E_\alpha(\lambda_{\mathbf A})=w_\alpha(\mathbf A,\mathbf1):=\inf_{\mu\in\mathcal E_\alpha(\mathbf A,\mathbf1)}\,E_\alpha(\mu)?\end{equation}
\end{problem}

Assume for a moment that
\begin{equation}\label{dist}{\rm dist}(A,F):=\inf_{x\in A, \ y\in F}\,|x-y|>0.\end{equation}
Then a complete description of those $\mathbf A$ for which Problem~\ref{pr2} is solvable has been established in \cite[Section~5.1]{Z1}; see Example~\ref{rem-ex} below for an illustration of the results obtained. The approach applied is based on a strong completeness theorem stating that for any $q\in(0,\infty)$, the topological subspace $\mathcal E_\alpha^{\leqslant q}(\mathbf A)$ of $\mathcal E_\alpha$ consisting of all $\mu\in\mathcal E_\alpha(\mathbf A)$ with $|\mu|(\mathbb R^n)\leqslant q$ is complete in the induced strong topology, and the strong topology on $\mathcal E_\alpha^{\leqslant q}(\mathbf A)$ is stronger than the vague topology \cite[Theorem~1]{Z1}.\footnote{Later this approach has been extended to a general function kernel on a locally compact Hausdorff space and vector Radon measures, finite or infinite dimensions \cite{ZPot1,ZPot2,ZPot3}. See e.g.\ \cite[Theorem~13.1]{ZPot1} and \cite[Theorem~9.1]{ZPot2} for a vector analogue of \cite[Theorem~1]{Z1}.\label{foot3}} In particular, it follows from \cite[Section~5.1]{Z1} that if $D^c$ is not $\alpha$-thin at infinity, then a solution $\lambda_{\mathbf A}$ to Problem~\ref{pr2} exists if and only if there is an equilibrium measure $\gamma_A$ on $A$ relative to the $\alpha$-Green kernel on $D$; and moreover these $\lambda_{\mathbf A}$ and $\gamma_A$ are related to one another by the formula
\[\lambda_{\mathbf A}=(\gamma_A-\gamma_A')/\gamma_A(D),\]
where $\gamma_A'$ is the $\alpha$-Riesz balayage of $\gamma_A$ onto $F$. See Section~\ref{sec:prel} below for relevant definitions.

However, for a condenser with touching plates (when ${\rm dist}(A,F)=0$ is allowed), the approach that has been worked out in \cite{Z1} breaks down. Moreover, then the quoted result on the solvability of Problem~\ref{pr2} fails in general since one can construct a domain $D\subset\mathbb R^3$ and a relatively closed set $A\subset D$ such that the (classical) $2$-Green equilibrium measure $\gamma_A$ has infinite Newtonian energy \cite[Example~10.1]{FZ-Pot2}.

As shown in \cite[Theorem~6.1]{FZ-Pot1}, the quoted result on the solvability of Problem~\ref{pr2} nevertheless does hold if the class $\mathcal E_\alpha(\mathbf A,\mathbf1)$ of the admissible measures in (\ref{epr}) is replaced by a properly chosen class of $\mu\in\mathfrak M(\mathbf A,\mathbf1)$ with finite {\it weak\/} $\alpha$-Riesz energy $\dot{E}_\alpha(\mu)$, $\dot{E}_\alpha(\mu)$ being defined essentially (see \cite[Definition~4.1]{FZ-Pot1}) by
\begin{equation}\label{weak}{\dot E}_\alpha(\mu):=\int(\kappa_{\alpha/2}\mu)^2\,dm,\end{equation}
where $\kappa_{\alpha/2}\mu$ is the potential of $\mu$
relative to the $\alpha/2$-Riesz kernel $|x-y|^{\alpha/2-n}$, cf.\ (\ref{p-st}), and $m$ the $n$-dim\-ens\-ional
Lebesgue measure. (See Section~\ref{more} below for some further details about the concept of weak energy.)

Being thus motivated to investigate further condenser problems in the setting of weak Riesz energy, we shall now add to the analysis constraint and external source of energy, thereby obtaining a Gauss type problem, but with weak energy involved. Appearing as a natural generalization of the well-known (standard) constrained Gauss variational problem to measures with finite weak energy, this problem may also be of interest for mathematicians working with orthogonal polynomials and rational approximations.
See Sections~\ref{sec:cond} and \ref{sec-main} below for the strict formulation of the problem in question and the results obtained.

In what follows we shall tacitly use the notions and notation introduced above.

\section{Preliminaries}\label{sec:prel}

In this section we have compiled some basic facts of $\alpha$-Riesz and $\alpha$-Green potential theory that will be used throughout the paper.

When speaking of a positive Radon measure $\mu$ on $\mathbb R^n$, we shall always tacitly assume that its $\alpha$-Riesz {\it potential\/} $\kappa_\alpha\mu$ is not identically infinite:
\begin{equation}\label{p-st}\kappa_\alpha\mu:=\int\kappa_\alpha(\cdot,y)\,d\mu(y)\not\equiv\infty,\end{equation}
or equivalently \cite[Chapter~I, Section~3, n$^\circ$~7]{L}
\[\int_{|y|>1}\,\frac{d\mu(y)}{|y|^{n-\alpha}}<\infty.\]
Then (and only then) the potential $\kappa_\alpha\mu$ of any (signed) $\mu\in\mathfrak M(\mathbb R^n)$ is well-def\-ined and finite {\it nea\-rly everywhere\/} ({\it n.e.}) on $\mathbb R^n$, namely everywhere except for a set of zero inner $\alpha$-Riesz capacity $c_\alpha(\cdot)$; see \cite[Chap\-ter~III, Section~1]{L}.

A measure $\mu\in\mathfrak M(\mathbb R^n)$ is said to be {\it absolutely continuous\/} if $|\mu|(K)=0$ for every compact set $K\subset\mathbb R^n$ with $c_\alpha(K)=0$. Any $\mu\in\mathcal E_\alpha$ is certainly absolutely continuous; but not conversely \cite[pp.~134--135]{L}.

\subsection{Radon measures on $D$. $\alpha$-Riesz balayage}\label{sec-ext} Treating a given domain $D$ (see Section~\ref{sec:intr}) as a locally compact space, we consider the linear space $\mathfrak M(D)$ of all real-val\-ued Radon measures $\nu$ on $D$ equipped with the vague topology of pointwise convergence on the class $C_0(D)$ of all continuous  functions  with compact (in $D$) support, and the cone $\mathfrak M^+(D)$ of all positive $\nu\in\mathfrak M(D)$.

\begin{lemma}[{\rm see e.g.\ \cite[Section~1.1]{F1}}]\label{lemma-semi}If a lower semicontinuous\/ {\rm(}l.s.c.{\rm)} function\/ $\psi:D\to(-\infty,\infty]$ is either positive or of compact\/  {\rm(}in\/ $D${\rm)} support, then the mapping\/ $\nu\mapsto\int\psi\,d\nu$ is
vaguely l.s.c.\ on\/ $\mathfrak M^+(D)$.\end{lemma}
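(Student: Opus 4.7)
The plan is to exhibit $\nu \mapsto \int \psi \, d\nu$ as a pointwise supremum of vaguely continuous functionals on $\mathfrak M^+(D)$, and then invoke the elementary fact that a supremum of lower semicontinuous (in particular, continuous) functions is l.s.c.

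First I would dispose of the case where $\psi \geqslant 0$. The standard structure theorem for positive l.s.c. functions on a locally compact Hausdorff space (a direct consequence of Urysohn's lemma together with local compactness) represents $\psi$ as the pointwise supremum of an upward-directed family $\{\varphi_i\}_{i\in I}$ in $C_0(D)$ with $0 \leqslant \varphi_i \leqslant \psi$. By monotone convergence for positive Radon measures one then obtains
\[
\int \psi \, d\nu \;=\; \sup_{i \in I}\,\int \varphi_i \, d\nu \qquad\text{for every } \nu \in \mathfrak M^+(D).
\]
For each fixed $i$, the map $\nu \mapsto \int \varphi_i \, d\nu$ is vaguely continuous by the very definition of the vague topology; hence the displayed supremum is vaguely l.s.c.

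Next I would reduce the compact-support case to the positive case. Assume $\psi$ is l.s.c. on $D$ and supported in a compact set $K \subset D$; then $\psi \equiv 0$ off $K$ and, by lower semicontinuity on the compact $K$, $\psi$ attains its (finite) infimum there. Choose $\chi \in C_0(D)$ with $0 \leqslant \chi \leqslant 1$ and $\chi \equiv 1$ on $K$, and fix a constant $c > \max\bigl(0, -\inf_{K}\psi\bigr)$. Then $\psi + c\chi$ is l.s.c. and positive on all of $D$: on $K$ this holds since $\psi + c \geqslant \inf_K\psi + c > 0$, and off $K$ it reduces to $c\chi \geqslant 0$. By the first case, $\nu \mapsto \int (\psi + c\chi)\,d\nu$ is vaguely l.s.c.; subtracting from it the vaguely continuous (and everywhere finite) map $\nu \mapsto c\int\chi\,d\nu$ preserves lower semicontinuity and yields the claim for $\psi$.

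The only genuinely nontrivial input is the approximation of positive l.s.c. functions from below by elements of $C_0(D)$, a classical fact that can be quoted directly from standard references on Radon integration. The main care needed lies in the second step, in choosing $\chi$ so that $\psi + c\chi$ is nonnegative on all of $D$ rather than just on $K$; insisting that $\chi \equiv 1$ on the entire support $K$ of $\psi$ (and $\chi \geqslant 0$ elsewhere) secures this, and without it the reduction fails.
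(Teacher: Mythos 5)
Your proof is correct and follows the standard route the paper implicitly invokes by citing Fuglede: represent a nonnegative l.s.c.\ function as an upward-directed supremum of $C_0(D)$ functions, pass the supremum through the integral for Radon measures, and note that a supremum of vaguely continuous functionals is vaguely l.s.c.; then reduce the compact-support case by adding a large multiple of a $C_0$ cutoff that is $\equiv 1$ on the support. The one point worth being explicit about (and which you handle correctly) is that in the compact-support case $\int\psi\,d\nu$ is indeed well-defined in $(-\infty,\infty]$, since $\psi$ is bounded below on its compact support, so $\int\psi^-\,d\nu<\infty$; with that observed, the decomposition $\int\psi\,d\nu=\int(\psi+c\chi)\,d\nu-c\int\chi\,d\nu$ is legitimate and the argument closes.
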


We call $\nu\in\mathfrak M(D)$ {\it extendible\/} by $0$ outside $D$ if there is $\breve{\nu}\in\mathfrak M(\mathbb R^n)$ such that
\[\breve{\nu}^\pm(\varphi)=\int\varphi|_D\,d\nu^\pm\text{ \ for all\ }\varphi\in C_0(\mathbb R^n),\]
and we identify this $\nu$ with its extension $\breve{\nu}$. Any {\it bounded\/} $\nu\in\mathfrak M(D)$, i.e.\ with $|\nu|(D)<\infty$, is extendible; but not the other way around. Also note that the trace (restriction) $\mu|_D$ of any $\mu\in\mathfrak M(\mathbb R^n)$ on the (Borel) set $D$ is certainly extendible.

For any $Q\subset D$ we denote $\breve{\mathfrak M}(Q)$ the linear space of all extendible $\nu\in\mathfrak M(D)$ carried by $Q$,\footnote{If $Q$ is Borel, then $\breve{\mathfrak M}(Q)$ consists in fact of all the restrictions $\mu|_Q$, $\mu$ ranging over $\mathfrak M(\mathbb R^n)$.} and $\breve{\mathfrak M}^+(Q)$ the cone of all positive $\nu\in\breve{\mathfrak M}(Q)$. Write
\[\breve{\mathfrak M}^+(Q,1):=\bigl\{\nu\in\breve{\mathfrak M}^+(Q): \ \nu(Q)=1\bigr\}.\]

In view of these definitions the concepts of $\alpha$-Riesz potential theory can equally well be applied to measures $\nu\in\breve{\mathfrak M}(D)$ (or, to be exact, to their extensions $\breve{\nu}$). In particular, for any $\nu\in\breve{\mathfrak M}^+(D)$ there is a unique absolutely continuous measure $\nu'\in\mathfrak M^+(\mathbb R^n)$ carried by $F$ (${}=D^c$) with
\begin{equation}\label{bal-eq}\kappa_\alpha\nu'=\kappa_\alpha\nu\text{ \ n.e. on\ }F,\end{equation}
see \cite[Corollaries~3.19, 3.20]{FZ}; this $\nu'$ is said to be the $\alpha$-Riesz {\it balayage\/} of $\nu$ onto $F$. According to \cite[Theorem~3.17]{FZ}, the balayage $\nu'$ can be written in the form\footnote{In the literature
the integral representation (\ref{L-repr}) seems to have been more or less taken for granted,
though it has been pointed out in \cite[Chapter~V, Section~3, n$^\circ$~1]{B2} that it requires that the
family $(\varepsilon_y')_{y\in D}$ be $\mu${\it -ad\-equ\-ate\/} in the sense of
\cite[Chapter~V, Section~3, Definition~1]{B2}; see also counterexamples (without $\mu$-ad\-equ\-acy)
in Exercises~1 and~2 at the end of that section. A proof of this adequacy has therefore been given in
\cite[Lemma~3.16]{FZ}. However, {\it the question whether the integral representation holds for any positive Radon measure\/ $\mu$ on\/ $\mathbb R^n$ is still open}.\label{foot}}
\begin{equation}\label{L-repr}\nu'=\int\varepsilon_y'\,d\nu(y),\end{equation}
where $\varepsilon_y$ is the unit Dirac measure at $y\in D$. If moreover $E_\alpha(\nu)<\infty$, then the balayage $\nu'$ is in fact
the orthogonal projection of $\nu$ in the pre-Hilbert space $\mathcal E_\alpha$ onto the convex cone $\mathcal E^+_\alpha(F)$ of all $\mu\in\mathcal E_\alpha^+$ carried by $F$; that is, $\nu'\in\mathcal E^+_\alpha(F)$ and
\begin{equation}\label{proj}\|\nu-\nu'\|_\alpha<\|\nu-\eta\|_\alpha\text{ \ for all\ }
\eta\in\mathcal E^+_\alpha(F), \ \eta\ne\nu'.\end{equation}
The mapping $\nu\mapsto\nu'$, $\nu\in\breve{\mathfrak M}^+(D)$, is extended to signed $\nu\in\breve{\mathfrak M}(D)$ by linearity.

\subsection{$\alpha$-thinness at infinity} According to \cite[Theorem~3.11]{FZ},
\begin{equation}\label{m-est}\nu'(\mathbb R^n)\leqslant\nu(\mathbb R^n)\text{ \ for every\ }\nu\in\breve{\mathfrak M}^+(D).\end{equation}
Before specifying this estimate in Theorem~\ref{bal-mass-th}, we introduce the following notion.

\begin{definition}\label{def-th}We call a closed set $Q\subset\mathbb R^n$ {\it $\alpha$-thin
at infinity\/} if either $Q$ is compact, or $x=0$ is $\alpha$-ir\-reg\-ular for the inverse of $Q$ relative to
$\{x: |x|=1\}$, the $\alpha$-ir\-reg\-ul\-arity being defined e.g.\ by a Wiener type criterion \cite[Theorem~5.2]{L}.
\end{definition}

\begin{theorem}[{\rm see \cite[Theorems~8.6, 8.7]{Z-bal}}]\label{bal-mass-th} $D^c$ is\/
$\alpha$-thin at infinity if and only if there exists a nonzero\/ $\nu\in\breve{\mathfrak M}^+(D)$ with\/\footnote{This result has been announced earlier in \cite{Z1}. The proof provided in \cite{Z1} was however incomplete, being based on the integral representation (\ref{L-repr}); see footnote~\ref{foot} for more details.}
\begin{equation*}\label{m}\nu'(\mathbb R^n)<\nu(\mathbb R^n).\end{equation*}
\end{theorem}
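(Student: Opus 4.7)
The plan is to peel off two reductions and then invoke classical Riesz potential theory.

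First, using the integral representation (\ref{L-repr}), I note that for every $\nu\in\breve{\mathfrak M}^+(D)$ one has
\[\nu'(\mathbb R^n)=\int\varepsilon_y'(\mathbb R^n)\,d\nu(y),\]
which, combined with the pointwise bound $\varepsilon_y'(\mathbb R^n)\leqslant1$ from (\ref{m-est}) applied to Dirac masses, implies that a nonzero $\nu$ with $\nu'(\mathbb R^n)<\nu(\mathbb R^n)$ exists if and only if there is a single point $y_0\in D$ with $\varepsilon_{y_0}'(\mathbb R^n)<1$. (Sufficiency is realized by $\nu=\varepsilon_{y_0}$; necessity by choosing $y_0$ in the $\nu$-positive set on which the integrand drops strictly below $1$.) The theorem thus reduces to the equivalence
\[\exists\,y\in D\,:\ \varepsilon_y'(\mathbb R^n)<1\iff F\text{ is }\alpha\text{-thin at infinity.}\]

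Next, I would apply a Kelvin inversion centered at an auxiliary point inside $D$. Under this inversion the $\alpha$-Riesz kernel transforms multiplicatively by an explicit Jacobian, while $\varepsilon_y'$ and $F$ acquire images $\widetilde\varepsilon$ and $F^*$, with $F^*$ bounded and the image of $\infty$ appearing as a boundary point of the inverted domain. By Definition~\ref{def-th}, $F$ being $\alpha$-thin at infinity is precisely the $\alpha$-irregularity of that image point for $F^*$; and the Kelvin identity for Riesz potentials expresses the mass defect $1-\varepsilon_y'(\mathbb R^n)$ as a nonnegative quantity involving $\kappa_\alpha\widetilde\varepsilon$ evaluated at the image of $\infty$, which vanishes exactly when that point is $\alpha$-regular for $F^*$. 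This bijection delivers both directions of the reduced equivalence at once, and the Wiener criterion (\cite[Theorem~5.2]{L}) is available to articulate $\alpha$-regularity in terms of Riesz capacities of dyadic annuli, which transport under inversion to the standard annuli at infinity.

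The principal obstacle will be the Kelvin-transform bookkeeping for the $\alpha$-Riesz kernel when $\alpha<2$: one must track the factor $|x|^{\alpha-n}$ in the transformation of potentials, verify that the Kelvin image of an extendible positive Radon measure on $D$ is still a (possibly unbounded) positive Radon measure, and secure the $\mu$-adequacy condition discussed in footnote~\ref{foot} so that the integral representation (\ref{L-repr}) remains usable after inversion. Since the statement is quoted from \cite[Theorems~8.6,~8.7]{Z-bal}, my plan is to lean on the Kelvin apparatus developed there, with the mass-defect identity for Dirac measures as the crucial lemma.
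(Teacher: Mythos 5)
The paper does not prove Theorem~\ref{bal-mass-th}: it is quoted from \cite[Theorems~8.6,~8.7]{Z-bal}, and the footnote attached to the theorem explicitly explains that the older argument in \cite{Z1} was incomplete and points to \cite{Z-bal} for the full proof. So there is no in-paper argument for your proposal to be measured against.

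Taken on its own terms, the opening reduction is correct and worth noting: since (\ref{L-repr}) has been legitimized for $\nu\in\breve{\mathfrak M}^+(D)$ via \cite[Theorem~3.17]{FZ} (the $\mu$-adequacy being the content of \cite[Lemma~3.16]{FZ}, as recalled in footnote~\ref{foot}), monotone convergence applied to $\varphi\uparrow 1$ gives $\nu'(\mathbb R^n)=\int\varepsilon_y'(\mathbb R^n)\,d\nu(y)$, and with $\varepsilon_y'(\mathbb R^n)\leqslant 1$ from (\ref{m-est}) the existence of a $\nu$ with strict mass defect does reduce to the existence of a single $y_0\in D$ with $\varepsilon_{y_0}'(\mathbb R^n)<1$. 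Your use of (\ref{L-repr}) is thus fine in the extendible case the theorem is actually about; the open question recorded in footnote~\ref{foot} concerns arbitrary positive Radon measures, which is not what is needed here.

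The Kelvin step, however, is announced rather than carried out, and it conceals both halves of the real work. Two specific gaps: (i) Definition~\ref{def-th} declares $\alpha$-thinness at infinity via inversion in the unit sphere centered at the origin, whereas you invert at an auxiliary $z_0\in D$; the independence of the notion from the choice of center is true, but it is not built into the definition and must be argued, say from the Wiener series of Remark~\ref{rem-thin} by observing that the dyadic annuli $\{q^k\leqslant|x|<q^{k+1}\}$ and their translates by $z_0$ are commensurable for large $k$. (ii) The asserted ``mass-defect identity'' gives, after the Kelvin bookkeeping, $1-\varepsilon_y'(\mathbb R^n)=\kappa_\alpha(\varepsilon_y)^*(z_0)-\kappa_\alpha(\varepsilon_y')^*(z_0)$, which is nonnegative. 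That it \emph{vanishes} when $z_0$ is $\alpha$-regular for the inverted plate is the easy direction (balayage potentials agree at regular points). That $\alpha$-\emph{irregularity} of $z_0$ forces a \emph{strictly positive} defect for some (equivalently, every) Dirac mass in $D$ is exactly the nontrivial implication of the theorem, and your sketch offers no reason for it: a priori irregularity only allows the balayage potential to drop at $z_0$, it does not make it drop. You yourself close by saying you would ``lean on the Kelvin apparatus developed there'' with the mass-defect identity ``as the crucial lemma'' — which is to say the crucial lemma is borrowed, not proved. As a roadmap the proposal is sensible and the reduction to Dirac masses is a clean contribution, but the hard half of the equivalence is deferred to the cited reference rather than supplied.
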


\begin{remark}\label{rem-thin}If a closed set $Q\subset\mathbb R^n$ is not $\alpha$-thin at infinity, then $c_\alpha(Q)=\infty$. Indeed, by the Wiener criterion, $Q$ is not $\alpha$-thin at infinity
if and only if
\[\sum_{k\in\mathbb N}\,\frac{c_\alpha(Q_k)}{q^{k(n-\alpha)}}=\infty,\]
where $q>1$ and $Q_k:=Q\cap\bigl\{x\in\mathbb R^n: q^k\leqslant|x|<q^{k+1}\bigr\}$, while by
\cite[Lemma~5.5]{L} $c_\alpha(Q)<\infty$ is equivalent to the relation
\[\sum_{k\in\mathbb N}\,c_\alpha(Q_k)<\infty.\]
These observations also imply that the converse is not true, i.e.\ there is $Q$ with $c_\alpha(Q)=\infty$,
but $\alpha$-thin at infinity.\end{remark}

\begin{figure}[htbp]
\begin{center}
\vspace{-.4in}
\hspace{1in}\includegraphics[width=4in]{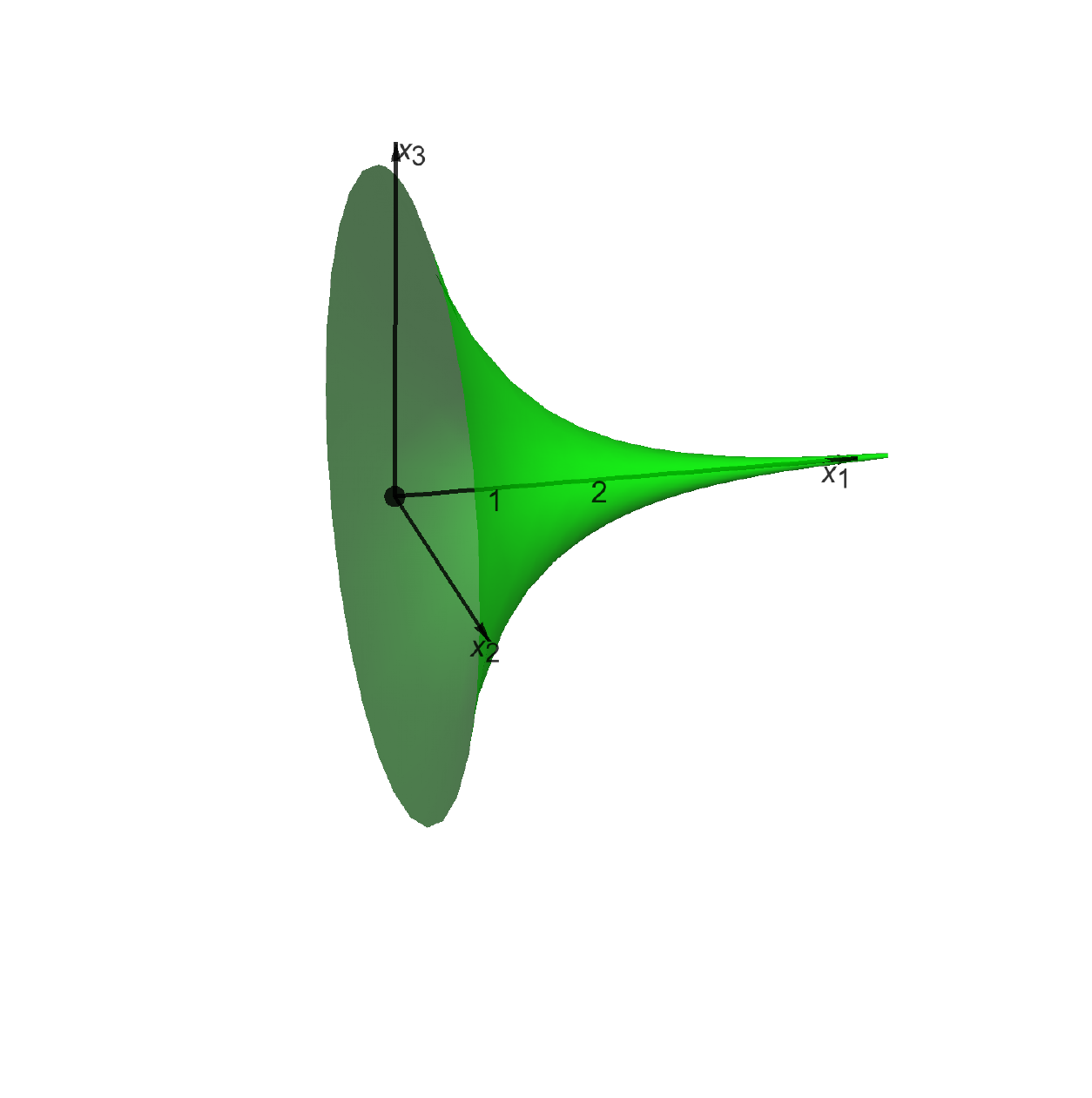}
\vspace{-.8in}
\caption{The set $Q_\varrho$ in Examples~\ref{ex}, \ref{rem-ex} with $\varrho(x_1)=\exp(-x_1)$.\vspace{-.1in}}
\label{Fig1}
\end{center}
\end{figure}

\begin{example}\label{ex} Let $n=3$ and $\alpha=2$. Define the rotation body
\begin{equation}\label{descr}Q_\varrho:=\bigl\{x\in\mathbb R^3: \ 0\leqslant x_1<\infty, \
x_2^2+x_3^2\leqslant\varrho^2(x_1)\bigr\},\end{equation}
where $\varrho$ is given by one of the following three formulae:
\begin{align}
\label{c1}\varrho(x_1)&=x_1^{-s}\text{ \ with\ }s\in[0,\infty),\\
\label{c2}\varrho(x_1)&=\exp(-x_1^s)\text{ \ with\ }s\in(0,1],\\
\label{c3}\varrho(x_1)&=\exp(-x_1^s)\text{ \ with\ }s\in(1,\infty).
\end{align}
Then $Q_\varrho$ is not $2$-thin at infinity if $\varrho$ is defined by (\ref{c1}),
$Q_\varrho$ is $2$-thin at infinity but $c_2(Q_\varrho)=\infty$ if $\varrho$ is given
by (\ref{c2}) (see Figure~\ref{Fig1}),
and $c_2(Q_\varrho)<\infty$ if (\ref{c3}) holds.
\end{example}

\begin{remark}\label{rem-comp}For $\alpha=2$, the concept of $\alpha$-thinness at infinity thus defined is, in fact, equivalent to that by Doob \cite[pp.~175--176]{Doob}, while for $\alpha\ne2$, it seems to appear first in our earlier work \cite{Z1}. Due to its deep relation with balayage, observed in Theorem~\ref{bal-mass-th}, it plays an important role in the investigation of condenser problems in Riesz potential theory (see e.g.\ \cite{DFHSZ2,FZ-Pot1,FZ-Pot2}; for an illustration, see Example~\ref{rem-ex} below). Note that for $\alpha=2$, a different concept of $\alpha$-thin\-ness at infinity has been introduced by Brelot \cite[p.~313]{Brelot}, which is actually more restrictive than Doob's (equivalently, our) concept. Indeed, a closed set $Q\subset\mathbb R^n$ is $2$-thin at infinity by Brelot if and only if $c_2(Q)<\infty$ (see \cite[p.~277, footnote]{Ca2} or \cite[Chapter~IX, Section~6]{Brelo2}); while according to Remark~\ref{rem-thin}, $c_\alpha(Q)<\infty$ is only sufficient, but not necessary for $Q$ to be $\alpha$-thin at infinity in the sense of our Definition~\ref{def-th}.\end{remark}

Throughout the rest of the paper, when speaking of $\alpha$-thin\-ness at infinity, we shall tacitly follow Definition~\ref{def-th}. Then a closed set $Q\subset\mathbb R^n$ is, in fact, $\alpha$-thin at infinity if and only if there is an $\alpha$-Riesz equilibrium measure on $Q$, treated in an extended sense where infinite energy is allowed (see \cite[Section~5]{Z-bal}).

\subsection{$\alpha$-Green kernel}\label{sec-gr}Write
\begin{equation}\label{gr}g(x,y):=g^\alpha_D(x,y):=\kappa_\alpha\varepsilon_y(x)-\kappa_\alpha\varepsilon_y'(x)\text{ \ for all\ }x,y\in D.\end{equation}
Being positive and l.s.c.\ on $D\times D$ (see e.g.\ \cite{DFHSZ}), the function $g$ can serve as a kernel of potential theory on the locally compact space $D$.
We define the energy $E_g(\nu)$ and the potential $g\nu$ of $\nu\in\mathfrak M(D)$ relative to the {\it $\alpha$-Green kernel\/} $g$ by replacing  $\kappa_\alpha$ by $g$ in (\ref{en-st}) and (\ref{p-st}), respectively. As the kernel $g$ is strictly positive definite \cite[Theorem~4.9]{FZ} (for $\alpha=2$, see also \cite[Chapter~XIII, Section~7]{Doob}), all $\nu\in\mathfrak M(D)$ with $E_g(\nu)<\infty$ form a pre-Hil\-bert space $\mathcal E_g=\mathcal E_g(D)$ with the inner product
\[\langle\mu,\nu\rangle_g:=E_g(\mu,\nu):=\int g\mu\,d\nu\text{ \ for all\ }\mu,\nu\in\mathcal E_g.\]
The topology on $\mathcal E_g$ defined by the norm $\|\nu\|_g:=\sqrt{E_g(\nu)}$ is said to be {\it strong}.

Applying \cite[Lemma~2.3.1]{F1} shows that any Borel set $B\subset D$ with zero inner $\alpha$-Green capacity $c_g(\cdot)$ {\it cannot carry\/} any $\nu\in\mathcal E_g$, i.e.\ $|\nu|(B)=0$. More generally, this holds for any $\nu\in\mathfrak M(D)$ with $\nu|_K\in\mathcal E_g$ for every compact $K\subset D$. Also note that for any $Q\subset D$, $c_g(Q)=0\iff c_\alpha(Q)=0$, see e.g.\ \cite[Lemma~2.6]{DFHSZ}.

Fix $\nu\in\breve{\mathfrak M}(D)$. Integrating (\ref{gr}) with respect to $\nu$ and applying (\ref{L-repr}) to $\nu^\pm$, we obtain by \cite[Chapter~V, Section~3, Proposition~1]{B2}
\begin{equation}\label{3.4}g\nu=\kappa_\alpha\nu-\kappa_\alpha\nu'\text{ \ n.e.\ on\ }D,\end{equation}
both $\kappa_\alpha\nu$ and $\kappa_\alpha\nu'$ being finite n.e.\ on $\mathbb R^n$ in consequence of our general agreement. As for relations between $\alpha$-Green, standard $\alpha$-Riesz, and weak $\alpha$-Riesz energies, first note that $E_\alpha(\nu)<\infty$ implies $E_g(\nu)<\infty$. For positive $\nu$, this is obvious since
\[g(x,y)<|x-y|^{\alpha-n}\text{ \ for all\ }x,y\in D,\]
while for signed $\nu$, this follows from the definition of finite (standard) energy.

\begin{lemma}\label{eq-r-g}Assume that\/ $\nu\in\breve{\mathfrak M}(D)$ is bounded and
\begin{equation}\label{dist'}\inf_{x\in S_D^\nu, \ z\in F}\,|x-z|>0,\end{equation}
$S_D^\nu$ being the support of\/ $\nu$ in\/ $D$. Then\/ $E_g(\nu)$ is finite\/ {\rm(}if and\/{\rm)} only if\/ $E_\alpha(\nu)$ is so.
\end{lemma}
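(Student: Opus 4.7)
The forward implication $E_\alpha(\nu)<\infty\Rightarrow E_g(\nu)<\infty$ is already noted in the text (via the pointwise bound $g\le\kappa_\alpha$ on $D\times D$ applied to $|\nu|$, with Cauchy--Schwarz handling the mutual energy in the signed case). So the real content is the converse, and the plan is to deduce finite $\alpha$-Riesz energy by integrating the identity (\ref{3.4}) against $\nu^{\pm}$ and using the separation condition (\ref{dist'}) to control the balayage contribution.

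Assume $E_g(\nu)<\infty$. By the same convention as for $E_\alpha$, this forces $E_g(\nu^+)$ and $E_g(\nu^-)$ to be finite, so $\nu^{\pm}\in\mathcal E_g$. As noted in Section~\ref{sec-gr}, any $\mu\in\mathcal E_g$ charges no Borel set of zero inner $\alpha$-Green capacity, and the classes of $c_g$- and $c_\alpha$-null sets in $D$ coincide; hence $\nu^{\pm}$ charges no set of zero inner $\alpha$-Riesz capacity. Apply (\ref{3.4}) to $\nu^{\pm}\in\breve{\mathfrak M}^+(D)$ to obtain
\[\kappa_\alpha\nu^{\pm}=g\nu^{\pm}+\kappa_\alpha(\nu^{\pm})'\text{ \ n.e.\ on\ }D,\]
all three potentials being positive and l.s.c.\ on $D$. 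Since the exceptional set is $c_\alpha$-null and $\nu^{\pm}$ does not charge it, integration against $\nu^{\pm}$ preserves the identity, giving
\[E_\alpha(\nu^{\pm})=E_g(\nu^{\pm})+\int\kappa_\alpha(\nu^{\pm})'\,d\nu^{\pm}.\]

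It remains to show that the last integral is finite. Here (\ref{dist'}) and the boundedness of $\nu$ do all the work. The measure $\nu^{\pm}$ is carried by $S_D^\nu$, its balayage $(\nu^{\pm})'$ is carried by $F$, and $d:=\mathrm{dist}(S_D^\nu,F)>0$. For $(x,z)\in S_D^\nu\times F$ we have $|x-z|\geqslant d$, so $|x-z|^{\alpha-n}\leqslant d^{\alpha-n}$ (recall $\alpha<n$). Together with (\ref{m-est}) this yields
\[\int\kappa_\alpha(\nu^{\pm})'\,d\nu^{\pm}\leqslant d^{\alpha-n}\,(\nu^{\pm})'(\mathbb R^n)\,\nu^{\pm}(\mathbb R^n)\leqslant d^{\alpha-n}\,\nu^{\pm}(\mathbb R^n)^2<\infty.\]
Hence $E_\alpha(\nu^{\pm})<\infty$, and by Cauchy--Schwarz in $\mathcal E_\alpha$ the mutual energy $E_\alpha(\nu^+,\nu^-)$ is finite as well, so $E_\alpha(|\nu|)<\infty$, i.e.\ $E_\alpha(\nu)<\infty$.

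The only delicate step is the integration of the n.e.\ identity (\ref{3.4}) against the signed components, and the sole reason it succeeds is the chain $E_g(\nu^{\pm})<\infty\Rightarrow\nu^{\pm}\in\mathcal E_g\Rightarrow\nu^{\pm}$ charges no $c_g$-null (equivalently $c_\alpha$-null) set. The remaining estimates are routine once the distance condition (\ref{dist'}) is used to bound $\kappa_\alpha(\nu^{\pm})'$ uniformly on $S_D^\nu$.
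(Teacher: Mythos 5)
Your proof is correct and rests on the same two ingredients as the paper's: the Green kernel decomposition of $\kappa_\alpha$, and the uniform bound on the Riesz potential of a balayage onto $F$ that (\ref{dist'}) together with the mass estimate (\ref{m-est}) provides. The paper's version is slightly more economical, though. It works at the kernel level, using (\ref{gr}) directly: $\kappa_\alpha(x,y)=g(x,y)+\kappa_\alpha\varepsilon_y'(x)$ for \emph{all} $x,y\in D$, with $\kappa_\alpha\varepsilon_y'(x)\leqslant d^{\alpha-n}=:C$ whenever $x,y\in S_D^\nu$ (here $d:=\mathrm{dist}(S_D^\nu,F)$). Since this is a genuine pointwise inequality on $S_D^\nu\times S_D^\nu$, integrating against $\nu\otimes\nu$ after reducing to $\nu\geqslant0$ immediately gives $E_\alpha(\nu)\leqslant E_g(\nu)+C\nu(D)^2$, with no measure-theoretic subtleties. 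You instead invoke the potential-level identity (\ref{3.4}), which holds only n.e.\ on $D$, and therefore need the additional observation that $\nu^\pm\in\mathcal E_g^+$ charges no $c_g$-null, hence no $c_\alpha$-null, set before the identity can be integrated against $\nu^\pm$. That step is correct, but working with the kernel identity (\ref{gr}) rather than the potential identity (\ref{3.4}) makes it unnecessary. The closing estimate is the same in substance in both proofs: you apply the distance bound and (\ref{m-est}) to $(\nu^\pm)'$, the paper applies them to $\varepsilon_y'$.
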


\begin{proof} It is enough to verify this for $\nu\geqslant0$. In view of (\ref{m-est}), we
get from (\ref{dist'})
\[\kappa_\alpha\varepsilon_y'(x)=\int|x-z|^{\alpha-n}\,d\varepsilon_y'(z)\leqslant C\text{ \ for all\ }x,y\in S_D^\nu,\]
where a constant $C\in(0,\infty)$ is independent of $x,y$. Hence
\[\kappa_\alpha(x,y)=g(x,y)+\kappa_\alpha\varepsilon_y'(x)\leqslant g(x,y)+C\text{ \ for all\ }x,y\in S_D^\nu,\]
and therefore
\begin{equation}\label{10}E_\alpha(\nu)\leqslant E_g(\nu)+C\nu(D)^2.\end{equation}
Thus $E_g(\nu)<\infty$ implies, indeed, $E_\alpha(\nu)<\infty$.
\end{proof}

\begin{remark}\label{2.6}If assumption (\ref{dist'}) is dropped, then Lemma~\ref{eq-r-g} fails in general. See \cite[Example~10.1]{DFHSZ2} for an example of a domain $D$ in $\mathbb R^3$ and a bounded $\nu_0\in\mathfrak M^+(D)$ with $E_{g^2_D}(\nu_0)<\infty$, but $E_2(\nu_0)=\infty$. Compare with Theorem~\ref{thm} below.\end{remark}

\begin{theorem}[{\rm see \cite[Theorem~5.1]{FZ-Pot1}}]\label{thm} For any\/ $\nu\in\breve{\mathfrak M}(D)$ with\/ $E_g(\nu)<\infty$, $\nu-\nu'$ has finite
weak\/ $\alpha$-Riesz energy\/ $\dot{E}_\alpha(\nu-\nu')$, defined by\/ {\rm(\ref{weak})}, and moreover
\begin{equation}\label{2.8} E_g(\nu)=\dot{E}_\alpha(\nu-\nu').
\end{equation}
\end{theorem}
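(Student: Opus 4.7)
My plan is to combine two ingredients. First, for any $\mu\in\mathcal E_\alpha$ the Riesz composition formula $\kappa_{\alpha/2}*\kappa_{\alpha/2}=\kappa_\alpha$ (in the normalization implicit in (\ref{weak})) gives, via Fubini, $\dot E_\alpha(\mu)=E_\alpha(\mu)$. Second, for any $\nu\in\breve{\mathfrak M}^+(D)$ with $E_\alpha(\nu)<\infty$, the characterization (\ref{proj}) of $\nu'$ as orthogonal projection of $\nu$ onto the convex cone $\mathcal E_\alpha^+(F)$, combined with (\ref{3.4}), (\ref{bal-eq}), and the fact that $\nu,\nu'\in\mathcal E_\alpha$ ignore sets of zero inner $\alpha$-capacity, will yield
\[
E_g(\nu)=\int(\kappa_\alpha\nu-\kappa_\alpha\nu')\,d\nu=E_\alpha(\nu)-E_\alpha(\nu',\nu)=E_\alpha(\nu)-\|\nu'\|_\alpha^2=\|\nu-\nu'\|_\alpha^2,
\]
the penultimate equality following from $\kappa_\alpha\nu=\kappa_\alpha\nu'$ n.e.\ on $F$ together with $\nu'$ being carried by $F$. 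These two observations prove the theorem whenever $E_\alpha(\nu)<\infty$.

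For a general $\nu\in\breve{\mathfrak M}^+(D)$ with only $E_g(\nu)<\infty$, I would truncate by $\nu_j:=\nu|_{K_j}$ along an exhaustion $K_j\uparrow D$ by compact sets. Each $\nu_j$ is bounded and its $D$-support lies at positive distance from $F$, so Lemma~\ref{eq-r-g} forces $E_\alpha(\nu_j)<\infty$ and the special case above gives $E_g(\nu_j)=\dot E_\alpha(\nu_j-\nu_j')$. Monotone convergence applied to $g\nu$ yields $\|\nu_j\|_g\to\|\nu\|_g$ and $\langle\nu_j,\nu\rangle_g\to\|\nu\|_g^2$, hence $\nu_j\to\nu$ strongly in $\mathcal E_g$. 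Applying the special case to the bounded positive measures $\nu_k-\nu_j$ ($k\geqslant j$) and using linearity of balayage will give
\[
\int\bigl(\kappa_{\alpha/2}(\nu_k-\nu_k')-\kappa_{\alpha/2}(\nu_j-\nu_j')\bigr)^2\,dm=E_g(\nu_k-\nu_j)\to 0,
\]
so $\{\kappa_{\alpha/2}(\nu_j-\nu_j')\}$ is Cauchy in $L^2(m)$.

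It then remains to identify its $L^2$-limit. Since $\nu_j\uparrow\nu$ and, by the integral representation (\ref{L-repr}), also $\nu_j'\uparrow\nu'$, monotone convergence for potentials delivers $\kappa_{\alpha/2}\nu_j\uparrow\kappa_{\alpha/2}\nu$ and $\kappa_{\alpha/2}\nu_j'\uparrow\kappa_{\alpha/2}\nu'$ pointwise on $\mathbb R^n$, both limits being finite n.e.\ by the standing convention (\ref{p-st}) applied to the order $\alpha/2$. Thus $\kappa_{\alpha/2}(\nu_j-\nu_j')\to\kappa_{\alpha/2}(\nu-\nu')$ n.e.\ on $\mathbb R^n$; extracting an $m$-a.e.\ convergent subsequence identifies the $L^2$-limit with $\kappa_{\alpha/2}(\nu-\nu')$, and therefore $\dot E_\alpha(\nu-\nu')=\lim_j E_g(\nu_j)=E_g(\nu)<\infty$. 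The signed case will follow by bilinearity from the Hahn--Jordan decomposition of $\nu$. The main technical obstacle is precisely this final identification: reconciling the $m$-a.e.\ limit supplied by $L^2$-convergence with the n.e.\ pointwise limit of a difference of two monotonically increasing, but possibly singular, potentials---a step that is unavoidable because $\nu-\nu'$ need not lie in $\mathcal E_\alpha$, which is exactly what makes the weak energy framework indispensable here.
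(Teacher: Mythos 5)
The paper does not give a proof of this theorem — it is quoted from \cite[Theorem~5.1]{FZ-Pot1} — so there is no internal argument against which to compare yours, and it has to stand on its own.

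It does. Your special case ($E_\alpha(\nu)<\infty$) is precisely Corollary~\ref{cor}, which the paper derives \emph{from} the theorem rather than using to prove it; you establish it directly from (\ref{3.4}), (\ref{bal-eq}), the absolute continuity of $\nu$ and $\nu'$, and the Riesz composition identity, with the computation $E_\alpha(\nu',\nu)=E_\alpha(\nu')$ (via $\kappa_\alpha\nu=\kappa_\alpha\nu'$ n.e.\ on $F$ and $\nu'$ carried by $F$) as the key step. The truncation $\nu_j:=\nu|_{K_j}$ then works as you describe: $E_g(\nu_j)\leqslant E_g(\nu)<\infty$ and $S_D^{\nu_j}$ is compact, so Lemma~\ref{eq-r-g} gives $E_\alpha(\nu_j)<\infty$; monotone convergence yields strong $\mathcal E_g$-convergence $\nu_j\to\nu$; and applying the special case to $\nu_k-\nu_j\geqslant0$ (using additivity of balayage on $\mathfrak M^+$) yields $L^2(m)$-Cauchyness of $\kappa_{\alpha/2}(\nu_j-\nu_j')$.

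Two points in the limit identification, which you rightly flag as the delicate step, deserve a line of justification. First, the ``standing convention applied to order $\alpha/2$'': $\int_{|y|>1}|y|^{\alpha-n}\,d\nu(y)<\infty$ a fortiori gives $\int_{|y|>1}|y|^{\alpha/2-n}\,d\nu(y)<\infty$, and $\kappa_\alpha\nu'\leqslant\kappa_\alpha\nu$ everywhere (domination principle) passes this to $\nu'$; hence $\kappa_{\alpha/2}\nu$ and $\kappa_{\alpha/2}\nu'$ are indeed finite n.e. Second, the claim $\nu_j'\uparrow\nu'$, which then gives $\kappa_{\alpha/2}\nu_j'\uparrow\kappa_{\alpha/2}\nu'$ by Tonelli, rests exactly on the integral representation (\ref{L-repr}) and the adequacy of $(\varepsilon_y')_{y\in D}$ — the very subtlety footnote~\ref{foot} warns about — and your explicit appeal to (\ref{L-repr}) is the right move. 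Since a set of zero inner $\alpha$-capacity is $m$-negligible, the n.e.\ pointwise limit and the $m$-a.e.\ subsequential $L^2$-limit agree, which closes the positive case; the signed case follows by applying this to $\nu^+$, $\nu^-$ and $\nu^++\nu^-$ and polarizing. As far as I can tell, the proof is correct.
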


\begin{corollary}\label{cor}For any\/ $\nu\in\breve{\mathfrak M}(D)$ with\/ $E_\alpha(\nu)<\infty$,
\[E_g(\nu)=E_\alpha(\nu-\nu').\]
\end{corollary}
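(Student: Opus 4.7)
The plan is to chain the preceding theorem with the basic fact that, on $\mathcal E_\alpha$, weak and standard Riesz energies coincide. Concretely, fix $\nu\in\breve{\mathfrak M}(D)$ with $E_\alpha(\nu)<\infty$. The paper has already noted, immediately before Lemma~\ref{eq-r-g}, that $E_\alpha(\nu)<\infty$ implies $E_g(\nu)<\infty$; hence Theorem~\ref{thm} applies and gives
\[E_g(\nu)=\dot{E}_\alpha(\nu-\nu').\]
So the corollary is reduced to showing $\dot{E}_\alpha(\nu-\nu')=E_\alpha(\nu-\nu')$, which amounts to verifying that $\nu-\nu'\in\mathcal E_\alpha$ and that on $\mathcal E_\alpha$ the weak and standard energies agree.

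For the first point I would argue as follows. From $E_\alpha(\nu)<\infty$ we have $\nu^{\pm}\in\mathcal E_\alpha^+$, so by the projection characterization of balayage recalled around (\ref{proj}), the measures $(\nu^+)'$ and $(\nu^-)'$ lie in $\mathcal E_\alpha^+(F)$. By linearity of the balayage map on $\breve{\mathfrak M}(D)$, $\nu'=(\nu^+)'-(\nu^-)'\in\mathcal E_\alpha$, and hence $\nu-\nu'\in\mathcal E_\alpha$, i.e.\ $E_\alpha(\nu-\nu')<\infty$.

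For the second point, I would invoke the composition (or reproducing) property of Riesz kernels,
\[\int\kappa_{\alpha/2}(x,z)\,\kappa_{\alpha/2}(z,y)\,dm(z)=C_{n,\alpha}\,\kappa_\alpha(x,y),\]
which (after the usual normalization adopted in \cite{FZ-Pot1}) yields by Fubini, for any $\sigma\in\mathcal E_\alpha$,
\[E_\alpha(\sigma)=\int\kappa_\alpha(x,y)\,d(\sigma\otimes\sigma)(x,y)=\int(\kappa_{\alpha/2}\sigma)^2\,dm=\dot{E}_\alpha(\sigma).\]
This identity is precisely the defining consistency of weak energy with standard energy on $\mathcal E_\alpha$ recorded in \cite[Section~4]{FZ-Pot1}, so it can simply be cited. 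Applying it with $\sigma:=\nu-\nu'$ gives $\dot{E}_\alpha(\nu-\nu')=E_\alpha(\nu-\nu')$, and combining this with the display from Theorem~\ref{thm} completes the proof.

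The only genuine obstacle is making sure the Fubini step is legitimate for signed $\sigma$, but this is harmless: $E_\alpha(|\sigma|)<\infty$ since $|\sigma|\leqslant|\nu|+|\nu'|$ and both summands have finite energy, so $\kappa_{\alpha/2}|\sigma|\in L^2(m)$, which justifies splitting into positive and negative parts and applying Fubini. Everything else is bookkeeping from the preceding results.
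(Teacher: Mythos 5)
Your proof is correct and follows the same route as the paper: reduce to Theorem~\ref{thm} (i.e.\ (\ref{2.8})) to get $E_g(\nu)=\dot{E}_\alpha(\nu-\nu')$, then use the agreement of weak and standard energy on $\mathcal E_\alpha$ (i.e.\ (\ref{dot0})) to replace $\dot{E}_\alpha(\nu-\nu')$ by $E_\alpha(\nu-\nu')$. You fill in the small point the paper leaves implicit, namely that $\nu-\nu'\in\mathcal E_\alpha$, via (\ref{proj}) and linearity of balayage, which is a correct and reasonable amount of extra detail.
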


\begin{proof}Indeed, then both $E_g(\nu)$ and $E_\alpha(\nu-\nu')$ are finite, hence
\[E_g(\nu)=\dot{E}_\alpha(\nu-\nu')=E_\alpha(\nu-\nu'),\]
the former equality being valid by (\ref{2.8}) and the latter by (\ref{dot0}) (see below).\end{proof}

Crucial to our study is the {\it perfectness\/} of the kernel $g$, established recently in \cite[Theorem~4.11]{FZ}, which amounts to the completeness of the cone $\mathcal E_g^+$ of all positive $\nu\in\mathcal E_g$ in the (induced) strong topology. In more detail, every strong Cauchy sequence (net) in $\mathcal E_g^+$ converges strongly to any of its vague cluster points, and the strong topology on $\mathcal E_g^+$ is stronger than the (induced) vague topology on $\mathcal E_g^+$. Combining this with \cite[Theorem~4.1]{F1} results in the following fundamental fact.

\begin{theorem}\label{th-equi} For any\/ $Q\subset D$ with\/ $c_g(Q)<\infty$, there is a unique\/ $\gamma=\gamma_Q\in\mathcal E^+_g$ with the properties\/ $S_D^\gamma\subset\text{\rm Cl}_DQ$ and\/\footnote{The kernel $g$ satisfies the complete maximum principle in the form stated in \cite[Theorem~4.6]{FZ}, hence the Frostman maximum principle. Therefore, (\ref{eq2}) implies $g\gamma_Q\leqslant1$ on $D$.}
\begin{align}\label{eq1}&E_g(\gamma)=\gamma(D)=c_g(Q),\\
\label{eq2}&g\gamma=1\text{ \ n.e.\ on\ }Q.
\end{align}
This\/ $\gamma$, termed the\/ $g$-equilibrium measure for\/ $Q$, solves the problem of minimizing\/ $E_g(\nu)$ over the class\/ $\Gamma_Q$ of all\/ {\rm(}signed\/{\rm)} $\nu\in\mathcal E_g$ with\/ $g\nu\geqslant1$ n.e.\ on\/ $Q$, that is,
\[c_g(Q)=\|\gamma\|^2_g=\min_{\nu\in\Gamma_Q}\,\|\nu\|^2_g.\]
\end{theorem}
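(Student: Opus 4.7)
The plan is to obtain Theorem~\ref{th-equi} by a Fuglede-type equilibrium argument applied to the $\alpha$-Green kernel, using three facts already in place above: $g$ is strictly positive definite (so $\mathcal E_g$ is a pre-Hilbert space), $g$ is perfect (Theorem~4.11 of \cite{FZ}, i.e.\ $\mathcal E_g^+$ is strongly complete), and $g$ satisfies Frostman's maximum principle (the footnote to the theorem).

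I would first produce $\gamma$ by minimizing $\|\cdot\|_g^2$ over the convex positive subclass $\Gamma_Q^+:=\Gamma_Q\cap\mathcal E_g^+$, whose infimum equals $c_g(Q)$ by the standard definition of inner $g$-capacity. For a minimizing sequence $(\nu_k)\subset\Gamma_Q^+$, convexity of $\Gamma_Q^+$ (from linearity of $\nu\mapsto g\nu$) together with the parallelogram identity in $\mathcal E_g$ yields
\[
\|\nu_j-\nu_k\|_g^2=2\|\nu_j\|_g^2+2\|\nu_k\|_g^2-4\bigl\|\tfrac12(\nu_j+\nu_k)\bigr\|_g^2\to 0,
\]
so $(\nu_k)$ is strongly Cauchy; by perfectness of $g$ it converges vaguely and strongly to some $\gamma\in\mathcal E_g^+$ with $\|\gamma\|_g^2=c_g(Q)$. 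Strong convergence entails convergence of potentials n.e.\ along a subsequence, hence $g\gamma\geqslant1$ n.e.\ on $Q$ and $\gamma\in\Gamma_Q^+$. The inclusion $S_D^\gamma\subset\mathrm{Cl}_D Q$ follows from the standard observation that any mass placed outside $\mathrm{Cl}_D Q$ could be removed without breaking the n.e.\ constraint on $Q$, contradicting minimality. Combining Frostman's $g\gamma\leqslant1$ with $\int g\gamma\,d\gamma=\|\gamma\|_g^2$ forces $g\gamma=1$ $\gamma$-a.e., while minimality together with $g\gamma\geqslant1$ n.e.\ on $Q$ gives the reverse equality $g\gamma=1$ n.e.\ on $Q$; integration against $\gamma$ then yields $\gamma(D)=c_g(Q)$, so (\ref{eq1}) and (\ref{eq2}) hold. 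Uniqueness is immediate from strict convexity of $\|\cdot\|_g^2$.

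To extend the variational characterization to signed $\nu\in\Gamma_Q$ I would expand
\[
0\leqslant\|\nu-\gamma\|_g^2=\|\nu\|_g^2-2\langle\nu,\gamma\rangle_g+\|\gamma\|_g^2,
\]
reducing the claim to $\langle\nu,\gamma\rangle_g\geqslant\|\gamma\|_g^2$. Writing $\langle\nu,\gamma\rangle_g=\int g\nu\,d\gamma$ by Fubini (valid since $\nu,\gamma\in\mathcal E_g$) and using the absolute continuity of $\gamma$ with respect to $c_g$, one lower-bounds this integral by $\gamma(D)=\|\gamma\|_g^2$, completing the proof.

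The main obstacle I expect is the last step: the inequality $g\nu\geqslant1$ is only guaranteed n.e.\ on $Q$, whereas $\gamma$ may charge $\mathrm{Cl}_D Q\setminus Q$, so one cannot directly integrate the pointwise constraint against $\gamma$. To propagate the inequality to $\gamma$-a.e.\ on $S_D^\gamma$ I would invoke quasi-continuity of potentials of $\mathcal E_g$-measures, so that $\{g\nu\geqslant1\}$ is $c_g$-quasi-closed and contains the $c_g$-fine closure of $Q$ up to a $c_g$-null set which $\gamma$ does not charge. An alternative route is the dual representation $\langle\nu,\gamma\rangle_g=\int g\gamma\,d\nu$ combined with $g\gamma\leqslant1$ everywhere and $g\gamma=1$ $\gamma$-a.e., which allows one to handle $\nu^+$ and $\nu^-$ separately after comparing with the analogous positive problem.
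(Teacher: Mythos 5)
Your plan is to reconstruct from scratch the content of Fuglede's equilibrium theorem for perfect kernels, which is exactly what the paper invokes: $g$ is perfect by \cite[Theorem~4.11]{FZ}, so \cite[Theorem~4.1]{F1} applies verbatim and the paper offers no further argument. The architecture you lay out — minimize over positive measures, parallelogram identity for the Cauchy property, perfectness for convergence, variational and Frostman inequalities for the characterization — is the right one, but two steps as written do not go through. The support inclusion $S_D^\gamma\subset\mathrm{Cl}_DQ$ does not follow from the claim that "any mass placed outside $\mathrm{Cl}_DQ$ could be removed without breaking the n.e.\ constraint on $Q$": removing mass decreases $g\gamma$ everywhere, and the minimizer satisfies $g\gamma=1$ (not $>1$) n.e.\ on $Q$, so there is no slack and the truncated measure generally leaves $\Gamma_Q^+$. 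The genuine argument is to obtain $\gamma$ as the strong, hence vague, limit of the compact equilibrium measures $\gamma_K$ for $K\subset Q$ compact with $c_g(K)\uparrow c_g(Q)$; each $\gamma_K$ is carried by $K\subset\mathrm{Cl}_DQ$, and this survives the vague limit. Alternatively, one projects onto the cone of measures carried by $\mathrm{Cl}_DQ$ and checks that this strictly decreases the $g$-norm while not decreasing the potential on $\mathrm{Cl}_DQ$.

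The passage "Combining Frostman's $g\gamma\leqslant1$ with $\int g\gamma\,d\gamma=\|\gamma\|_g^2$ forces $g\gamma=1$ $\gamma$-a.e." is circular. Frostman's maximum principle produces $g\gamma\leqslant1$ on $D$ only after one already knows $g\gamma\leqslant1$ holds on $S^\gamma_D$, i.e.\ $\gamma$-a.e., which is exactly the inequality you are trying to establish. The logical order must be: first show $g\gamma\leqslant1$ $\gamma$-a.e.\ by a variational perturbation (if $g\gamma>1$ on a set of positive $\gamma$-mass, removing a small portion lowers the energy while staying in $\Gamma_Q^+$); then Frostman upgrades this to $g\gamma\leqslant1$ on all of $D$; together with $g\gamma\geqslant1$ n.e.\ on $Q$ and absolute continuity of $\gamma$, this gives $g\gamma=1$ n.e.\ on $Q$ and $\gamma$-a.e., whence $\gamma(D)=\int g\gamma\,d\gamma=\|\gamma\|^2_g$. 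Your closing "alternative route" is the right device for extending minimality to signed $\nu\in\Gamma_Q$, but it needs the inequalities in precisely this order to be available — note that the theorem's own footnote invokes Frostman only \emph{after} (\ref{eq2}) is in hand, not before.
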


\begin{remark}\label{qe}Assume $Q$ is Borel. Then one could equally well write `{\it q.e.}' ({\it quasi everywhere\/}) instead of `n.e.' in (\ref{eq2}), where `q.e.' refers to {\it outer\/}  $\alpha$-Riesz capacity \cite[Chapter~II, Section~2, n$^\circ$~6]{L}. Indeed, being a l.s.c.\ function, $g\gamma$ is Borel measurable, and hence $\{x\in Q:\ g\gamma(x)\ne1\}$ is {\it cap\-acit\-able\/} \cite[Theorem~2.8]{L}.\end{remark}

\subsection{More about weak Riesz energy}\label{more}As shown in \cite[Section~4]{FZ-Pot1}, all $\mu\in\mathfrak M(\mathbb R^n)$ with finite weak $\alpha$-Riesz energy
${\dot E}_\alpha(\mu)$, or equivalently with
\[\kappa_{\alpha/2}\mu\in L^2(m),\] form a pre-Hil\-bert space $\dot{\mathcal E}_\alpha=\dot{\mathcal E}_\alpha(\mathbb R^n)$ with
the (weak) inner product
\[\langle\mu,\nu\rangle^\cdot_\alpha:=
\langle\kappa_{\alpha/2}\mu,\kappa_{\alpha/2}\nu\rangle_{L^2(m)}:=\int\kappa_{\alpha/2}\mu\cdot\kappa_{\alpha/2}\nu\,dm\]
and the (weak energy) norm
\[\|\mu\|^{\cdot}_\alpha:=\sqrt{{\dot E}_\alpha(\mu)}=\|\kappa_{\alpha/2}\mu\|_{L^2(m)}.\]
The Riesz composition identity $\kappa_\alpha=\kappa_{\alpha/2}\ast\kappa_{\alpha/2}$, see \cite[Section~1, Eq.~(12)]{Riesz} or \cite[Eq.~(1.1.12)]{L}, implies that
\begin{equation}\label{w-incl}\mathcal E^+_\alpha=\dot{\mathcal E}^+_\alpha\text{ \ and \ }
\mathcal E_\alpha\subset\dot{\mathcal E}_\alpha,\end{equation}
where $\dot{\mathcal E}^+_\alpha:=\dot{\mathcal E}_\alpha\cap\mathfrak M^+(\mathbb R^n)$; and moreover
\begin{equation}\label{dot0}\|\mu\|_\alpha=\|\mu\|_\alpha^\cdot\text{ \ and \ }\langle\mu,\nu\rangle_\alpha=\langle\mu,\nu\rangle^\cdot_\alpha\text{ \ for all\ }\mu,\nu\in\mathcal E_\alpha.\end{equation}

We emphasize that $\mathcal E_\alpha$ is a {\it proper\/} subset of $\dot{\mathcal E}_\alpha$. Indeed, take $D\subset\mathbb R^3$ and $\nu_0$ as in Remark~\ref{2.6} above, and write $\mu_0:=\nu_0-\nu_0'$. Then $\dot{E}_2(\mu_0)=E_{g^2_D}(\nu_0)<\infty$ according to Theorem~\ref{thm}, hence $\mu_0\in\dot{\mathcal E}_2$, but $\mu_0\not\in\mathcal E_2$ because $E_2(\nu_0)=\infty$.

\begin{remark}\label{rem-more}An example similar to that by Cartan \cite{Ca1} shows that $\dot{\mathcal E}_\alpha$ is incomplete in the topology determined by the weak energy norm $\|\cdot\|^{\cdot}_\alpha$. The following two facts are also worth mentioning, though not being used in this study.
\begin{itemize}
\item[1.] $\mathcal E_\alpha$ is {\it dense\/} in $\dot{\mathcal E}_\alpha$ in both the vague topology and the topology determined by the weak energy norm, see \cite[Theorem~4.1]{FZ-Pot1}.
    \item[2.] The pre-Hil\-bert space $\dot{\mathcal E}_\alpha$ is isometrically imbedded into its completion, the Hilbert space of all tempered distributions on
$\mathbb R^n$ with finite Deny--Schwartz energy, defined with the aid of Fourier transform. See \cite[Theorem~4.2]{FZ-Pot1} for details; this result for $\mathcal E_\alpha$ in place of $\dot{\mathcal E}_\alpha$ goes back to Deny \cite{D1}.\end{itemize}\end{remark}

\section{Weak constrained Gauss variational problem}\label{sec:cond}

Fix a (generalized) condenser $\mathbf A=(A,F)$; for the definition and permanent assumptions, see Section~\ref{sec:intr}. Parallel with the class $\mathcal E_\alpha(\mathbf A,\mathbf1)$ given by (\ref{eprcl}), consider
\[\dot{\mathcal E}_\alpha(\mathbf A,\mathbf1):=\dot{\mathcal E}_\alpha\cap\mathfrak M(\mathbf A,\mathbf1).\]
In this paper we shall be interested in minimum weak $\alpha$-Riesz energy problems over certain subclasses of $\dot{\mathcal E}_\alpha(\mathbf A,\mathbf1)$. More precisely,
we treat any $\sigma\in\mathfrak M^+(D)$ carried by $A$
with $\sigma(A)>1$ as an (upper) {\it constraint\/} acting on measures of the class $\breve{\mathfrak M}^+(A,1)$, and we denote by $\mathfrak C(A)$ the collection of all those $\sigma$. For any $\sigma\in\mathfrak C(A)$, write
\[\breve{\mathfrak M}^\sigma(A,1):=\bigl\{\nu\in\breve{\mathfrak M}^+(A,1): \ \nu\leqslant\sigma\bigr\},\]
where $\nu\leqslant\sigma$ means that $\sigma-\nu\in\mathfrak M^+(D)$.
We use the formal notation $\sigma=\infty$ to indicate that there is {\it no\/} upper constraint, that is,
\[\breve{\mathfrak M}^\infty(A,1):=\breve{\mathfrak M}^+(A,1).\]

Fix $\sigma\in\mathfrak C(A)\cup\{\infty\}$, and write
\begin{align}&\mathcal E_\alpha^\sigma(\mathbf A,\mathbf1):=\bigl\{\mu\in\mathcal E_\alpha(\mathbf A,\mathbf1): \ \mu^+\in\breve{\mathfrak M}^\sigma(A,1)\bigr\},\label{stG}\\
&\dot{\mathcal E}_\alpha^\sigma(\mathbf A,\mathbf1):=\bigl\{\mu\in\dot{\mathcal E}_\alpha(\mathbf A,\mathbf1): \ \mu^+\in\breve{\mathfrak M}^\sigma(A,1)\bigr\},\notag\\
&\ddot{\mathcal E}_\alpha^\sigma(\mathbf A,\mathbf1):=\dot{\mathcal E}_\alpha^\sigma(\mathbf A,\mathbf1)\bigcap\Bigl\{\bigcap_{k\in\mathbb N}\text{\rm Cl}_{\dot{\mathcal E}_\alpha}\,\mathcal E_\alpha^{(1+\frac1k)\sigma}(\mathbf A,\mathbf1)\Bigr\}.\label{def_E2}
\end{align}
Equivalently, $\ddot{\mathcal E}_\alpha^\sigma(\mathbf A,\mathbf1)$ consists of all $\mu\in\dot{\mathcal E}_\alpha^\sigma(\mathbf A,\mathbf1)$ such that for every $\varepsilon>0$ there is $\mu_\varepsilon\in\mathcal E_\alpha(\mathbf A,\mathbf1)$ with the properties
\[\|\mu_\varepsilon-\mu\|_\alpha^\cdot<\varepsilon\text{ \ and \ }\mu_\varepsilon^+\leqslant(1+\varepsilon)\sigma.\]
In the {\it unconstrained\/} case ($\sigma=\infty$) the latter estimate can be dropped, and hence $\ddot{\mathcal E}_\alpha^\infty(\mathbf A,\mathbf1)$ reduces to the class\footnote{Although $\mathcal E_\alpha$ is dense in $\dot{\mathcal E}_\alpha$ in the vague topology and the topology determined by the weak energy norm, {\it the question if\/ $\mathcal E_\alpha(\mathbf A,\mathbf1)$ is dense in\/ $\dot{\mathcal E}_\alpha(\mathbf A,\mathbf1)$ is still open}.}
\begin{equation}\label{circ}\ddot{\mathcal E}_\alpha(\mathbf A,\mathbf1):=\ddot{\mathcal E}_\alpha^\infty(\mathbf A,\mathbf1)=\dot{\mathcal E}_\alpha(\mathbf A,\mathbf1)\cap\text{\rm Cl}_{\dot{\mathcal E}_\alpha}\,\mathcal E_\alpha(\mathbf A,\mathbf1).\end{equation}

To ensure that $\mathcal E_\alpha^\sigma(\mathbf A,\mathbf1)$, hence $\ddot{\mathcal E}_\alpha^\sigma(\mathbf A,\mathbf1)$, is {\it nonempty}, in what follows we shall tacitly assume that either $\sigma=\infty$ holds, or  $\sigma\in\mathfrak C(A)$ has the property\footnote{Then for any compact set $K_0\subset D$ with $\sigma(K_0)>1$, we have $\sigma|_{K_0}/\sigma(K_0)\in\mathcal E_\alpha\cap\breve{\mathfrak M}^\sigma(A,1)$, and hence $\mathcal E_\alpha^\sigma(\mathbf A,\mathbf1)$ is indeed nonempty.}
\begin{equation}\label{adm-c}\sigma|_K\in\mathcal E^+_\alpha\text{ \ for every compact\ }K\subset D\end{equation}
(equivalently, $\sigma|_K\in\mathcal E^+_g$ for every compact $K\subset D$, cf.\ Lemma~\ref{eq-r-g}).

Fix also a (signed) Radon measure $\vartheta$ on $\mathbb R^n$ carried by $D$ with $E_\alpha(\vartheta)<\infty$, i.e.
\begin{equation}\label{g}\vartheta\in\mathcal E_\alpha\cap\breve{\mathfrak M}(D);\end{equation}
the measure (charge) $\vartheta-\vartheta'$, where $\vartheta'$ is the $\alpha$-Riesz balayage of $\vartheta$ onto $F$, will be thought of as an {\it external source of energy}. Having observed that
\[\vartheta-\vartheta'\in\mathcal E_\alpha\subset\dot{\mathcal E}_\alpha,\]
we define the {\it weak Gauss integral\/} $\dot{G}_{\alpha,\vartheta-\vartheta'}(\mu)$ for $\mu\in\dot{\mathcal E}_\alpha$ by
\begin{align}\label{defG2}\dot{G}_{\alpha,\vartheta-\vartheta'}(\mu)&:=\|\mu\|^{\cdot2}_\alpha+2\langle\mu,\vartheta-\vartheta'\rangle^\cdot_\alpha\\
{}&=\int[\kappa_{\alpha/2}\mu]^2\,dm+2\int\kappa_{\alpha/2}\mu\cdot\kappa_{\alpha/2}(\vartheta-\vartheta')\,dm\in(-\infty,\infty).\notag\end{align}

The aim of this paper is to investigate the following problem.

\begin{problem}\label{pr3}Does there exist $\dot{\lambda}^\sigma_{\mathbf A}\in\ddot{\mathcal E}_\alpha^\sigma(\mathbf A,\mathbf1)$ with
\[\dot{G}_{\alpha,\vartheta-\vartheta'}(\dot{\lambda}^\sigma_{\mathbf A})=\dot{G}^\sigma_{\alpha,\vartheta-\vartheta'}(\mathbf A,\mathbf1):=\inf_{\mu\in\ddot{\mathcal E}_\alpha^\sigma(\mathbf A,\mathbf1)}\,\dot{G}_{\alpha,\vartheta-\vartheta'}(\mu)?\]\end{problem}

Note that the concept of weak Gauss integral extends to $\mu\in\dot{\mathcal E}_\alpha$ the concept of (standard) Gauss integral, defined for $\mu\in\mathcal E_\alpha$ by the formula \begin{align}\label{defG1}G_{\alpha,\vartheta-\vartheta'}(\mu)&:=\|\mu\|^2_\alpha+2\langle\mu,\vartheta-\vartheta'\rangle_\alpha\\
{}&=\int\kappa_\alpha\mu\,d\mu+2\int\kappa_\alpha(\vartheta-\vartheta')\,d\mu,\notag\end{align}
see \cite[Eq.~(4.5.14)]{L}. Indeed, since $\vartheta-\vartheta'\in\mathcal E_\alpha$, we obtain from (\ref{dot0})
\begin{equation}\label{rest}\dot{G}_{\alpha,\vartheta-\vartheta'}(\mu)=G_{\alpha,\vartheta-\vartheta'}(\mu)\text{ \ for all\ }\mu\in\mathcal E_\alpha.\end{equation}
In view of the terminology used in minimum energy problems with external fields \cite{O,ST,DS}, the interest to which was initially inspired by Gauss \cite{Gauss}, Problem~\ref{pr3} may therefore be called the {\it weak constrained Gauss variational problem}.

Also observe that
\[-\infty<\dot{G}^\sigma_{\alpha,\vartheta-\vartheta'}(\mathbf A,\mathbf1)<\infty,\]
the latter estimate being clear from $\ddot{\mathcal E}_\alpha^\sigma(\mathbf A,\mathbf1)\ne\varnothing$, and the former from
\[\dot{G}_{\alpha,\vartheta-\vartheta'}(\mu)=\|\mu+(\vartheta-\vartheta')\|^{\cdot2}_\alpha-\|\vartheta-\vartheta'\|^{\cdot2}_\alpha\geqslant
-\|\vartheta-\vartheta'\|^2_\alpha>-\infty\text{ \ for all\ }\mu\in\dot{\mathcal E}_\alpha.\]

\begin{lemma}\label{l-unique}A solution to Problem\/ {\rm\ref{pr3}} is unique\/ {\rm(}if it exists\/{\rm)}.\end{lemma}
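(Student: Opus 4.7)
The plan is to exploit the Hilbert-space structure of $\dot{\mathcal E}_\alpha$ together with convexity of the admissible class, via the parallelogram identity.

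First I would rewrite the weak Gauss integral in ``square-completed'' form already noted in the text, namely
\[\dot{G}_{\alpha,\vartheta-\vartheta'}(\mu)=\|\mu+(\vartheta-\vartheta')\|^{\cdot2}_\alpha-\|\vartheta-\vartheta'\|^{\cdot2}_\alpha,\]
so that minimizing $\dot{G}_{\alpha,\vartheta-\vartheta'}$ over $\ddot{\mathcal E}_\alpha^\sigma(\mathbf A,\mathbf1)$ is equivalent to minimizing $\nu\mapsto\|\nu+(\vartheta-\vartheta')\|^\cdot_\alpha$ over that class.

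Next I would verify that $\ddot{\mathcal E}_\alpha^\sigma(\mathbf A,\mathbf1)$ is \emph{convex}. Suppose $\lambda_1,\lambda_2\in\ddot{\mathcal E}_\alpha^\sigma(\mathbf A,\mathbf1)$ and set $\lambda_\star:=(\lambda_1+\lambda_2)/2$. Since $A\subset D$ and $F=D^c$ are disjoint, the Hahn--Jordan parts satisfy $\lambda_\star^\pm=(\lambda_1^\pm+\lambda_2^\pm)/2$; hence $\lambda_\star^+(A)=\lambda_\star^-(F)=1$ and $\lambda_\star^+\leqslant\sigma$, i.e.\ $\lambda_\star\in\dot{\mathcal E}_\alpha^\sigma(\mathbf A,\mathbf1)$. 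Moreover, given approximants $\mu_{\varepsilon,i}\in\mathcal E_\alpha^{(1+\varepsilon)\sigma}(\mathbf A,\mathbf1)$ with $\|\mu_{\varepsilon,i}-\lambda_i\|^\cdot_\alpha<\varepsilon$, the average $(\mu_{\varepsilon,1}+\mu_{\varepsilon,2})/2$ lies in $\mathcal E_\alpha^{(1+\varepsilon)\sigma}(\mathbf A,\mathbf1)$ (again by disjointness of $A$ and $F$, and by $\mathcal E_\alpha$ being a linear subspace) and is within $\varepsilon$ of $\lambda_\star$ in the weak energy norm. Thus $\lambda_\star\in\ddot{\mathcal E}_\alpha^\sigma(\mathbf A,\mathbf1)$.

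Finally, assume two solutions $\dot\lambda_1,\dot\lambda_2$ exist, and let $w:=\dot{G}^\sigma_{\alpha,\vartheta-\vartheta'}(\mathbf A,\mathbf1)+\|\vartheta-\vartheta'\|^{\cdot2}_\alpha$, so that $\|\dot\lambda_i+(\vartheta-\vartheta')\|^{\cdot2}_\alpha=w$ for $i=1,2$ and $\|\dot\lambda_\star+(\vartheta-\vartheta')\|^{\cdot2}_\alpha\geqslant w$ for the midpoint $\dot\lambda_\star$. The parallelogram identity in the pre-Hilbert space $\dot{\mathcal E}_\alpha$, applied to $\dot\lambda_1+(\vartheta-\vartheta')$ and $\dot\lambda_2+(\vartheta-\vartheta')$, yields
\[\|\dot\lambda_\star+(\vartheta-\vartheta')\|^{\cdot2}_\alpha+\tfrac14\|\dot\lambda_1-\dot\lambda_2\|^{\cdot2}_\alpha=\tfrac12\bigl(\|\dot\lambda_1+(\vartheta-\vartheta')\|^{\cdot2}_\alpha+\|\dot\lambda_2+(\vartheta-\vartheta')\|^{\cdot2}_\alpha\bigr)=w.\]
Combined with $\|\dot\lambda_\star+(\vartheta-\vartheta')\|^{\cdot2}_\alpha\geqslant w$ this forces $\|\dot\lambda_1-\dot\lambda_2\|^\cdot_\alpha=0$, and since $\|\cdot\|^\cdot_\alpha$ is a genuine norm on $\dot{\mathcal E}_\alpha$ (as noted in Section~\ref{more}, the kernel $\kappa_{\alpha/2}$ being strictly positive definite forces $\kappa_{\alpha/2}\mu=0$ in $L^2(m)$ $\Longrightarrow$ $\mu=0$), we conclude $\dot\lambda_1=\dot\lambda_2$.

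The only mildly delicate point is the convexity of $\ddot{\mathcal E}_\alpha^\sigma(\mathbf A,\mathbf1)$, which hinges on the disjointness of the plates $A$ and $F$ (so that averaging preserves the Hahn--Jordan normalizations and the constraint) and on the linearity of $\mathcal E_\alpha$ for transporting the approximation property through the intersection defining $\ddot{\mathcal E}_\alpha^\sigma(\mathbf A,\mathbf1)$ in (\ref{def_E2}); everything else is the standard strict-convexity argument in a pre-Hilbert space.
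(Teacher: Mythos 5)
Your proposal is correct and takes essentially the same approach as the paper: convexity of $\ddot{\mathcal E}_\alpha^\sigma(\mathbf A,\mathbf1)$ plus the parallelogram identity in the pre-Hilbert space $\dot{\mathcal E}_\alpha$. The only cosmetic difference is that you complete the square in $\dot{G}_{\alpha,\vartheta-\vartheta'}$ before applying the parallelogram identity to $\dot\lambda_i+(\vartheta-\vartheta')$, whereas the paper applies it to $\lambda,\hat\lambda$ directly and then absorbs the linear term; you also spell out the convexity of the admissible class, which the paper merely asserts.
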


\begin{proof}This follows by standard methods based on the convexity of $\ddot{\mathcal E}_\alpha^\sigma(\mathbf A,\mathbf1)$ and the parallelogram identity in the pre-Hil\-bert space $\dot{\mathcal E}_\alpha$. Indeed, if $\lambda$ and $\hat{\lambda}$ are two solutions to Problem~\ref{pr3}, then we get from (\ref{defG2})
\[4\dot{G}_{\alpha,\vartheta-\vartheta'}(\mathbf A,\mathbf1)\leqslant4\dot{G}_{\alpha,\vartheta-\vartheta'}\biggl(\frac{\lambda+\hat{\lambda}}{2}\biggr)=
\|\lambda+\hat{\lambda}\|_\alpha^{\cdot2}+4\langle\lambda+\hat{\lambda},\vartheta-\vartheta'\rangle^\cdot_\alpha.\]
On the other hand, applying the parallelogram identity in $\dot{\mathcal E}_\alpha$ to
$\lambda$ and $\hat{\lambda}$ and then adding and
subtracting $4\langle\lambda+\hat{\lambda},\vartheta-\vartheta'\rangle^\cdot_\alpha$ we obtain
\[\|\lambda-\hat{\lambda}\|_\alpha^{\cdot2}=-\|\lambda+\hat{\lambda}\|_\alpha^{\cdot2}-
4\langle\lambda+\hat{\lambda},\vartheta-\vartheta'\rangle^\cdot_\alpha+2\dot{G}_{\alpha,\vartheta-\vartheta'}(\lambda)+2\dot{G}_{\alpha,\vartheta-\vartheta'}(\hat{\lambda}).\]
When combined with the preceding relation, this yields
\[0\leqslant\|\lambda-\hat{\lambda}\|^{\cdot2}_\alpha\leqslant-4\dot{G}_{\alpha,\vartheta-\vartheta'}(\mathbf A,\mathbf1)+
2\dot{G}_{\alpha,\vartheta-\vartheta'}(\lambda)+2\dot{G}_{\alpha,\vartheta-\vartheta'}(\hat{\lambda})=0,\]
which establishes the identity $\lambda=\hat{\lambda}$ because $\|\cdot\|^\cdot_\alpha$ is a norm in $\dot{\mathcal E}_\alpha$.
\end{proof}

In this paper we obtain sufficient conditions for the existence of a solution $\dot{\lambda}_{\mathbf A}^\sigma$ to Problem~\ref{pr3} (Theorem~\ref{main1}) and prove their sharpness (Theorem~\ref{infcap}). We establish variational inequalities for the potential $\kappa_\alpha\dot{\lambda}^\sigma_{\mathbf A}$ (Theorem~\ref{main2}, Corollary~\ref{cor-main2}) and observe that for a condenser with separated plates, those inequalities determine the solution to Problem~\ref{pr3} uniquely among the admissible measures $\mu\in\ddot{\mathcal E}_\alpha^\sigma(\mathbf A,\mathbf1)$ (Theorem~\ref{main2-sep}). Treating the solution $\dot{\lambda}_{\mathbf A}^\sigma$ as a function of $(\mathbf A,\sigma)$, we analyze its continuity relative to the vague topology and the topologies determined by the weak and standard energy norms (Theorems~\ref{th-al-cont}, \ref{th-al-cont'}). We show that the above-men\-tion\-ed Theorem~\ref{main1} fails in general once Problem~\ref{pr3} is reformulated in the setting of standard energy, thereby justifying the need for the concept of weak energy when dealing with condensers with touching plates (see Section~\ref{sec-adv}).

\section{Relevant standard minimum energy problems}\label{sec-rel}

For any $\sigma\in\mathfrak C(A)\cup\{\infty\}$ and $\vartheta$ given by (\ref{g}), write
\begin{align*}\mathcal E^\sigma_g(A,1)&:=\mathcal E_g\cap\breve{\mathfrak M}^\sigma(A,1),\\
G_{g,\vartheta}^\sigma(A,1)&:=\inf_{\nu\in\mathcal E^\sigma_g(A,1)}\,G_{g,\vartheta}(\nu),\end{align*}
where
\begin{equation*}G_{g,\vartheta}(\nu):=\|\nu\|^2_g+2\langle\nu,\vartheta\rangle_g\in(-\infty,\infty).\end{equation*}
Here we have used the fact that $\vartheta\in\mathcal E_g$ by (\ref{g}).
It follows from (\ref{nonzero}) and (\ref{adm-c}) that the class $\mathcal E^\sigma_g(A,1)$ is nonempty, and hence the following problem makes sense.

\begin{problem}\label{pr1}Does there exist $\lambda^\sigma_A\in\mathcal E^\sigma_g(A,1)$ with
\[G_{g,\vartheta}(\lambda^\sigma_A)=G^\sigma_{g,\vartheta}(A,1)?\]\end{problem}

A key observation behind the analysis of Problem~\ref{pr3}, performed in this study, is that Problem~\ref{pr3} can be reduced to Problem~\ref{pr1} whenever (\ref{suff}) holds (see Theorem~\ref{main1} and Lemma~\ref{l-ineq}); while this auxiliary problem can already be treated relatively easily because of the positivity of the admissible measures $\nu\in\mathcal E^\sigma_g(A,1)$ and the perfectness of the $\alpha$-Green kernel $g$ (see Section~\ref{sec:aux} for the results obtained).

Now we shall show that for a condenser with separated plates, Problem~\ref{pr3} is equivalent to the (standard) constrained Gauss variational problem of minimizing  $G_{\alpha,\vartheta-\vartheta'}(\mu)$ over the (nonempty) class $\mathcal E_\alpha^\sigma(\mathbf A,\mathbf1)$; for the notation, see (\ref{stG}) and~(\ref{defG1}).

\begin{problem}\label{pr4}Does there exist $\lambda^\sigma_{\mathbf A}\in\mathcal E_\alpha^\sigma(\mathbf A,\mathbf1)$ with\footnote{Note that a solution to Problem~\ref{pr4}, resp.\ Problem~\ref{pr1}, is unique (if it exists). This can be seen similarly as in proof of Lemma~\ref{l-unique} by applying convexity arguments and the parallelogram identity in the pre-Hil\-bert space $\mathcal E_\alpha$, resp.\ $\mathcal E_g$.}
\begin{equation}\label{e-pr4}G_{\alpha,\vartheta-\vartheta'}(\lambda^\sigma_{\mathbf A})=G^\sigma_{\alpha,\vartheta-\vartheta'}(\mathbf A,\mathbf1):=\inf_{\mu\in\mathcal E_\alpha^\sigma(\mathbf A,\mathbf1)}\,G_{\alpha,\vartheta-\vartheta'}(\mu)?\end{equation}\end{problem}

\begin{lemma}\label{l-sep}If the separation condition\/ {\rm(\ref{dist})} holds, then
\begin{align}\label{ext1}\mathcal E_\alpha^\sigma(\mathbf A,\mathbf1)&=\ddot{\mathcal E}_\alpha^\sigma(\mathbf A,\mathbf1),\\
\label{ext2}G^\sigma_{\alpha,\vartheta-\vartheta'}(\mathbf A,\mathbf1)&=\dot{G}^\sigma_{\alpha,\vartheta-\vartheta'}(\mathbf A,\mathbf1).
\end{align}
\end{lemma}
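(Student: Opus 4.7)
The easy direction of (\ref{ext1}), namely $\mathcal E_\alpha^\sigma(\mathbf A,\mathbf 1)\subset\ddot{\mathcal E}_\alpha^\sigma(\mathbf A,\mathbf 1)$, needs no separation hypothesis: for $\mu\in\mathcal E_\alpha^\sigma(\mathbf A,\mathbf 1)$ the constant choice $\mu_\varepsilon:=\mu$ in the characterization recalled just after (\ref{def_E2}) trivially satisfies $\|\mu_\varepsilon-\mu\|_\alpha^{\cdot}=0$ and $\mu_\varepsilon^+\leqslant(1+\varepsilon)\sigma$ for every $\varepsilon>0$. The substance of the lemma lies in the reverse inclusion.

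For that reverse inclusion the plan is to combine weak-energy approximation with the strong completeness theorem \cite[Theorem~1]{Z1}, which applies under (\ref{dist}). Given $\mu\in\ddot{\mathcal E}_\alpha^\sigma(\mathbf A,\mathbf 1)$, I would select $\mu_k\in\mathcal E_\alpha^{(1+1/k)\sigma}(\mathbf A,\mathbf 1)\subset\mathcal E_\alpha(\mathbf A,\mathbf 1)$ with $\|\mu_k-\mu\|_\alpha^{\cdot}\to 0$. The differences $\mu_k-\mu_j$ lie in $\mathcal E_\alpha$, so the isometry (\ref{dot0}) gives $\|\mu_k-\mu_j\|_\alpha=\|\mu_k-\mu_j\|_\alpha^{\cdot}\to 0$, i.e.\ $(\mu_k)$ is strongly Cauchy in $\mathcal E_\alpha$. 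Since $|\mu_k|(\mathbb R^n)=\mu_k^+(A)+\mu_k^-(F)=2$, every $\mu_k$ lies in $\mathcal E_\alpha^{\leqslant 2}(\mathbf A)$. Under (\ref{dist}), \cite[Theorem~1]{Z1} yields the strong completeness of this subspace, so $\mu_k\to\nu$ strongly in $\mathcal E_\alpha$ for some $\nu\in\mathcal E_\alpha^{\leqslant 2}(\mathbf A)$. Applying (\ref{dot0}) once more, strong convergence upgrades to weak-energy convergence $\mu_k\to\nu$ in $\dot{\mathcal E}_\alpha$; combined with the hypothesis $\mu_k\to\mu$ in $\|\cdot\|_\alpha^{\cdot}$, and with the fact that $\|\cdot\|_\alpha^{\cdot}$ is a genuine norm on $\dot{\mathcal E}_\alpha$ (as already exploited in the proof of Lemma~\ref{l-unique}), uniqueness of limits forces $\mu=\nu\in\mathcal E_\alpha$. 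Together with $\mu\in\mathfrak M(\mathbf A,\mathbf 1)$ and $\mu^+\leqslant\sigma$, which are built into the defining class $\dot{\mathcal E}_\alpha^\sigma(\mathbf A,\mathbf 1)$, this places $\mu$ in $\mathcal E_\alpha^\sigma(\mathbf A,\mathbf 1)$ and proves (\ref{ext1}). Identity (\ref{ext2}) falls out at once: since the admissible classes coincide and $\dot{G}_{\alpha,\vartheta-\vartheta'}=G_{\alpha,\vartheta-\vartheta'}$ on $\mathcal E_\alpha$ by (\ref{rest}), the two infima must agree.

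The step I expect to require the most care is the appeal to \cite[Theorem~1]{Z1}: one must quote that completeness theorem in a form that covers signed condenser measures of bounded total variation, and one must confirm that the strong limit $\nu$ inherits the structural decomposition $\nu^+$ carried by $A$ and $\nu^-$ carried by $F$ (so that $\nu\in\mathcal E_\alpha^{\leqslant 2}(\mathbf A)$ in the intended condenser sense). Modulo that sanity check on the cited result, the remainder of the argument is routine Hilbert-space bookkeeping built on the isometry (\ref{dot0}) and the identity (\ref{rest}).
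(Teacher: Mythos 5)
Your argument is correct, but it takes a genuinely different route from the paper. The paper's proof is a direct computation: using the Riesz composition identity $\kappa_\alpha=\kappa_{\alpha/2}\ast\kappa_{\alpha/2}$ together with Fubini, one shows that for any $\mu\in\mathfrak M(\mathbf A,\mathbf 1)$ the mixed term $\int\kappa_{\alpha/2}\mu^+\,\kappa_{\alpha/2}\mu^-\,dm$ equals $E_\alpha(\mu^+,\mu^-)$, which is bounded above by $[\mathrm{dist}(A,F)]^{\alpha-n}<\infty$ under (\ref{dist}); hence if $\kappa_{\alpha/2}\mu\in L^2(m)$ then both $\kappa_{\alpha/2}\mu^+$ and $\kappa_{\alpha/2}\mu^-$ lie in $L^2(m)$, so $\mu^\pm\in\dot{\mathcal E}^+_\alpha=\mathcal E^+_\alpha$ by (\ref{w-incl}). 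This yields the stronger identity $\mathcal E_\alpha^\sigma(\mathbf A,\mathbf 1)=\dot{\mathcal E}_\alpha^\sigma(\mathbf A,\mathbf 1)$, from which (\ref{ext1}) is immediate because $\ddot{\mathcal E}_\alpha^\sigma$ is sandwiched between. You instead only attack the inclusion $\ddot{\mathcal E}_\alpha^\sigma\subset\mathcal E_\alpha^\sigma$ and invoke the strong completeness theorem \cite[Theorem~1]{Z1}: the approximating sequence $(\mu_k)$ is strongly Cauchy in $\mathcal E_\alpha^{\leqslant 2}(\mathbf A)$ via the isometry (\ref{dot0}), hence strongly convergent to some $\nu$ therein, and uniqueness of limits in $\dot{\mathcal E}_\alpha$ forces $\mu=\nu\in\mathcal E_\alpha$. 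Both proofs are sound, and the concern you flag about the form of \cite[Theorem~1]{Z1} is indeed addressed by that theorem as stated (the limit automatically lies in $\mathcal E_\alpha^{\leqslant q}(\mathbf A)$, so the structural decomposition is preserved). The trade-off: the paper's route is elementary and self-contained and incidentally proves the stronger statement $\mathcal E_\alpha^\sigma=\dot{\mathcal E}_\alpha^\sigma$, which your argument does not reach (since it exploits the approximating sequence built into the definition of $\ddot{\mathcal E}_\alpha^\sigma$, not mere membership in $\dot{\mathcal E}_\alpha^\sigma$); yours leans on a comparatively heavy completeness theorem but avoids the Fubini computation.
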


\begin{proof}Fix $\mu\in\mathfrak M(\mathbf A,\mathbf 1)$. By the Riesz composition identity and Fubini's
theorem,
\begin{align*}\int\kappa_{\alpha/2}\mu^+\kappa_{\alpha/2}\mu^-\,dm
 &=\int\left(\int\kappa_{\alpha/2}(x,y)\,d\mu^+(y)\right)
 \left(\int\kappa_{\alpha/2}(x,z)\,d\mu^-(z)\right)\,dm(x)\\
 &=\int\left(\int|x-y|^{\alpha/2-n}|x-z|^{\alpha/2-n}\,dm(x)\right)\,d(\mu^+\otimes\mu^-)(y,z)\\
 &=\int\left(\int|x-y+z|^{\alpha/2-n}|x|^{\alpha/2-n}\,dm(x)\right)\,d(\mu^+\otimes\mu^-)(y,z)\\
 &=\int\kappa_\alpha(y,z)\,d(\mu^+\otimes\mu^-)(y,z)
 \leqslant\bigl[\text{\rm dist}(A,F)\bigr]^{\alpha-n}<\infty.
\end{align*}
Assuming now $\mu\in\dot{\mathcal E}_\alpha^\sigma(\mathbf A,\mathbf 1)$, we obtain
\[\bigl(\kappa_{\alpha/2}\mu^+\bigr)^2+\bigl(\kappa_{\alpha/2}\mu^-\bigr)^2=
\bigl(\kappa_{\alpha/2}\mu\bigr)^2
+2\kappa_{\alpha/2}\mu^+\,\kappa_{\alpha/2}\mu^-\in L^1(m),\]
hence $\kappa_{\alpha/2}\mu^+,\kappa_{\alpha/2}\mu^-\in L^2(m)$. In view of (\ref{w-incl}), this implies $\mu^+,\mu^-\in\dot{\mathcal E}^+_\alpha=\mathcal E^+_\alpha$, and so $\mu\in\mathcal E_\alpha^\sigma(\mathbf A,\mathbf 1)$. As $\mathcal E_\alpha^\sigma(\mathbf A,\mathbf 1)\subset\dot{\mathcal E}_\alpha^\sigma(\mathbf A,\mathbf 1)$ is obvious,
\[\mathcal E^\sigma_\alpha(\mathbf A,\mathbf 1)=\dot{\mathcal E}^\sigma_\alpha(\mathbf A,\mathbf 1).\]
On the other hand, it follows easily from (\ref{def_E2}) that
\[\mathcal E_\alpha^\sigma(\mathbf A,\mathbf 1)\subset\ddot{\mathcal E}^\sigma_\alpha(\mathbf A,\mathbf 1)\subset\dot{\mathcal E}^\sigma_\alpha(\mathbf A,\mathbf 1),\]
which combined with the preceding display establishes (\ref{ext1}). Since the weak and standard Gauss integrals for $\mu\in\mathcal E_\alpha$ coincide (see (\ref{rest})), (\ref{ext1}) implies (\ref{ext2}).
\end{proof}

\begin{corollary}\label{cor-sep} If the separation condition\/ {\rm(\ref{dist})} holds, then Problems\/~{\rm\ref{pr3}} and\/ {\rm\ref{pr4}} are simultaneously either solvable or unsolvable, and in the affirmative case their solutions coincide: $\dot{\lambda}^\sigma_{\mathbf A}=\lambda^\sigma_{\mathbf A}$.\end{corollary}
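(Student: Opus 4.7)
The plan is to deduce this corollary as an essentially formal consequence of Lemma~\ref{l-sep}, together with the identity (\ref{rest}) relating the weak and standard Gauss integrals on $\mathcal E_\alpha$. Under the separation hypothesis (\ref{dist}) the two optimization problems will literally coincide: same admissible class, same objective functional, and consequently same infimum and same minimizer (if any).

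First I would note that by (\ref{ext1}) the admissible classes agree, $\mathcal E_\alpha^\sigma(\mathbf A,\mathbf1) = \ddot{\mathcal E}_\alpha^\sigma(\mathbf A,\mathbf1)$. Since this common class is contained in $\mathcal E_\alpha$, (\ref{rest}) gives $\dot{G}_{\alpha,\vartheta-\vartheta'}(\mu) = G_{\alpha,\vartheta-\vartheta'}(\mu)$ for every admissible $\mu$, which is just a pointwise reformulation of (\ref{ext2}).

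Then, if $\lambda^\sigma_{\mathbf A}$ is a solution to Problem~\ref{pr4}, it belongs to $\ddot{\mathcal E}_\alpha^\sigma(\mathbf A,\mathbf1)$ by (\ref{ext1}) and satisfies
\[\dot{G}_{\alpha,\vartheta-\vartheta'}(\lambda^\sigma_{\mathbf A}) = G_{\alpha,\vartheta-\vartheta'}(\lambda^\sigma_{\mathbf A}) = G^\sigma_{\alpha,\vartheta-\vartheta'}(\mathbf A,\mathbf1) = \dot{G}^\sigma_{\alpha,\vartheta-\vartheta'}(\mathbf A,\mathbf1),\]
so it is also a solution to Problem~\ref{pr3}. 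Conversely, any solution $\dot{\lambda}^\sigma_{\mathbf A}$ to Problem~\ref{pr3} automatically lies in $\mathcal E_\alpha^\sigma(\mathbf A,\mathbf1)$ via (\ref{ext1}), and reading the same chain of equalities from right to left shows that it solves Problem~\ref{pr4}.

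No genuine obstacle arises. Uniqueness of the solution (whenever one exists) is ensured for Problem~\ref{pr3} by Lemma~\ref{l-unique}, and for Problem~\ref{pr4} by the parallelogram argument in $\mathcal E_\alpha$ indicated in the footnote to Problem~\ref{pr4}; so the identification $\dot{\lambda}^\sigma_{\mathbf A} = \lambda^\sigma_{\mathbf A}$ is unambiguous.
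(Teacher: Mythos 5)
Your argument is correct and is exactly what the paper intends: the paper's own proof simply says the corollary is obvious from (\ref{rest}), (\ref{ext1}), and (\ref{ext2}), and you have spelled out that chain — same admissible class by (\ref{ext1}), same objective on it by (\ref{rest}), same infimum by (\ref{ext2}), hence the same set of minimizers.
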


\begin{proof}This is obvious in view of (\ref{rest}), (\ref{ext1}), and (\ref{ext2}).\end{proof}

\begin{remark}However, for condensers with touching plates ($\text{\rm dist}(A,F)=0$), which we are mainly interested in, Corollary~\ref{cor-sep} on the simultaneous solvability or unsolvability of Problems~\ref{pr3} and \ref{pr4} fails in general; see Section~\ref{sec-adv} below for details.\end{remark}

\begin{figure}[htbp]
\begin{center}
\vspace{-.4in}
\hspace{1in}\includegraphics[width=4in]{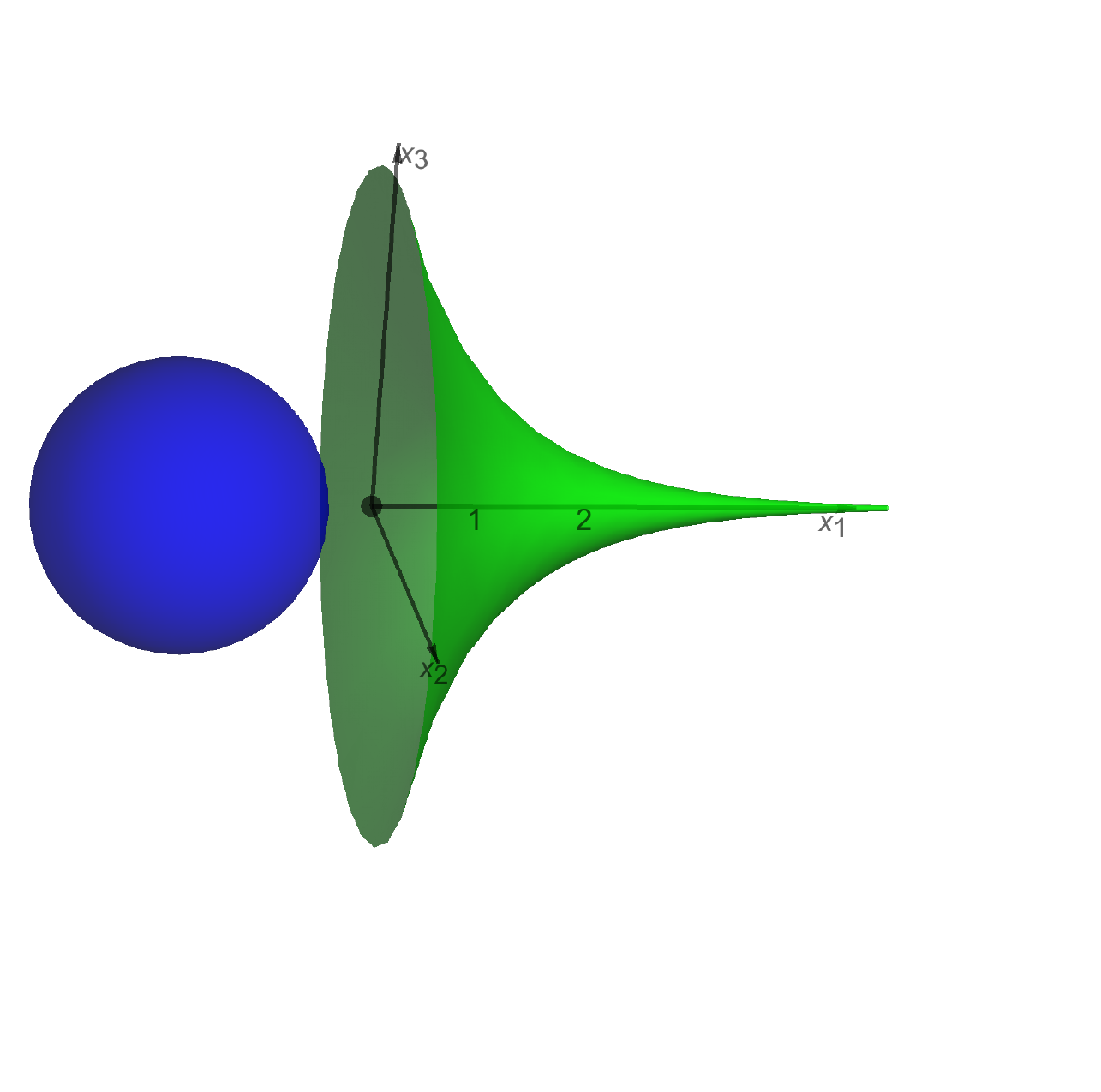}
\vspace{-.8in}
\caption{$\mathbf A:=(A,F)$ in $\mathbb R^3$, where $F=\bigl\{0\leqslant x_1<\infty,\ x_2^2+x_3^2\leqslant\varrho^2(x_1)\bigr\}$
with $\varrho(x_1)=\exp(-x_1)$ and $A$ is a closed ball in $F^c$.\vspace{-.1in}}
\label{Fig2}
\end{center}
\end{figure}

\begin{example}\label{rem-ex}Let $n=3$, $\alpha=2$, $\sigma=\infty$, $\vartheta=0$, $F:=D^c:=Q_\varrho$, and $\mathbf A:=(A,F)$, where $Q_\varrho$ is given by (\ref{descr}) and $A$ is a closed ball in $F^c$. Then Problem~\ref{pr3} is equivalent to Problem~\ref{pr4} by Corollary~\ref{cor-sep}, while the latter reduces to Problem~\ref{pr2}. Combining \cite[Theorem~5]{Z1}, providing necessary and sufficient conditions for the solvability of Problem~\ref{pr2}, with Example~\ref{ex} above implies that the solution to Problem~\ref{pr3} (equivalently, Problem~\ref{pr2}) with the stated data does exist if $\varrho$ is given by either (\ref{c1}) or (\ref{c3}), while these two problems have {\it no\/} solution
if $\varrho$ is defined by (\ref{c2}) (see Figure~\ref{Fig2}). These theoretical results have been illustrated in \cite{HWZ,OWZ}
by means of numerical experiments.\end{example}

\section{Main results}\label{sec-main}

As always, the permanent assumptions stated in Sections~\ref{sec:intr}--\ref{sec:cond} are required to hold. Throughout the present section we also assume that $F$ (${}=D^c$) is not $\alpha$-thin at infinity. The proofs of the assertions formulated below will be given in Section~\ref{sec-proofs}.

\subsection{On the solvability of Problem~\ref{pr3}} We begin with sufficient and/or necessary conditions for the existence of solutions to Problem~\ref{pr3}.

\begin{theorem}\label{main1} Assume that\/\footnote{If the separation condition (\ref{dist}) holds, then $c_g(A)<\infty$ and $c_\alpha(A)<\infty$ are actually equivalent. This is seen by applying (\ref{10}) to any $\nu\in\breve{\mathfrak M}^+(A,1)$.}
\begin{equation}\label{suff}c_g(A)<\infty\text{ \ or \ }\sigma(A)<\infty.\end{equation}
Then there is the\/ {\rm(}unique\/{\rm)} solution\/ $\dot{\lambda}^\sigma_{\mathbf A}$ to Problem\/~{\rm\ref{pr3}}, and moreover
\begin{equation}\label{th1-1}\dot{\lambda}_{\mathbf A}^\sigma=\lambda_A^\sigma-(\lambda_A^\sigma)',\end{equation}
where\/ $\lambda_A^\sigma$ is the solution to Problem\/~{\rm\ref{pr1}} and\/ $(\lambda_A^\sigma)'$ its\/ $\alpha$-Riesz balayage onto\/ $F$.
\end{theorem}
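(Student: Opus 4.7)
The plan is to reduce Problem~\ref{pr3} to the auxiliary Problem~\ref{pr1}, which under hypothesis (\ref{suff}) admits a unique solution $\lambda_A^\sigma\in\mathcal E_g^\sigma(A,1)$ by standard techniques leveraging the perfectness of the $\alpha$-Green kernel, the convexity of $\mathcal E_g^\sigma(A,1)$, and the vague lower semicontinuity of the $g$-Gauss integral (cf.\ Lemma~\ref{lemma-semi}); I take this solvability for granted. The claim is then that $\dot\lambda^\sigma_{\mathbf A}:=\lambda_A^\sigma-(\lambda_A^\sigma)'$ solves Problem~\ref{pr3}.

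The first step produces the inequality $\dot G^\sigma_{\alpha,\vartheta-\vartheta'}(\mathbf A,\mathbf1)\leqslant G^\sigma_{g,\vartheta}(A,1)$ by exhibiting, for each $\nu\in\mathcal E_g^\sigma(A,1)$, an admissible competitor $\mu_\nu:=\nu-\nu'\in\ddot{\mathcal E}_\alpha^\sigma(\mathbf A,\mathbf1)$ with
\[
\dot G_{\alpha,\vartheta-\vartheta'}(\mu_\nu)=G_{g,\vartheta}(\nu).
\]
For admissibility, $\mu_\nu^+=\nu$ and $\mu_\nu^-=\nu'$ are carried by $A$ and $F$ with $\nu(A)=1=\nu'(\mathbb R^n)$ (the latter using Theorem~\ref{bal-mass-th} and the hypothesis that $F$ is not $\alpha$-thin at infinity) and $\nu\leqslant\sigma$; finite weak energy is Theorem~\ref{thm}; and membership in $\bigcap_k\text{\rm Cl}_{\dot{\mathcal E}_\alpha}\mathcal E_\alpha^{(1+1/k)\sigma}(\mathbf A,\mathbf1)$ follows by truncating $\nu$ to an exhausting sequence of compact subsets of $D$, where (\ref{adm-c}) ensures that each truncate lies in $\mathcal E_\alpha^+$. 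The value identity reduces, via (\ref{dot0}) and Theorem~\ref{thm}, to verifying $\langle\nu-\nu',\vartheta-\vartheta'\rangle_\alpha=\langle\nu,\vartheta\rangle_g$, which follows from (\ref{3.4}) applied to $\vartheta$ on $D$ and from $\kappa_\alpha(\vartheta-\vartheta')=0$ n.e.\ on $F$ by (\ref{bal-eq}), together with absolute continuity of $\nu$ and $\nu'$; the general $\nu\in\mathcal E_g^+$ is then handled by passing to the limit along the same truncation.

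The second step is the reverse inequality $\dot G_{\alpha,\vartheta-\vartheta'}(\mu)\geqslant G_{g,\vartheta}(\mu^+)$ for $\mu\in\ddot{\mathcal E}_\alpha^\sigma(\mathbf A,\mathbf1)$, to be formalized as Lemma~\ref{l-ineq}. For $\mu\in\mathcal E_\alpha(\mathbf A,\mathbf1)$ with $\mu^+\leqslant(1+\varepsilon)\sigma$, decompose
\[
\mu=\bigl(\mu^+-(\mu^+)'\bigr)+\eta,\qquad\eta:=(\mu^+)'-\mu^-\in\mathcal E_\alpha,
\]
where $\eta$ is carried by $F$. The summand $\mu^+-(\mu^+)'$ has $\alpha$-Riesz potential vanishing n.e.\ on $F$ by (\ref{bal-eq}), which together with the absolute continuity of $\eta$ and $\vartheta'$ renders both cross terms $\langle\mu^+-(\mu^+)',\eta\rangle^\cdot_\alpha$ and $\langle\eta,\vartheta-\vartheta'\rangle^\cdot_\alpha$ zero. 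Expanding $\dot G_{\alpha,\vartheta-\vartheta'}(\mu)$ accordingly yields
\[
\dot G_{\alpha,\vartheta-\vartheta'}(\mu)=G_{g,\vartheta}(\mu^+)+\|\eta\|^{\cdot2}_\alpha\geqslant G_{g,\vartheta}(\mu^+),
\]
with equality iff $\mu^-=(\mu^+)'$. The passage to general $\mu\in\ddot{\mathcal E}_\alpha^\sigma$ uses an approximating sequence $\mu_k\in\mathcal E_\alpha^{(1+1/k)\sigma}(\mathbf A,\mathbf1)$ converging to $\mu$ in weak energy; the uniform constraint $\mu_k^+\leqslant(1+1/k)\sigma$ provides tightness of $(\mu_k^+)$, one extracts a vague limit, and vague lower semicontinuity of $G_{g,\vartheta}$ transfers the inequality to the limit.

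Combining the two steps gives $\dot G^\sigma_{\alpha,\vartheta-\vartheta'}(\mathbf A,\mathbf1)\geqslant G^\sigma_{g,\vartheta}(A,1)=G_{g,\vartheta}(\lambda_A^\sigma)=\dot G_{\alpha,\vartheta-\vartheta'}\bigl(\lambda_A^\sigma-(\lambda_A^\sigma)'\bigr)$, so $\lambda_A^\sigma-(\lambda_A^\sigma)'$ realises the infimum, establishing (\ref{th1-1}); uniqueness is Lemma~\ref{l-unique}. The main obstacle is the approximation step underlying the reverse inequality: weak-energy convergence $\mu_k\to\mu$ in $\dot{\mathcal E}_\alpha$ does not automatically transfer to vague convergence of the Jordan components $\mu_k^\pm$, and the delicate point is to exploit the uniform constraint $\mu_k^+\leqslant(1+1/k)\sigma$ together with local finiteness of $\sigma$ to guarantee tightness and to identify the vague limit of $\mu_k^+$ with $\mu^+$.
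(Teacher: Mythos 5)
Your overall architecture coincides with the paper's: establish solvability of the auxiliary Problem~\ref{pr1} (the paper's Theorem~\ref{th-g}), show that $\nu-\nu'$ with $\nu\in\mathcal E_g^\sigma(A,1)$ is an admissible competitor with $\dot G_{\alpha,\vartheta-\vartheta'}(\nu-\nu')=G_{g,\vartheta}(\nu)$ (the paper's Lemma~\ref{laux1}), and prove that the infima agree (the paper's Lemma~\ref{l-ineq}), with uniqueness coming from Lemma~\ref{l-unique}. Your first step is essentially identical to Lemma~\ref{laux1}; the way you establish $\langle\nu-\nu',\vartheta-\vartheta'\rangle_\alpha^\cdot=\langle\nu,\vartheta\rangle_g$ by direct computation on the truncates and passage to the limit is a sound variant of the paper's polarization argument. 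Your computation for $\mu\in\mathcal E_\alpha(\mathbf A,\mathbf1)$ in the second step, decomposing $\mu=(\mu^+-(\mu^+)')+\eta$ and killing the cross terms via $\kappa_\alpha(\mu^+-(\mu^+)')=\kappa_\alpha(\vartheta-\vartheta')=0$ n.e.\ on $F$, is essentially the paper's (\ref{6})--(\ref{8}), only written via an explicit orthogonal decomposition instead of via the non-expansiveness of the projection onto $\mathcal E_\alpha^+(F)$.

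The genuine gap is in the final approximation step. You propose to extract a vague cluster point $\nu_0$ of $(\mu_k^+)$ and invoke \emph{vague lower semicontinuity of $G_{g,\vartheta}$} to transfer the inequality. But $G_{g,\vartheta}(\nu)=\|\nu\|_g^2+2\int g\vartheta^+\,d\nu-2\int g\vartheta^-\,d\nu$, and the third term is vaguely upper (not lower) semicontinuous on $\mathfrak M^+(D)$; for signed $\vartheta$ the Gauss integral is therefore not vaguely l.s.c.\ in general, and the transfer does not go through. Moreover, the tightness you assert ``from the uniform constraint $\mu_k^+\leqslant(1+1/k)\sigma$'' is only available when $\sigma(A)<\infty$; when instead $c_g(A)<\infty$ one needs the equilibrium-measure argument of Lemma~\ref{lemma-aux} to rule out escape of mass. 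The paper avoids both difficulties by not passing vague limits at all at this stage: it bounds $G_{g,\vartheta}(\mu_k^+)\geqslant G_{g,\vartheta}^{(1+1/k)\sigma}(A,1)$ and then invokes the \emph{strong}-topology continuity result (Theorem~\ref{th-g-cont}, applied with $A_k=A$, $\sigma_k=(1+1/k)\sigma$), whose proof is based on Lemma~\ref{lequiv}, the parallelogram identity, and the perfectness of $g$ (Lemma~\ref{lemma-aux}), so that $G_{g,\vartheta}^{(1+1/k)\sigma}(A,1)\to G_{g,\vartheta}^\sigma(A,1)$ follows from the \emph{strong} continuity of $\nu\mapsto G_{g,\vartheta}(\nu)$, not from any vague semicontinuity. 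To repair your proof you would need to replace the vague-l.s.c.\ step by this strong-topology argument or an equivalent one.

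A further small caveat: for a general $\mu\in\ddot{\mathcal E}_\alpha^\sigma(\mathbf A,\mathbf1)$ it is not clear a priori that $\mu^+\in\mathcal E_g$, so the pointwise inequality $\dot G_{\alpha,\vartheta-\vartheta'}(\mu)\geqslant G_{g,\vartheta}(\mu^+)$ you state as the target of the second step may not even be well-posed; the paper's Lemma~\ref{l-ineq} circumvents this by only asserting $\dot G_{\alpha,\vartheta-\vartheta'}(\mu)\geqslant G_{g,\vartheta}^\sigma(A,1)$, which is all that is required.
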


\begin{example}\label{ex1}Let $A:=D:=\{x\in\mathbb R^n:\ |x|<1\}$ and $\sigma:=m|_D$. Then for any $\alpha\in(0,2]$, $D^c$ is not $\alpha$-thin at infinity and (\ref{adm-c}) holds. Since $\sigma(A)<\infty$, it follows from Theorem~\ref{main1} that Problem~\ref{pr3} with these data is solvable for any $\vartheta$ given by (\ref{g}). Observe that the solvability occurs despite the fact that the plates $A$ and $D^c$ touch each other over the whole sphere $\{x:\ |x|=1\}$.\end{example}

The following assertion shows that the sufficient conditions for the solvability of Problem~\ref{pr3},
established in Theorem~\ref{main1}, are actually {\it sharp}.

\begin{theorem}\label{infcap} Assume that\/ $\vartheta=0$ and\/ $c_g(A)=\infty$.
Then Problem\/~{\rm\ref{pr3}} is unsolvable for every\/ $\sigma\in\mathfrak C(A)\cup\{\infty\}$ such that\/ $\sigma\geqslant\sigma_0$,
where\/ $\sigma_0\in\mathfrak C(A)$ with\/ $\sigma_0(A)=\infty$ is chosen suitably.
\end{theorem}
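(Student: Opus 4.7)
The plan is to show that for a suitable $\sigma_0$, the infimum $\dot{G}^\sigma_{\alpha,0}(\mathbf A,\mathbf1)$ equals $0$ for every $\sigma\geq\sigma_0$, while no admissible measure can attain the value $0$. Non-attainment is automatic once the infimum vanishes, since $\|\mu\|_\alpha^\cdot=0$ would force $\mu=0$ (as $\|\cdot\|_\alpha^\cdot$ is a norm on $\dot{\mathcal E}_\alpha$), contradicting $\mu^+(A)=1\neq0$ for any $\mu\in\ddot{\mathcal E}_\alpha^\sigma(\mathbf A,\mathbf1)$.

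For the construction, using $c_g(A)=\infty$ together with inner regularity of $g$-capacity, I would select compact sets $K_k\subset A$ with $c_g(K_k)\geq k^2$. Let $\gamma_{K_k}$ be the $g$-equilibrium measure furnished by Theorem~\ref{th-equi} and put $\nu_k:=\gamma_{K_k}/\gamma_{K_k}(D)$, so $\nu_k\in\mathcal E_g^+\cap\breve{\mathfrak M}^+(A,1)$ with $\|\nu_k\|_g^2=1/c_g(K_k)\leq 1/k^2$. Further, by picking the $K_k$ inductively via an exhaustion $C_j\uparrow D$ by compacts (and using that compactly contained sets in $D$ have finite $g$-capacity by Lemma~\ref{eq-r-g}, so $c_g(A\cap C_j)\to\infty$), one can arrange the $K_k$ to be pairwise disjoint with supports escaping toward $\partial D$. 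Then $\sigma_0:=\sum_k\nu_k$ is a Radon measure on $D$ with $\sigma_0\geq\nu_k$ for every $k$ and $\sigma_0(A)=\infty$; since $\sigma_0|_K$ reduces on any compact $K\subset D$ to a finite sum of the $\nu_k$'s, each with compact support in $D$, Lemma~\ref{eq-r-g} yields $\sigma_0|_K\in\mathcal E_\alpha^+$, so (\ref{adm-c}) holds and $\sigma_0\in\mathfrak C(A)$.

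For the infimum, fix any $\sigma\geq\sigma_0$ and set $\mu_k:=\nu_k-\nu_k'$, with $\nu_k'$ the $\alpha$-Riesz balayage of $\nu_k$ onto $F$. Since $F$ is not $\alpha$-thin at infinity, Theorem~\ref{bal-mass-th} gives $\nu_k'(\mathbb R^n)=\nu_k(D)=1$; hence $\mu_k^+=\nu_k$, $\mu_k^-=\nu_k'$, and $\mu_k\in\mathfrak M(\mathbf A,\mathbf1)$. Lemma~\ref{eq-r-g} applied to the compactly supported $\nu_k$ gives $E_\alpha(\nu_k)<\infty$, whence $\mu_k\in\mathcal E_\alpha(\mathbf A,\mathbf1)$ with $\mu_k^+\leq\sigma_0\leq\sigma$; in particular, $\mu_k\in\mathcal E_\alpha^\sigma(\mathbf A,\mathbf1)\subset\ddot{\mathcal E}_\alpha^\sigma(\mathbf A,\mathbf1)$. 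Corollary~\ref{cor} yields
\[\|\mu_k\|_\alpha^{\cdot 2}=\dot E_\alpha(\mu_k)=E_\alpha(\nu_k-\nu_k')=E_g(\nu_k)\leq 1/k^2\to 0,\]
so $\dot G^\sigma_{\alpha,0}(\mathbf A,\mathbf1)=0$, and the conclusion follows from the norm argument above.

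The main obstacle is Step~2, the construction of $\sigma_0$: one must simultaneously ensure it is a Radon measure (requiring a locally finite placement of the supports of the $\nu_k$ in $D$), that $\sigma_0\geq\nu_k$ for every $k$ (naturally met by disjoint supports), and that the compact-restriction condition (\ref{adm-c}) holds. All three demands hinge on the fact, following from $c_g(A)=\infty$ together with Lemma~\ref{eq-r-g}, that $A$ cannot be contained in any compact subset of $D$, so there is enough room to disperse the equilibrium measures $\nu_k$ toward $\partial D$ or infinity.
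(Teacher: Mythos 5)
Your proposal is correct and follows essentially the same strategy as the paper's proof: build probability measures $\nu_k$ on escaping compact subsets of $A$ with $\|\nu_k\|_g\to0$, sum them into $\sigma_0$, note that for every $\sigma\geqslant\sigma_0$ the admissible measures $\nu_k-\nu_k'$ have weak energy $\|\nu_k\|_g^2\to0$, hence $\dot G^\sigma_{\alpha,0}(\mathbf A,\mathbf1)=0$, and conclude non-attainment from the fact that $\|\cdot\|^\cdot_\alpha$ is a norm. The paper selects the $\nu_k$ directly from the definition of infinite inner capacity on $A\setminus K_k$ and passes to $\ddot{\mathcal E}_\alpha^\sigma(\mathbf A,\mathbf1)$ via Lemma~\ref{laux1}, whereas you use $g$-equilibrium measures of compacts of growing capacity and pass via Lemma~\ref{eq-r-g} and Corollary~\ref{cor}; both implementations are fine. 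One small imprecision: to force the $K_k$ to escape towards $\partial D$ you should invoke, as the paper does, the subadditivity of $c_g$ giving $c_g(A\setminus C_j)=\infty$ for every compact $C_j$, rather than the (true but not directly useful) fact $c_g(A\cap C_j)\to\infty$, and the finiteness of $c_g$ on compacts is a standard fact rather than a consequence of Lemma~\ref{eq-r-g}.
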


Combining Theorems~\ref{main1} and \ref{infcap} leads to the following corollary.

\begin{corollary}\label{lusin}Assume that\/ $\vartheta=0$. Then the following assertions are equivalent.
\begin{itemize}\item[{\rm (i)}] $c_g(A)<\infty$.
\item[{\rm (ii)}] Problem\/~{\rm\ref{pr3}} is solvable for\/ $\sigma=\infty$.
\item[{\rm (iii)}] Problem\/~{\rm\ref{pr3}} is solvable for every\/ $\sigma\in\mathfrak C(A)\cup\{\infty\}$.
\end{itemize}
In any of these\/ {\rm(i)--(iii)} holds, then necessarily
\begin{equation}\label{e-l}c_\alpha(\partial D\cap\text{\rm Cl}_{\mathbb R^n}A)=0.\end{equation}
\end{corollary}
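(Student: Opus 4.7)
The plan is to verify the cycle (i)$\Rightarrow$(iii)$\Rightarrow$(ii)$\Rightarrow$(i) and then to derive~(\ref{e-l}) from~(i). The arrow (i)$\Rightarrow$(iii) is an immediate application of Theorem~\ref{main1}: with $\vartheta=0$, the hypothesis $c_g(A)<\infty$ realizes the first alternative in~(\ref{suff}) for every admissible~$\sigma$. The arrow (iii)$\Rightarrow$(ii) is trivial since $\infty\in\mathfrak C(A)\cup\{\infty\}$. For (ii)$\Rightarrow$(i), I argue the contrapositive: if $c_g(A)=\infty$, Theorem~\ref{infcap} (applied with $\vartheta=0$) produces $\sigma_0\in\mathfrak C(A)$ such that Problem~\ref{pr3} is unsolvable for every $\sigma\geqslant\sigma_0$; in particular, the formal choice $\sigma=\infty$ (the weakest constraint, since $\breve{\mathfrak M}^\infty(A,1)=\breve{\mathfrak M}^+(A,1)\supset\breve{\mathfrak M}^{\sigma_0}(A,1)$) is of this form, which contradicts~(ii).

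To derive~(\ref{e-l}) assuming~(i), let $Z:=\partial D\cap\text{\rm Cl}_{\mathbb R^n}A\subset F$ and suppose, for contradiction, that $c_\alpha(Z)>0$. Theorem~\ref{th-equi} applied with $Q=A$ produces the $g$-equilibrium measure $\gamma:=\gamma_A\in\mathcal E_g^+$ with $\gamma(D)=c_g(A)<\infty$, $g\gamma\leqslant 1$ on $D$, and $g\gamma=1$ n.e.\ on~$A$. Its $\alpha$-Riesz balayage $\gamma'$ onto $F$ is likewise bounded by~(\ref{m-est}); by Theorem~\ref{thm}, the charge $\mu:=\gamma-\gamma'$ lies in $\dot{\mathcal E}_\alpha$, and its Riesz potential $\kappa_\alpha\mu=\kappa_\alpha\gamma-\kappa_\alpha\gamma'$ is defined and finite n.e.\ on~$\mathbb R^n$. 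Combining~(\ref{3.4}), Theorem~\ref{th-equi}, and~(\ref{bal-eq}), this potential satisfies $\kappa_\alpha\mu=1$ n.e.\ on~$A$ and $\kappa_\alpha\mu=0$ n.e.\ on~$F$, the latter holding in particular n.e.\ on~$Z$. Choosing a compact $K\subset Z$ with $c_\alpha(K)>0$, the plan is to extract a contradiction between these two pointwise-a.e.\ constraints: every $y\in K\subset\text{\rm Cl}_{\mathbb R^n}A$ is a limit of points of~$A$, where $\kappa_\alpha\mu=1$, whereas every $\alpha$-regular point $y\in K\subset F$ forces $g\gamma(x)\to 0$ as $x\to y$ through~$D$ (since the balayage identity $\kappa_\alpha\gamma'(y)=\kappa_\alpha\gamma(y)$ makes $\kappa_\alpha\gamma-\kappa_\alpha\gamma'$ vanish at~$y$), which is incompatible with $g\gamma=1$ n.e.\ on~$A$.

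The main obstacle is making this last dichotomy rigorous past the $\alpha$-capacity-zero exceptional sets. One natural route is to invoke the quasi-continuity of $\kappa_\alpha\mu$ as the potential of a charge in $\dot{\mathcal E}_\alpha$, picking sequences of $\alpha$-regular points of~$A$ that converge to $\alpha$-regular points of~$Z$; alternatively, one integrates the identity $\kappa_\alpha\mu=1$ against an admissible test measure $\eta\in\mathcal E_\alpha^+$ carried by~$K$ and uses the adjointness of $\alpha$-Riesz balayage encoded in~(\ref{L-repr})--(\ref{bal-eq}), together with the fact that such an~$\eta$ charges no set of vanishing $\alpha$-capacity.
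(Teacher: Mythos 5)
Your handling of the three equivalences is correct and matches the paper exactly: (i)$\Rightarrow$(iii) from Theorem~\ref{main1} with the first alternative of (\ref{suff}), (iii)$\Rightarrow$(ii) trivially, and (ii)$\Rightarrow$(i) by contraposition via Theorem~\ref{infcap} (where $\sigma=\infty$ indeed satisfies $\sigma\geqslant\sigma_0$). You also correctly identify the right object for proving (\ref{e-l}): the charge $\mu=\gamma_A-\gamma_A'$, for which $\kappa_\alpha\mu=1$ n.e.\ on $A$ (via (\ref{eq2}) and (\ref{3.4})) and $\kappa_\alpha\mu=0$ n.e.\ on $F$ (via (\ref{bal-eq})), and the strategy of deriving a contradiction along $Z=\partial D\cap\text{\rm Cl}_{\mathbb R^n}A$.

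However, the crucial step --- transferring the identity $\kappa_\alpha\gamma=\kappa_\alpha\gamma'+1$ from $A$ to its closure, up to a set of zero $\alpha$-capacity --- is exactly what you flag as ``the main obstacle,'' and neither of the two routes you sketch closes it. The difficulty is that the exceptional set $\{x\in A:\ \kappa_\alpha\mu(x)\ne1\}$ has zero capacity but may still be \emph{dense} in $A$, so one cannot simply ``pick sequences of points of $A$ where $\kappa_\alpha\mu=1$'' converging to a given $y\in Z$. Your second route is worse: integrating ``the identity $\kappa_\alpha\mu=1$'' against $\eta\in\mathcal E_\alpha^+$ carried by $K\subset Z$ makes no sense, because $Z\subset\partial D\subset D^c$ is disjoint from $A\subset D$, and there the identity $\kappa_\alpha\mu=1$ simply does not hold (rather $\kappa_\alpha\mu=0$ n.e.\ on $K$). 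The adjointness/balayage machinery then only reproduces the $F$-side of the dichotomy, not the $A$-side.

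What is actually needed --- and what the paper does --- is a genuinely uniform quasi-continuity statement: by the Lusin-type theorem \cite[Theorem~3.6]{L} applied to $\kappa_\alpha\gamma$ and $\kappa_\alpha\gamma'$, for each $\varepsilon>0$ one finds an \emph{open} set $\Omega$ with $c_\alpha(\Omega)<\varepsilon$ such that both potentials are continuous relative to $\mathbb R^n\setminus\Omega$; combined with Remark~\ref{qe} one may absorb the n.e.-exceptional set of (\ref{eq2}) into $\Omega$, so $\kappa_\alpha\gamma=\kappa_\alpha\gamma'+1$ holds \emph{everywhere} on $A\setminus\Omega$ and thus, by continuity relative to the closed complement of $\Omega$, on $(\text{\rm Cl}_{\mathbb R^n}A)\setminus\Omega$. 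Letting $\varepsilon\to0$ gives the identity q.e.\ on $Z$, and the contradiction with $\kappa_\alpha\gamma=\kappa_\alpha\gamma'$ q.e.\ on $\partial D$ then forces $c_\alpha(Z)=0$. You should replace the vague appeal to ``$\alpha$-regular points'' with this uniform Lusin argument; regularity (in the boundary-regularity or Kellogg sense) is not the relevant tool here, capacity-small open sets are.
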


\begin{remark} Observe that the condensers $\mathbf A$ such that Problem~\ref{pr3} is solvable in the constrained setting ($\sigma\in\mathfrak C(A)$)
differ drastically from those for which the solvability occurs in the unconstrained setting ($\sigma=\infty$). Indeed, if $\sigma\in\mathfrak C(A)$ is bounded, then according to Theorem~\ref{main1}, Problem~\ref{pr3} is solvable even if
\[c_\alpha(\partial D\cap\text{\rm Cl}_{\mathbb R^n}A)>0\]
(actually, even if $A=D$, see Example~\ref{ex1}); compare with \cite{Bec,DFHSZ1}. However, if $\sigma=\infty$ and $\vartheta=0$ (no constraint and no external source of energy), then the solvability of Problem~\ref{pr3} necessarily implies (\ref{e-l}) (see Corollary~\ref{lusin}).\end{remark}

\subsection{Variational inequalities for the potential $\kappa_\alpha\dot{\lambda}^\sigma_{\mathbf A}$}\label{sec-decr} In Section~\ref{sec-decr} we fix $\sigma\in\mathfrak C(A)$ and assume that $\kappa_\alpha\vartheta^-|_A$ is upper bounded.  Write
\begin{equation}\label{def-f}f:=\kappa_\alpha(\vartheta-\vartheta').\end{equation}

\begin{theorem}\label{main2} Assume that\/ {\rm(\ref{suff})} holds. Then the solution\/ $\dot{\lambda}:=\dot{\lambda}^\sigma_{\mathbf A}$ to Problem\/~{\rm\ref{pr3}} {\rm(}which exists according to Theorem\/~{\rm\ref{main1}}{\rm)} satisfies the relations\/\footnote{If $\vartheta\geqslant0$, then actually $w>0$. Furthermore, then $\kappa_\alpha\dot{\lambda}+f$ is l.s.c.\ on $D$, being equal to $g(\dot{\lambda}^++\vartheta)$ on $D$, and (\ref{desc2a}) takes the following apparently stronger form: $\kappa_\alpha\dot{\lambda}+f\leqslant w$ on $S^{\dot{\lambda}^+}_D$.}
\begin{align}\label{desc0a}&\dot{\lambda}^-=(\dot{\lambda}^+)',\\
\label{desc1a}&\kappa_\alpha\dot{\lambda}+f\geqslant w\quad(\sigma-\dot{\lambda}^+)\mbox{-a.e.},\\
&\kappa_\alpha\dot{\lambda}+f\leqslant w\quad\dot{\lambda}^+\mbox{-a.e.}\label{desc2a}
\end{align}
with some\/ $w\in\mathbb R$.\end{theorem}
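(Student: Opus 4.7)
The plan is to exploit the reduction $\dot{\lambda}=\lambda-\lambda'$ provided by Theorem~\ref{main1}, where $\lambda:=\lambda_A^\sigma$ solves Problem~\ref{pr1}, and then to deduce (\ref{desc0a})--(\ref{desc2a}) from the corresponding Kuhn--Tucker type characterization of $\lambda$ in the pre-Hilbert space $\mathcal E_g$. Since $\lambda$ is carried by $A\subset D$ while $\lambda'$ is carried by $F=D^c$, the decomposition $\lambda-\lambda'$ is automatically the Hahn--Jordan decomposition of $\dot{\lambda}$; hence $\dot{\lambda}^+=\lambda$ and $\dot{\lambda}^-=\lambda'$, which is precisely (\ref{desc0a}).

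The heart of the proof is to establish that the $g$-potentials of $\lambda$ satisfy
\[g\lambda+g\vartheta\geqslant w\ \ (\sigma-\lambda)\text{-a.e.},\qquad g\lambda+g\vartheta\leqslant w\ \ \lambda\text{-a.e.}\]
for a suitable $w\in\mathbb R$. Convexity of $\mathcal E_g^\sigma(A,1)$ together with extremality of $\lambda$ shows, upon differentiating $G_{g,\vartheta}((1-t)\lambda+t\nu)$ at $t=0^+$, that
\[\int(g\lambda+g\vartheta)\,d\nu\geqslant\int(g\lambda+g\vartheta)\,d\lambda=:w\quad\text{for every }\nu\in\mathcal E_g^\sigma(A,1).\]
A standard thresholding argument then converts this integral inequality into the desired pointwise ones: if on the contrary there were Borel sets $U\subset\{g\lambda+g\vartheta>w+\delta\}$ with $\lambda(U)>0$ and $V\subset\{g\lambda+g\vartheta<w\}$ with $(\sigma-\lambda)(V)>0$, then the mass-preserving perturbation $\nu:=\lambda-\lambda|_U+c\,(\sigma-\lambda)|_V$, where $c\in(0,1]$ is chosen so that $c\,(\sigma-\lambda)(V)=\lambda(U)$, would lie in $\mathcal E_g^\sigma(A,1)$ yet violate the displayed integral inequality.

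Finally, applying the identity (\ref{3.4}) to $\lambda$ and to $\vartheta$ and recalling the definition (\ref{def-f}) of $f$ yields
\[g\lambda+g\vartheta=\kappa_\alpha(\lambda-\lambda')+\kappa_\alpha(\vartheta-\vartheta')=\kappa_\alpha\dot{\lambda}+f\quad\text{n.e.\ on }D.\]
Neither $\sigma-\dot{\lambda}^+$ (dominated via (\ref{adm-c}) by the locally $\mathcal E_\alpha^+$ measure $\sigma$) nor $\dot{\lambda}^+=\lambda\in\mathcal E_g$ charges $\alpha$-polar subsets of $D$, so the displayed n.e.\ identity transfers to an a.e.\ identity relative to each of them, turning the inequalities of the previous paragraph into (\ref{desc1a}) and (\ref{desc2a}). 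The most delicate point is the thresholding step: one must choose the cut-off $\delta$ and the rebalancing constant $c\in(0,1]$ so that $\nu$ remains admissible (in particular $\nu\leqslant\sigma$), and the hypothesis that $\kappa_\alpha\vartheta^-|_A$ is upper bounded enters to guarantee that the relevant level sets are nontrivial and that $w$ is finite.
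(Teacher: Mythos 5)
Your overall route coincides with the paper's: reduce via Theorem~\ref{main1} to $\dot{\lambda}=\lambda-\lambda'$ with $\lambda:=\lambda_A^\sigma$ solving Problem~\ref{pr1}, so that $\dot{\lambda}^+=\lambda$ and $\dot{\lambda}^-=\lambda'$, whence (\ref{desc0a}); then establish the variational inequalities for $g(\lambda+\vartheta)$; then transfer to $\kappa_\alpha\dot{\lambda}+f$ by (\ref{3.4}) and the fact that neither $\lambda\in\mathcal E_g$ nor $\sigma$ (hence $\sigma-\lambda$) charges $\alpha$-polar subsets of $D$. Your Euler inequality $\langle\lambda+\vartheta,\nu-\lambda\rangle_g\geqslant 0$ is the paper's Lemma~\ref{lequiv}, and the n.e.-to-a.e.\ transfer in your last paragraph is exactly what the paper does. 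The paper's only structural difference is that the middle step is isolated as Theorem~\ref{desc-th2}, which you rederive inline.

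The gap is in the thresholding step, and it is not cosmetic. You fix $w:=\int g(\lambda+\vartheta)\,d\lambda$ and then derive a contradiction under the hypothesis that \emph{both} $\lambda\{g(\lambda+\vartheta)>w+\delta\}>0$ and $(\sigma-\lambda)\{g(\lambda+\vartheta)<w\}>0$. But the negation of ``(\ref{desc1a}) and (\ref{desc2a}) hold'' is the disjunction of the two failures, not their conjunction, so the argument does not close. Worse, with your particular $w$ the second set is \emph{always} $(\sigma-\lambda)$-null: if (\ref{desc1})--(\ref{desc2}) hold with the true Lagrange level $L$, then $g(\lambda+\vartheta)\leqslant L$ $\lambda$-a.e.\ gives $w\leqslant L$, while $g(\lambda+\vartheta)\geqslant L\geqslant w$ $(\sigma-\lambda)$-a.e.; so the swap never fires. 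And (\ref{desc2a}) with your $w$ can genuinely fail: in the constrained regime $\lambda=\sigma$ on a $\lambda$-positive set where $g(\lambda+\vartheta)$ is strictly below $L$, which forces $w<L$, yet $g(\lambda+\vartheta)$ exceeds $w$ on part of the support of $\lambda$. The paper's Theorem~\ref{desc-th2} avoids this by taking $w:=L:=\sup\bigl\{q\in\mathbb R:\ g(\lambda+\vartheta)\geqslant q\ (\sigma-\lambda)\text{-a.e.}\bigr\}$, so that (\ref{desc1}) holds by definition, and the failure of (\ref{desc2}) with $w=L$ \emph{itself} produces a $c'>L$ with both $\lambda\{g(\lambda+\vartheta)>c'\}>0$ and, by the very definition of $L$, $(\sigma-\lambda)\{g(\lambda+\vartheta)<c'\}>0$ --- precisely the two sets the swap needs. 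Note also that the hypothesis that $\kappa_\alpha\vartheta^-|_A$ is upper bounded is used to secure $L>-\infty$; it has nothing to do with the finiteness of your $w$, which is automatic via Cauchy--Schwarz in $\mathcal E_g$. Finally, the admissibility $\nu\leqslant\sigma$ of the swapped measure requires arranging $\lambda(K_1)<(\sigma-\lambda)(K_2)$ on compacta, which cannot be dismissed by merely ``choosing $c\in(0,1]$''.
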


\begin{corollary}\label{cor-main2}Assume that both\/ {\rm(\ref{suff})} and\/ $\vartheta=0$ hold. Then the solution\/ $\dot{\lambda}:=\dot{\lambda}^\sigma_{\mathbf A}$ to Problem\/~{\rm\ref{pr3}} satisfies\/ {\rm(\ref{desc0a})} as well as
\begin{align}\label{desc1''}\kappa_\alpha\dot{\lambda}&=w\quad(\sigma-\dot{\lambda}^+)\mbox{-a.e.},\\
\kappa_\alpha\dot{\lambda}&\leqslant w\quad{on\ }\mathbb R^n\label{desc2''}\end{align}
with some\/ $w\in(0,\infty)$. If, moreover, $\alpha<2$ and\/ $m(D^c)>0$, then also
\begin{equation}\label{s1}
S_D^{\dot{\lambda}^+}=S_D^{\sigma}.
\end{equation}
\end{corollary}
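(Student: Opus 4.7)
The plan is to specialize Theorem~\ref{main2} to $\vartheta=0$ (so $f\equiv0$) and then sharpen the one-sided inequalities it provides into the claimed equalities. Theorem~\ref{main2} supplies (\ref{desc0a}) together with $\kappa_\alpha\dot{\lambda}\geqslant w$ $(\sigma-\dot{\lambda}^+)$-a.e.\ and $\kappa_\alpha\dot{\lambda}\leqslant w$ $\dot{\lambda}^+$-a.e., while its footnote already yields $w>0$ (since $\vartheta=0\geqslant0$). Writing $\lambda:=\dot{\lambda}^+$, which equals $\lambda_A^\sigma$ by Theorem~\ref{main1}, we get from (\ref{3.4}) that $\kappa_\alpha\dot{\lambda}=g\lambda$ n.e.\ on $D$. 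Applying the Frostman maximum principle for $g$ (footnote to Theorem~\ref{th-equi}) to $g\lambda\leqslant w$ $\lambda$-a.e.\ upgrades this to $g\lambda\leqslant w$ throughout $D$; on $F$, the balayage identity (\ref{bal-eq}) gives $\kappa_\alpha\dot{\lambda}=0$ n.e. Together these establish (\ref{desc2''}), and (\ref{desc1''}) then follows by sandwiching against the $(\sigma-\dot{\lambda}^+)$-a.e.\ lower bound of Theorem~\ref{main2}, after noting that $\sigma-\lambda$ does not charge sets of zero $\alpha$-Riesz capacity: indeed $\sigma|_K\in\mathcal E^+_\alpha$ for every compact $K\subset D$ by (\ref{adm-c}), $\lambda\in\mathcal E^+_g$, and $c_g(\cdot)=0\iff c_\alpha(\cdot)=0$.

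For the saturation assertion $S_D^{\dot{\lambda}^+}=S_D^\sigma$ under $\alpha<2$ and $m(D^c)>0$, the inclusion $\subset$ is immediate from $\dot{\lambda}^+\leqslant\sigma$. I would argue the reverse by contradiction. If $x_0\in S_D^\sigma\setminus S_D^{\dot{\lambda}^+}$, then there is an open ball $B$ with $x_0\in B\subset D\setminus S_D^{\dot{\lambda}^+}$, so $\lambda(B)=0$ and $\sigma(B)>0$. Since $\dot{\lambda}^-=\lambda'$ is carried by $F$, which is disjoint from $B$, one has $\dot{\lambda}|_B=0$; hence $h:=\kappa_\alpha\dot{\lambda}$ is $\alpha$-harmonic on $B$. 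By (\ref{desc1''}) and the absolute continuity of $\sigma$ relative to $c_\alpha$, the set $\{y\in B:h(y)=w\}$ has positive $\sigma$-measure; pick $y_0$ in it and $r>0$ with $\overline{B(y_0,r)}\subset B$. The $\alpha$-harmonic mean value identity, valid for $\alpha<2$,
\[w=h(y_0)=\int_{|z-y_0|>r}P^{(\alpha)}_{y_0,r}(z)\,h(z)\,dm(z),\]
with the $\alpha$-Poisson density $P^{(\alpha)}_{y_0,r}$ strictly positive on $\{|z-y_0|>r\}$ and of total $m$-mass $1$, combined with $h\leqslant w$ from (\ref{desc2''}), forces $h=w$ $m$-a.e.\ on $\{|z-y_0|>r\}$. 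But $B(y_0,r)\subset D$ gives $F\subset\{|z-y_0|>r\}$, while $h=0$ $m$-a.e.\ on $F$ (from n.e.\ on $F$ and $m\ll c_\alpha$, valid for $\alpha<n$). With $w>0$ and $m(F)=m(D^c)>0$, this is a contradiction.

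\textbf{Main obstacle.} The decisive ingredient is the $\alpha$-harmonic mean value identity, whose Poisson density is strictly positive on the \emph{entire} exterior of the ball exactly when $\alpha<2$; this nonlocal global reach is what propagates the exterior vanishing $h=0$ on $F$ into the desired contradiction, and it accounts for the appearance of $\alpha<2$ and $m(D^c)>0$ in the hypothesis (the latter being what guarantees that the exterior of any ball in $D$ meets $F$ in a set of positive Lebesgue measure). A secondary technical point is that the upper bound (\ref{desc2''}) has to be obtained via the Green-kernel Frostman principle rather than a direct Riesz domination argument, since $\lambda=\lambda_A^\sigma$ is not known to lie in $\mathcal E^+_\alpha$.
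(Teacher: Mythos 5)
Your treatment of the saturation statement~(\ref{s1}) is essentially the paper's own argument with Landkof's Theorem~1.28 unpacked: both pick a point in the exceptional neighbourhood at which $\kappa_\alpha\dot\lambda=w$, exploit that $\kappa_\alpha\dot\lambda$ is $\alpha$-harmonic there and $\leqslant w$ globally, and for $\alpha<2$ use the \emph{nonlocal} $\alpha$-Poisson mean-value identity to propagate $\kappa_\alpha\dot\lambda=w$ $m$-a.e.\ on $\mathbb R^n$, which contradicts $\kappa_\alpha\dot\lambda=0$ $m$-a.e.\ on $D^c$ (using $m(D^c)>0$). Your derivation of~(\ref{desc1''}) from the $(\sigma-\dot\lambda^+)$-a.e.\ lower bound is also the paper's, and you correctly note that $\sigma-\dot\lambda^+$ does not charge polar sets.

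Where you depart from the paper, and where a genuine gap appears, is in~(\ref{desc2''}). The paper truncates: it sets $\dot\lambda^+_k:=\dot\lambda^+|_{K_k}$ for compacts $K_k\uparrow D$; each $\dot\lambda^+_k$ has compact support in $D$, hence finite $\alpha$-Riesz energy by Lemma~\ref{eq-r-g}; so Landkof's Riesz domination theorems (Theorems~1.27, 1.29) apply to $\kappa_\alpha\dot\lambda^+_k\leqslant w+\kappa_\alpha\dot\lambda^-$ and give this inequality \emph{everywhere} on $\mathbb R^n$, after which $\kappa_\alpha\dot\lambda^+_k\uparrow\kappa_\alpha\dot\lambda^+$ pointwise recovers the result for $\dot\lambda^+$. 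You instead go through the Frostman maximum principle for $g$ and obtain $g\lambda\leqslant w$ on $D$ (correctly: $\lambda$-a.e.\ upgrades to $S_D^\lambda$ by lower semicontinuity, then Frostman gives $D$), plus $\kappa_\alpha\dot\lambda=0$ \emph{n.e.}\ on $F$. This yields~(\ref{desc2''}) only on $D$ and n.e.\ on $F$, i.e.\ on $\mathbb R^n$ minus a polar subset of $F$. But~(\ref{desc2''}) asserts the bound at \emph{every} point of $\mathbb R^n$, and $\kappa_\alpha\dot\lambda=\kappa_\alpha\lambda-\kappa_\alpha\lambda'$ can be strictly positive at the irregular points of $F$; nothing in your argument controls it there. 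So your route misses exactly the exceptional polar set that the paper's Landkof-domination route handles. You correctly identify that Riesz domination cannot be applied to $\dot\lambda^+$ directly because $\dot\lambda^+\notin\mathcal E^+_\alpha$ in general — but you do not see that the compact truncation removes that obstacle, which is precisely the paper's trick. For your later mean-value argument the n.e.\ (hence $m$-a.e.) version would in fact suffice, so the downstream part of your proposal is unaffected; the gap concerns proving the corollary as stated.
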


For a condenser with separated plates, relations (\ref{desc0a})--(\ref{desc2a}) determine the solution to Problem~\ref{pr3} uniquely among the admissible measures $\mu\in\ddot{\mathcal E}_\alpha^\sigma(\mathbf A,\mathbf1)$. To be exact, the following assertion is valid.

\begin{theorem}\label{main2-sep} Assume that the separation condition\/ {\rm(\ref{dist})} holds. Then any given measure\/ $\dot{\lambda}\in\ddot{\mathcal E}_\alpha^\sigma(\mathbf A,\mathbf1)$ serves as a solution to Problem\/~{\rm\ref{pr3}} if and only if it satisfies relations\/ {\rm(\ref{desc0a})}--{\rm(\ref{desc2a})} with some\/ $w\in\mathbb R$.
\end{theorem}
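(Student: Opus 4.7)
The separation condition allows me to work inside the standard pre-Hilbert space $\mathcal E_\alpha$: by Lemma~\ref{l-sep}, $\ddot{\mathcal E}_\alpha^\sigma(\mathbf A,\mathbf 1) = \mathcal E_\alpha^\sigma(\mathbf A,\mathbf 1)$ and the weak and standard Gauss integrals coincide on this class, so both directions reduce to statements about the standard Gauss problem. Writing $f := \kappa_\alpha(\vartheta-\vartheta')$, bilinearity of $\langle\cdot,\cdot\rangle_\alpha$ yields the cornerstone identity
\[
G_{\alpha,\vartheta-\vartheta'}(\mu) - G_{\alpha,\vartheta-\vartheta'}(\dot\lambda) = \|\mu-\dot\lambda\|_\alpha^2 + 2\int(\kappa_\alpha\dot\lambda + f)\,d(\mu-\dot\lambda)
\]
for $\mu,\dot\lambda\in\mathcal E_\alpha^\sigma(\mathbf A,\mathbf 1)$. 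I shall use repeatedly that under (\ref{desc0a}) the integrand vanishes n.e.\ on $F$: balayage of $\vartheta^\pm$ gives $f = 0$ n.e.\ on $F$, and the defining property of $(\dot\lambda^+)'$ gives $\kappa_\alpha\dot\lambda = \kappa_\alpha\dot\lambda^+ - \kappa_\alpha(\dot\lambda^+)' = 0$ n.e.\ on $F$; finite-energy measures carried by $F$ ignore the exceptional polar null set.

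For the \emph{if direction}, fix $\mu\in\mathcal E_\alpha^\sigma(\mathbf A,\mathbf 1)$. Because $\mu^-$ and $\dot\lambda^-$ belong to $\mathcal E_\alpha^+$ and are carried by $F$, the observation just made kills their contribution to the cross term, which reduces to $\int(\kappa_\alpha\dot\lambda + f)\,d(\mu^+ - \dot\lambda^+)$. Taking the Jordan decomposition $\mu^+ - \dot\lambda^+ = \tau - \rho$, the inequalities $\mu^+,\dot\lambda^+\leqslant\sigma$ force $\tau\leqslant\sigma - \dot\lambda^+$ and $\rho\leqslant\dot\lambda^+$, so (\ref{desc1a}) and (\ref{desc2a}) deliver $\int(\kappa_\alpha\dot\lambda + f)\,d\tau \geqslant w\tau(A)$ and $\int(\kappa_\alpha\dot\lambda + f)\,d\rho \leqslant w\rho(A)$; the mass balance $\tau(A) - \rho(A) = \mu^+(A) - \dot\lambda^+(A) = 0$ then cancels $w$ and forces the cross term to be non-negative. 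The cornerstone identity now yields $G_{\alpha,\vartheta-\vartheta'}(\mu) \geqslant G_{\alpha,\vartheta-\vartheta'}(\dot\lambda)$.

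For the \emph{only if direction}, convexity of $\mathcal E_\alpha^\sigma(\mathbf A,\mathbf 1)$ makes $(1-t)\dot\lambda + t\mu$ admissible for every $t\in(0,1]$, and dividing the inequality $G_{\alpha,\vartheta-\vartheta'}((1-t)\dot\lambda + t\mu) \geqslant G_{\alpha,\vartheta-\vartheta'}(\dot\lambda)$ by $2t$ and letting $t\downarrow 0$ in the cornerstone identity produces the first-order variational inequality $\int(\kappa_\alpha\dot\lambda + f)\,d(\mu-\dot\lambda) \geqslant 0$. To extract (\ref{desc0a}), I specialize to $\mu = \dot\lambda^+ - \nu$ with $\nu$ ranging over $\mathcal E_\alpha^+(F,1)$: since $\int f\,d\nu = 0$ for such $\nu$ (again by balayage of $\vartheta^\pm$), the inequality collapses to $\langle\dot\lambda^+ - \dot\lambda^-,\, \dot\lambda^- - \nu\rangle_\alpha \geqslant 0$, the classical characterization showing that $\dot\lambda^-$ is the metric projection of $\dot\lambda^+$ onto the convex set $\mathcal E_\alpha^+(F,1)$. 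Since $F$ is not $\alpha$-thin at infinity, Theorem~\ref{bal-mass-th} gives $(\dot\lambda^+)'(F) = 1$, so the (unique) projection of $\dot\lambda^+$ onto the larger cone $\mathcal E_\alpha^+(F)$ provided by (\ref{proj}) already lies in $\mathcal E_\alpha^+(F,1)$; by uniqueness of projections onto convex sets, $\dot\lambda^- = (\dot\lambda^+)'$.

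With (\ref{desc0a}) established, $\kappa_\alpha\dot\lambda + f = 0$ n.e.\ on $F$, so the first-order condition reduces to $\int(\kappa_\alpha\dot\lambda + f)\,d(\mu^+ - \dot\lambda^+) \geqslant 0$ for every $\mu^+\in\breve{\mathfrak M}^\sigma(A,1)\cap\mathcal E_\alpha^+$. A standard mass-transfer argument then extracts the Lagrange multiplier $w\in\mathbb R$: if on compact subsets of $D$ the function $\kappa_\alpha\dot\lambda + f$ were to strictly separate its $\dot\lambda^+$-essential supremum from its $(\sigma-\dot\lambda^+)$-essential infimum, moving a small mass $\delta$ from $\dot\lambda^+$ restricted to the former set to a normalized restriction of $\sigma - \dot\lambda^+$ on the latter would produce a $\mu^+\in\breve{\mathfrak M}^\sigma(A,1)\cap\mathcal E_\alpha^+$ violating the reduced inequality; admissibility of the transferred measure is guaranteed by (\ref{adm-c}). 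Picking $w$ in the resulting common interval delivers (\ref{desc1a}) and (\ref{desc2a}), the passage from the n.e.\ identity to the stated $(\sigma-\dot\lambda^+)$-a.e.\ and $\dot\lambda^+$-a.e.\ statements being legitimate because both $\sigma-\dot\lambda^+$ and $\dot\lambda^+$ are absolutely continuous with respect to $c_\alpha$ (by (\ref{adm-c}) and $\dot\lambda^+\in\mathcal E_\alpha^+$, respectively). The principal obstacle is this mass-transfer construction: the compact sets must be chosen so that the restrictions of $\sigma - \dot\lambda^+$ and $\dot\lambda^+$ to them have finite $\alpha$-Riesz energy and the renormalized perturbation remains dominated by $\sigma$, which is where condition (\ref{adm-c}) plays its decisive role.
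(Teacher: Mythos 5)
Your proof is correct and reaches the theorem by a route that differs genuinely from the paper's. The paper proves Theorem~\ref{main2-sep} in a few lines by transferring everything to the $\alpha$-Green setting: it invokes Corollary~\ref{cor-sep} and Lemma~\ref{l-sep2} to show that $\dot\lambda$ solves Problem~\ref{pr3} if and only if $\dot\lambda^-=(\dot\lambda^+)'$ and $\dot\lambda^+$ solves Problem~\ref{pr1}, and then applies the already-established characterization Theorem~\ref{desc-th2} for that auxiliary $g$-problem, translating $g(\dot\lambda^++\vartheta)=\kappa_\alpha\dot\lambda+f$ n.e.\ on $D$. You instead stay inside $\mathcal E_\alpha$ after the initial reduction through Lemma~\ref{l-sep}, and re-derive the variational characterization from scratch: the quadratic ``cornerstone'' identity handles the sufficiency direction via the Jordan decomposition of $\mu^+-\dot\lambda^+$ together with the vanishing of $\kappa_\alpha\dot\lambda+f$ n.e.\ on $F$; the necessity direction extracts (\ref{desc0a}) from the first-order condition by identifying $\dot\lambda^-$ as the metric projection of $\dot\lambda^+$ onto $\mathcal E_\alpha^+(F,1)$ and matching it with $(\dot\lambda^+)'$ via (\ref{proj}), mass conservation (the non-$\alpha$-thinness of $F$ combined with Theorem~\ref{bal-mass-th}), and uniqueness of projections onto convex sets; and the Lagrange multiplier $w$ is produced by the same type of mass-transfer perturbation that the paper uses inside Theorem~\ref{desc-th2}, with admissibility guaranteed by (\ref{adm-c}). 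What your approach buys is self-containment and a direct view of the mechanism in the Riesz setting; what the paper's approach buys is economy, since Lemma~\ref{l-sep2} and Theorem~\ref{desc-th2} are established anyway for other purposes.

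One small imprecision: when you ``pick $w$ in the resulting common interval,'' you should note why that interval is a nonempty subset of $\mathbb R$. The contradiction argument gives $\operatorname{ess\,sup}_{\dot\lambda^+}(\kappa_\alpha\dot\lambda+f)\leqslant\operatorname{ess\,inf}_{\sigma-\dot\lambda^+}(\kappa_\alpha\dot\lambda+f)$, but one must also check that the left side is $>-\infty$ (which follows since $\kappa_\alpha\dot\lambda+f=g(\dot\lambda^++\vartheta)$ is finite n.e.\ on $D$ and $\dot\lambda^+$ charges no polar set) and that the right side is $<\infty$ (same finiteness together with $(\sigma-\dot\lambda^+)(A)>0$); the lower bound on the right side uses the standing hypothesis of Section~\ref{sec-decr} that $\kappa_\alpha\vartheta^-|_A$ is upper bounded, which your write-up never explicitly invokes. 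These are exactly the considerations the paper spells out at the start of the proof of Theorem~\ref{desc-th2}, and they carry over verbatim to your setting.
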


\begin{remark}Under the hypotheses of Corollary~\ref{cor-main2}, assume moreover $\sigma(A)<\infty$. Then the number $w$ appearing in (\ref{desc1''}) and (\ref{desc2''}) can be written in the form
\[w=\frac{E_g(\dot{\lambda}^+,\sigma-\dot{\lambda}^+)}{\sigma(A)-1}\in(0,\infty).\]
Thus $E_g(\dot{\lambda}^+,\sigma-\dot{\lambda}^+)<\infty$, though $E_g(\sigma)$ may be infinite.
\end{remark}

\subsection{On continuity of $\dot{\lambda}_{\mathbf A}^\sigma$ when $\mathbf A$ and $\sigma$ vary}\label{sec-cont} Treating the solution $\dot{\lambda}_{\mathbf A}^\sigma$ to Problem~\ref{pr3} as a function of $(\mathbf A,\sigma)$, in Theorem~\ref{th-al-cont} below we establish its continuity relative to the topology determined by the weak energy norm on $\dot{\mathcal E}_\alpha$.

Given a condenser $\mathbf A=(A,F)$ and a constraint $\sigma\in\mathfrak C(A)\cup\{\infty\}$, consider a decreasing sequence (net) $(A_k)$ of relatively closed subsets of $D$ whose intersection equals $A$, and a sequence (net) $(\sigma_k)$ of constraints $\sigma_k\in\mathfrak C(A_k)\cup\{\infty\}$ such that
\[\sigma_p\geqslant\sigma_k\geqslant\sigma\text{ \ for all\ }k\geqslant p,\]
and in the case $\sigma\ne\infty$ we have $\sigma_k\ne\infty$ for all $k$ and moreover
\[\sigma_k\to\sigma\text{ \ vaguely in\ }\mathfrak M(D).\]

\begin{theorem}\label{th-al-cont} Under these requirements, assume in addition that
\begin{equation}c_g(A_1)<\infty\text{ \ or \ }\sigma_1(A_1)<\infty.\label{c1'}\end{equation}
Then
\begin{align}\label{g-cont1'}&\lim_{k}\,\dot{G}_{\alpha,\vartheta-\vartheta'}^{\sigma_k}(\mathbf A_k,\mathbf1)=\dot{G}_{\alpha,\vartheta-\vartheta'}^\sigma(\mathbf A,\mathbf1),\\
\label{g-cont2'}&\lim_{k}\,\|\dot{\lambda}^{\sigma_k}_{\mathbf A_k}-\dot{\lambda}^\sigma_{\mathbf A}\|_\alpha^\cdot=0,
\end{align}
where\/ $\mathbf A_k:=(A_k,F)$ and\/ $\dot{\lambda}^{\sigma_k}_{\mathbf A_k}$, resp.\ $\dot{\lambda}^\sigma_{\mathbf A}$, is the solution to Problem\/~{\rm\ref{pr3}} with\/ $\mathbf A_k$ and\/ $\sigma_k$, resp.\ $\mathbf A$ and\/ $\sigma$. {\rm(}Those\/ $\dot{\lambda}^{\sigma_k}_{\mathbf A_k}$ and\/ $\dot{\lambda}^\sigma_{\mathbf A}$ exist in consequence of Theorem\/~{\rm\ref{main1}}.{\rm)}
\end{theorem}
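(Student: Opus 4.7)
\emph{Strategy.} My plan is to reduce Theorem~\ref{th-al-cont} to the analogous continuity assertion for the $\alpha$-Green Gauss problem (Problem~\ref{pr1}), exploiting the isometry between weak $\alpha$-Riesz and $\alpha$-Green energy from Theorem~\ref{thm} together with the factorization $\dot\lambda^\sigma_{\mathbf A}=\lambda^\sigma_A-(\lambda^\sigma_A)'$ supplied by Theorem~\ref{main1}. The reduced setting is tractable because all admissible measures are positive and the kernel $g$ is perfect. Under (\ref{c1'}) and the monotonicity $A\subset A_k\subset A_1$, $\sigma\leqslant\sigma_k\leqslant\sigma_1$, hypothesis (\ref{suff}) holds for every $(A_k,\sigma_k)$ and for $(A,\sigma)$, so Theorem~\ref{main1} gives $\dot\lambda^{\sigma_k}_{\mathbf A_k}=\lambda^{\sigma_k}_{A_k}-(\lambda^{\sigma_k}_{A_k})'$ and $\dot\lambda^\sigma_{\mathbf A}=\lambda^\sigma_A-(\lambda^\sigma_A)'$. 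Polarizing Theorem~\ref{thm} against $\lambda^{\sigma_k}_{A_k}-\lambda^\sigma_A\in\mathcal E_g$ yields the isometry $\|\dot\lambda^{\sigma_k}_{\mathbf A_k}-\dot\lambda^\sigma_{\mathbf A}\|^{\cdot2}_\alpha=\|\lambda^{\sigma_k}_{A_k}-\lambda^\sigma_A\|_g^2$, and the analogous polarized computation of the Gauss integrals shows $\dot G^{\sigma_k}_{\alpha,\vartheta-\vartheta'}(\mathbf A_k,\mathbf1)=G^{\sigma_k}_{g,\vartheta}(A_k,1)$ and likewise for $(\mathbf A,\sigma)$. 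Hence (\ref{g-cont1'}) and (\ref{g-cont2'}) are equivalent to $G^{\sigma_k}_{g,\vartheta}(A_k,1)\to G^\sigma_{g,\vartheta}(A,1)$ and $\|\lambda^{\sigma_k}_{A_k}-\lambda^\sigma_A\|_g\to0$.

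\emph{Cauchy property and strong limit.} Because $\mathcal E^\sigma_g(A,1)\subset\mathcal E^{\sigma_k}_g(A_k,1)\subset\mathcal E^{\sigma_p}_g(A_p,1)$ whenever $k\geqslant p$, the scalars $a_k:=G^{\sigma_k}_{g,\vartheta}(A_k,1)$ form an increasing net bounded above by $G^\sigma_{g,\vartheta}(A,1)$; let $L$ denote their limit. For $k\geqslant p$ the convex midpoint $\tfrac12(\lambda^{\sigma_k}_{A_k}+\lambda^{\sigma_p}_{A_p})$ lies in $\mathcal E^{\sigma_p}_g(A_p,1)$, and applying the parallelogram identity in the pre-Hilbert space $\mathcal E_g$ to the translates $\lambda^{\sigma_k}_{A_k}+\vartheta$, $\lambda^{\sigma_p}_{A_p}+\vartheta$ gives
\[\|\lambda^{\sigma_k}_{A_k}-\lambda^{\sigma_p}_{A_p}\|_g^2\leqslant 2(a_k-a_p),\]
which tends to $0$ as $k,p\to\infty$. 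Thus the net is strongly Cauchy in $\mathcal E^+_g$; by perfectness of $g$ and strong completeness of $\mathcal E^+_g$ it converges strongly --- hence also vaguely --- to some $\hat\lambda\in\mathcal E^+_g$ with $G_{g,\vartheta}(\hat\lambda)=L$.

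\emph{Identification of the limit.} It now suffices to verify that $\hat\lambda$ is admissible for the $(A,\sigma)$-problem, $\hat\lambda\in\mathcal E^\sigma_g(A,1)$: once this is established, $L\geqslant G^\sigma_{g,\vartheta}(A,1)$ combined with the upper bound forces $L=G^\sigma_{g,\vartheta}(A,1)$, and uniqueness of the minimizer gives $\hat\lambda=\lambda^\sigma_A$, which through the first step delivers (\ref{g-cont1'}) and (\ref{g-cont2'}). The vague limit $\hat\lambda$ is carried by $\bigcap_kA_k=A$ because each $A_k$ is relatively closed in $D$; when $\sigma\ne\infty$, testing nonnegative $\varphi\in C_0(D)$ against $\lambda^{\sigma_k}_{A_k}\leqslant\sigma_k$ and using $\sigma_k\to\sigma$ vaguely gives $\hat\lambda\leqslant\sigma$.

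\emph{Main obstacle: mass preservation.} The genuinely delicate point --- which I expect to be the hardest part --- is the mass identity $\hat\lambda(A)=1$: only $\hat\lambda(A)\leqslant1$ follows automatically, while vague convergence can a priori lose mass at $\partial D$ or at infinity. I intend to handle this case-by-case according to (\ref{c1'}). If $c_g(A_1)<\infty$, take the $g$-equilibrium measure $\gamma:=\gamma_{A_1}\in\mathcal E^+_g$ of Theorem~\ref{th-equi}; by Remark~\ref{qe}, $g\gamma=1$ q.e.\ on $A_1$, and since measures in $\mathcal E^+_g$ do not charge sets of zero inner $g$-capacity, $\langle\mu,\gamma\rangle_g=\mu(A_1)$ for every $\mu\in\mathcal E^+_g$ carried by $A_1$. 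Continuity of the inner product under strong $\mathcal E_g$-convergence then yields
\[\hat\lambda(A)=\langle\hat\lambda,\gamma\rangle_g=\lim_k\langle\lambda^{\sigma_k}_{A_k},\gamma\rangle_g=\lim_k\lambda^{\sigma_k}_{A_k}(A_k)=1.\]
If instead $\sigma_1(A_1)<\infty$, the finite Radon measure $\sigma_1$ on the locally compact space $D$ is tight, so for each $\varepsilon>0$ I choose a compact $K_\varepsilon\subset D$ with $\sigma_1(D\setminus K_\varepsilon)<\varepsilon$; then $\lambda^{\sigma_k}_{A_k}\leqslant\sigma_1$ gives $\lambda^{\sigma_k}_{A_k}(K_\varepsilon)\geqslant1-\varepsilon$ uniformly in $k$, and the portmanteau bound $\hat\lambda(K_\varepsilon)\geqslant\limsup_k\lambda^{\sigma_k}_{A_k}(K_\varepsilon)$ for compact subsets under vague convergence yields $\hat\lambda(A)\geqslant1-\varepsilon$. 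Letting $\varepsilon\to0$ completes the identification and the proof.
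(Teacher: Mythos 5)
Your proof is correct and follows the paper's overall reduction: Theorems~\ref{main1} and \ref{thm} translate the weak-energy continuity into the $\alpha$-Green continuity of Theorem~\ref{th-g-cont}, which the paper then simply cites; you instead re-derive that Green-kernel result inline, and two of your inner steps differ from the paper's proof of Theorem~\ref{th-g-cont} (and its engine Lemma~\ref{lemma-aux}). For the Cauchy property you apply the parallelogram identity to the translates $\lambda^{\sigma_k}_{A_k}+\vartheta$ and $\lambda^{\sigma_p}_{A_p}+\vartheta$ and use convexity of $\mathcal E_g^{\sigma_p}(A_p,1)$, obtaining $\|\lambda^{\sigma_k}_{A_k}-\lambda^{\sigma_p}_{A_p}\|_g^2\leqslant 2(a_k-a_p)$; the paper instead uses the variational characterization Lemma~\ref{lequiv} together with expansion (\ref{mainin}) to get the sharper bound $a_k-a_p$ --- either suffices. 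For the mass identity $\hat\lambda(A)=1$ under $c_g(A_1)<\infty$ you pair $\hat\lambda$ with the single equilibrium measure $\gamma_{A_1}$, noting $\langle\mu,\gamma_{A_1}\rangle_g=\mu(A_1)$ for all $\mu\in\mathcal E_g^+$ carried by $A_1$, and pass to the limit by strong continuity of the inner product --- a neat shortcut compared with the paper's Lemma~\ref{lemma-aux}, which constructs equilibrium measures $\gamma_{A\setminus K_k}$ with $\|\gamma_{A\setminus K_k}\|_g\to0$ and uses Cauchy--Schwarz to control the escaping mass. Your tightness argument for $\sigma_1(A_1)<\infty$ is in substance the same as the paper's. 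One wording caution: saying the identity $\dot G^{\sigma_k}_{\alpha,\vartheta-\vartheta'}(\mathbf A_k,\mathbf1)=G^{\sigma_k}_{g,\vartheta}(A_k,1)$ follows by an ``analogous polarized computation'' understates the matter; the polarization of Theorem~\ref{thm} alone gives only $\dot G_{\alpha,\vartheta-\vartheta'}(\nu-\nu')=G_{g,\vartheta}(\nu)$, and the equality of the two infima uses Theorem~\ref{main1} (whose proof relies on Lemma~\ref{l-ineq}) to know the weak minimizer has the form $\lambda^{\sigma_k}_{A_k}-(\lambda^{\sigma_k}_{A_k})'$. Since you do invoke Theorem~\ref{main1} earlier, the logic is sound, but it is worth being explicit that this is where the equality of extremal values is coming from.
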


Requiring now additionally that the condensers in question have separated plates, we further establish the continuity of the solution to Problem~\ref{pr3} relative to the vague topology on $\mathfrak M(\mathbb R^n)$ as well as the strong topology on $\mathcal E_\alpha$, determined by the standard energy norm.

\begin{theorem}\label{th-al-cont'}Under the hypotheses of Theorem\/~{\rm\ref{th-al-cont}}, if moreover\/
\begin{equation}\label{sep}\text{\rm dist}(A_1,F)>0,\end{equation}
then\/ {\rm(\ref{g-cont1'})} and\/ {\rm(\ref{g-cont2'})} take the equivalent form
\begin{align}\label{g-cont1''}&\lim_{k}\,G_{\alpha,\vartheta-\vartheta'}^{\sigma_k}(\mathbf A_k,\mathbf1)=G_{\alpha,\vartheta-\vartheta'}^\sigma(\mathbf A,\mathbf1),\\
\label{g-cont2''}&\lim_{k}\,\|\lambda^{\sigma_k}_{\mathbf A_k}-\lambda^\sigma_{\mathbf A}\|_\alpha=0,
\end{align}
where\/ $\lambda^{\sigma_k}_{\mathbf A_k}$ and\/ $\lambda^\sigma_{\mathbf A}$ are the solutions to Problem\/~{\rm\ref{pr4}} {\rm(}equivalently, to Problem\/~{\rm\ref{pr3}}; see Corollary\/~{\rm\ref{cor-sep}}{\rm)} with the stated data. Furthermore, then
\begin{equation}\label{g-cont3'}\bigl(\lambda^{\sigma_k}_{\mathbf A_k}\bigr)^\pm\to\bigl(\lambda^\sigma_{\mathbf A}\bigr)^\pm\text{ \ vaguely in\ }\mathfrak M(\mathbb R^n).\end{equation}\end{theorem}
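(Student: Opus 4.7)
The strategy is to reduce everything to Theorem~\ref{th-al-cont} via Corollary~\ref{cor-sep} and then to extract vague convergence of the positive and negative parts separately by means of a cutoff function adapted to the separation assumption.

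First I would observe that $A_k\subset A_1$ and $A\subset A_1$, so (\ref{sep}) implies $\text{dist}(A_k,F)>0$ for every $k$ and $\text{dist}(A,F)>0$. Corollary~\ref{cor-sep} therefore identifies the solutions of Problems~\ref{pr3} and \ref{pr4} for each of these condensers: $\dot{\lambda}^{\sigma_k}_{\mathbf A_k}=\lambda^{\sigma_k}_{\mathbf A_k}$ and $\dot{\lambda}^\sigma_{\mathbf A}=\lambda^\sigma_{\mathbf A}$; moreover Lemma~\ref{l-sep} yields $\dot{G}^{\sigma}_{\alpha,\vartheta-\vartheta'}(\mathbf A,\mathbf 1)=G^{\sigma}_{\alpha,\vartheta-\vartheta'}(\mathbf A,\mathbf 1)$, and analogously for each $(\mathbf A_k,\sigma_k)$. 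Since by (\ref{dot0}) the two norms coincide on $\mathcal E_\alpha$, assertions (\ref{g-cont1''}) and (\ref{g-cont2''}) are immediate restatements of (\ref{g-cont1'}) and (\ref{g-cont2'}) from Theorem~\ref{th-al-cont}.

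Next I would derive vague convergence of the signed measures $\lambda^{\sigma_k}_{\mathbf A_k}\to\lambda^\sigma_{\mathbf A}$. All of these measures lie in $\mathcal E_\alpha^{\leqslant 2}(\mathbf A_1)$, for the positive (resp.\ negative) part is carried by $A_k\subset A_1$ (resp.\ by $F$) and has total mass $1$. By \cite[Theorem~1]{Z1}, quoted in Section~\ref{sec:intr}, the strong topology on $\mathcal E_\alpha^{\leqslant 2}(\mathbf A_1)$ is stronger than the induced vague topology, so (\ref{g-cont2''}) forces $\lambda^{\sigma_k}_{\mathbf A_k}\to\lambda^\sigma_{\mathbf A}$ vaguely in $\mathfrak M(\mathbb R^n)$.

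To pass from this to (\ref{g-cont3'}), I would exploit (\ref{sep}) to construct a continuous cutoff $\chi:\mathbb R^n\to[0,1]$ with $\chi\equiv 1$ on $A_1$ and $\chi\equiv 0$ on $F$, for instance
\[\chi(x):=\max\Bigl(0,\min\bigl(1,\,2\,\text{dist}(x,F)/\text{dist}(A_1,F)-1\bigr)\Bigr).\]
For any test function $\varphi\in C_0(\mathbb R^n)$ the product $\chi\varphi$ again lies in $C_0(\mathbb R^n)$; since $(\lambda^{\sigma_k}_{\mathbf A_k})^+$ is carried by $A_k\subset A_1$, where $\chi=1$, and $(\lambda^{\sigma_k}_{\mathbf A_k})^-$ by $F$, where $\chi=0$, one obtains
\[\int\varphi\,d\bigl(\lambda^{\sigma_k}_{\mathbf A_k}\bigr)^+=\int\chi\varphi\,d\lambda^{\sigma_k}_{\mathbf A_k}\longrightarrow\int\chi\varphi\,d\lambda^\sigma_{\mathbf A}=\int\varphi\,d\bigl(\lambda^\sigma_{\mathbf A}\bigr)^+,\]
the middle convergence being the vague convergence of signed measures established just above. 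An analogous argument with $1-\chi$ in place of $\chi$ gives vague convergence of the negative parts, completing the proof of (\ref{g-cont3'}). I do not expect any serious obstacle here; the only step requiring a little care is the identification of all $\lambda^{\sigma_k}_{\mathbf A_k}$ and of $\lambda^\sigma_{\mathbf A}$ as living in a common topological subspace $\mathcal E_\alpha^{\leqslant 2}(\mathbf A_1)$, so that \cite[Theorem~1]{Z1} can be invoked to promote strong convergence to vague convergence.
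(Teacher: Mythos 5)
Your proposal is correct and follows essentially the same route as the paper: identify $\dot{\lambda}^{\sigma_k}_{\mathbf A_k}=\lambda^{\sigma_k}_{\mathbf A_k}$ via Corollary~\ref{cor-sep}, deduce (\ref{g-cont1''}) and (\ref{g-cont2''}) from (\ref{g-cont1'}), (\ref{g-cont2'}), (\ref{ext2}), and (\ref{dot0}), then pass to vague convergence of the signed measures in $\mathcal E_\alpha^{\leqslant 2}(\mathbf A_1)$ via \cite[Theorem~1]{Z1}, and finally disentangle the positive and negative parts. The only cosmetic difference is that the paper invokes the Tietze--Urysohn extension theorem to separate $\lambda_k^+$ from $\lambda_k^-$, whereas you construct the cutoff $\chi$ explicitly from the distance function; both accomplish exactly the same thing.
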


\section{About (auxiliary) Problem~\ref{pr1}}\label{sec:aux}

\subsection{Criteria for the solvability of Problem~\ref{pr1}} The main tool of our analysis of Problem~\ref{pr1} is a strong completeness result for a topological subspace of $\mathcal E_g$, established in Lemma~\ref{lemma-aux} below. In turn, its proof is based on the perfectness of the $\alpha$-Green kernel $g$, discovered recently in \cite{FZ} (see Section~\ref{sec-gr} above).

\begin{lemma}\label{lemma-aux} Assume\/ {\rm(\ref{suff})} holds. Then the cone\/ $\mathcal E^\sigma_g(A,1)$ is complete in the strong topology on\/ $\mathcal E_g$, determined by\/ $\|\cdot\|_g$. In more detail, any strong Cauchy sequence\/ {\rm(}net\/{\rm)} $(\nu_j)\subset\mathcal E^\sigma_g(A,1)$ converges both
strongly and vaguely to a unique\/ $\nu_0\in\mathcal
E^\sigma_g(A,1)$.
\end{lemma}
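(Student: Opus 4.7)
The plan is to invoke the strong completeness of the cone $\mathcal E_g^+$ (a consequence of the perfectness of the $\alpha$-Green kernel recalled in Section~\ref{sec-gr}) to produce the candidate limit $\nu_0$, and then to verify that this limit in fact lies in $\mathcal E_g^\sigma(A,1)$. Since $\mathcal E_g^\sigma(A,1)\subset\mathcal E_g^+$, the given strong Cauchy sequence (net) $(\nu_j)$ converges strongly in $\mathcal E_g$ to some $\nu_0\in\mathcal E_g^+$, and the same $\nu_0$ is its (unique) vague cluster point in $\mathfrak M^+(D)$, the strong topology on $\mathcal E_g^+$ being Hausdorff and stronger than the induced vague topology.

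It then remains to check that $\nu_0$ is carried by $A$, satisfies $\nu_0\leqslant\sigma$ and $\nu_0(A)=1$, and is extendible. The first two properties are immediate from vague convergence: $D\setminus A$ is open in $D$, so vague lower semicontinuity on open sets (Lemma~\ref{lemma-semi}) gives $\nu_0(D\setminus A)\leqslant\liminf_j\nu_j(D\setminus A)=0$; and since $\sigma-\nu_j\in\mathfrak M^+(D)$ for every $j$, the vague closedness of $\mathfrak M^+(D)$ yields $\sigma-\nu_0\in\mathfrak M^+(D)$. Once $\nu_0(A)=1$ is established, extendibility is automatic, for then $\nu_0(D)=\nu_0(A)=1<\infty$, so $\nu_0$ is bounded.

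The main obstacle is the mass preservation $\nu_0(A)=1$, which under vague convergence alone can fail, and this is precisely where hypothesis (\ref{suff}) enters, in two genuinely different ways. In the first alternative $\sigma(A)<\infty$, inner regularity of the finite Radon measure $\sigma|_A$ supplies, for any $\varepsilon>0$, a compact $K_\varepsilon\subset A$ with $\sigma(A\setminus K_\varepsilon)<\varepsilon$; since $\nu_j\leqslant\sigma$ and $\nu_j$ is carried by $A$, this yields the uniform tightness $\nu_j(D\setminus K_\varepsilon)<\varepsilon$. Combined with the standard vague upper semicontinuity on compact sets, one obtains
\[\nu_0(A)\geqslant\nu_0(K_\varepsilon)\geqslant\limsup_j\nu_j(K_\varepsilon)\geqslant1-\varepsilon;\]
letting $\varepsilon\to 0$, and noting the easy upper bound $\nu_0(D)\leqslant\liminf_j\nu_j(D)=1$ obtained by exhausting $D$ by relatively compact open subsets and again applying Lemma~\ref{lemma-semi}, one gets $\nu_0(A)=1$.

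In the second alternative $c_g(A)<\infty$, I appeal instead to Theorem~\ref{th-equi} to obtain the $g$-equilibrium measure $\gamma_A\in\mathcal E_g^+$ with $g\gamma_A=1$ n.e.\ on $A$. Since measures of finite $g$-energy do not charge sets of zero inner $g$-capacity, this identity holds $\nu_j$-a.e.\ on $A$ for every $j$, and also $\nu_0$-a.e.\ on $A$ (using that $\nu_0$ is already known to be carried by $A$). Hence
\[\langle\nu_j,\gamma_A\rangle_g=\int g\gamma_A\,d\nu_j=\nu_j(A)=1\ \text{for every}\ j,\qquad\langle\nu_0,\gamma_A\rangle_g=\nu_0(A),\]
and the strong convergence $\nu_j\to\nu_0$ in $\mathcal E_g$ forces $\nu_0(A)=\lim_j\langle\nu_j,\gamma_A\rangle_g=1$. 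This settles mass preservation in both cases and completes the argument; uniqueness of $\nu_0$ is contained in the perfectness statement, since $\|\cdot\|_g$ is a norm on $\mathcal E_g$.
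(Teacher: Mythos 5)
Your proof is correct, and it follows the paper's overall skeleton (strong completeness of $\mathcal E_g^+$ via perfectness of $g$ to produce the strong and vague limit $\nu_0$, then verifying the side conditions, with mass preservation $\nu_0(A)=1$ being the crux where (\ref{suff}) enters). In the alternative $\sigma(A)<\infty$ your tightness argument via inner regularity of $\sigma|_A$ is essentially identical to the paper's (which phrases it in terms of a fixed exhaustion $(K_k)$ of $A$ and $\nu_j(A\setminus K_k)\leqslant\sigma(A\setminus K_k)\to 0$).

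Where you genuinely depart from the paper is the alternative $c_g(A)<\infty$. You apply Theorem~\ref{th-equi} once, to $A$ itself, obtaining the $g$-equilibrium measure $\gamma_A$ with $g\gamma_A=1$ n.e.\ on $A$, and then observe that for any $\nu\in\mathcal E_g^+$ carried by $A$ one has $\nu(A)=\int g\gamma_A\,d\nu=\langle\gamma_A,\nu\rangle_g$ (valid because the exceptional Borel set has zero inner $g$-capacity and hence is $\nu$-negligible). Mass preservation then drops out of the strong continuity of the linear functional $\langle\gamma_A,\cdot\rangle_g$, by Cauchy--Schwarz. The paper instead applies Theorem~\ref{th-equi} to the shrinking tails $\check K_k=A\setminus K_k$, shows that $(\gamma_{\check K_k})$ is strong Cauchy with vague limit $0$, deduces $\|\gamma_{\check K_k}\|_g\to 0$ from perfectness, and closes via Cauchy--Schwarz and the strong boundedness of $(\nu_j)$. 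Your route is shorter and conceptually cleaner: it isolates the key structural fact that on $\{\nu\in\mathcal E_g^+:\ \nu\text{ carried by }A\}$ the total mass $\nu(A)$ is literally the inner product with a fixed element of $\mathcal E_g^+$, hence strongly continuous. The paper's tail argument buys essentially nothing extra here, and is strictly more involved; both arguments rely on the same capacity ingredients. Everything else in your write-up (carrier in $A$ via l.s.c.\ on the open set $D\setminus A$, $\nu_0\leqslant\sigma$ via vague closedness of $\mathfrak M^+(D)$ when $\sigma\ne\infty$, extendibility from $\nu_0(D)=1<\infty$, uniqueness from Hausdorffness of the norm and vague topologies) is in order.
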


\begin{proof}Fix a strong Cauchy sequence $(\nu_j)\subset\mathcal E^\sigma_g(A,1)$; then it is strongly bounded:
\begin{equation}\label{bound}\sup_{j\in\mathbb N}\,\|\nu_j\|_g<\infty.\end{equation}
Being obviously vaguely bounded, it has a vague cluster point $\nu_0\in\mathfrak M^+(D)$ \cite[Chapter~III, Section~2, Proposition~9]{B2}.
Moreover, $\nu_0\in\mathcal E^+_g$ because the energy $E_g(\cdot)$ is vaguely l.s.c.\ on $\mathfrak M^+(D)$ \cite[Lemma~2.2.1(e)]{F1}. Since the kernel $g$ is perfect, $(\nu_j)$ converges both strongly and vaguely to $\nu_0$, and this $\nu_0$ is unique. Furthermore, $\nu_0$ is carried by $A$, because the cone of all $\nu\in\mathfrak M^+(D)$ carried by a relatively closed subset of $D$ is vaguely closed in $\mathfrak M(D)$. Finally, $\nu_0\leqslant\sigma$, the vague limit of positive measures likewise being positive.

It remains therefore to show that under condition (\ref{suff}),
\begin{equation}\label{b}\nu_0(A)=1.\end{equation}
Choose an increasing sequence $(K_k)$ of compact sets with the union $A$. Since the indicator function $1_{K_k}$ of $K_k$ is upper
semicontinuous on $D$, while $1_D$ is (finitely) continuous
on $D$, we obtain from Lemma~\ref{lemma-semi} applied
subsequently to $1_D$ and $-1_{K_k}$
\begin{align*}1=\lim_{j\to\infty}\,\nu_j(A)\geqslant\nu_0(A)&=\lim_{k\to\infty}\,\nu_0(K_k)\geqslant
\lim_{k\to\infty}\,\limsup_{j\to\infty}\,\nu_j(K_k)\\&{}=
1-\lim_{k\to\infty}\,\liminf_{j\to\infty}\,\nu_j(A\setminus
K_k).\end{align*}
Equality (\ref{b}) will therefore follow if we prove the relation
\begin{equation}\label{l}\lim_{k\to\infty}\,\liminf_{j\to\infty}\,\nu_j(A\setminus
K_k)=0.\end{equation}

Assume first that the constraint $\sigma$ is bounded. Since
\[\infty>\sigma(A)=\lim_{k\to\infty}\,\sigma(K_k),\]
we have
\[\lim_{k\to\infty}\,\sigma(A\setminus K_k)=0.\]
Combining this with
\[\nu_j(A\setminus K_k)\leqslant\sigma(A\setminus K_k)\text{ \ for all\ }j,k\in\mathbb N\]
gives (\ref{l}).

Assuming now $c_g(A)<\infty$, write $\check{K}_k:=A\setminus K_k$. According to Theorem~\ref{th-equi}, there exists the $g$-equ\-il\-ib\-rium measure $\gamma_k:=\gamma_{\check{K}_k}$ for $\check{K}_k$, and this $\gamma_k$ solves the problem of minimizing $E_g(\nu)$ over the convex cone $\Gamma_k:=\Gamma_{\check{K}_k}$ of all $\nu\in\mathcal E_g$ with $g\nu\geqslant1$ n.e.\ on $\check{K}_k$. In view of the monotonicity
of $(\check{K}_k)$, we observe from (\ref{eq2}) with $Q=\check{K}_k$ that $\gamma_k\in\Gamma_p$ for all $p\geqslant k$. Therefore, by \cite[Lemma~4.1.1]{F1},
\[
\|\gamma_k-\gamma_p\|^2_g\leqslant\|\gamma_k\|^2_g-\|\gamma_p\|^2_g\text{ \ for all\ }p\geqslant k.\]
Furthermore, it is clear from (\ref{eq1})  with $Q=\check{K}_k$ that the sequence
$(\|\gamma_k\|_g)$ is
bounded and nonincreasing, and hence it is Cauchy in $\mathbb R$. This together with the
preceding inequality shows that $(\gamma_k)$ is strong Cauchy in $\mathcal
E^+_g$. Since it obviously converges vaguely in $\mathfrak M(D)$ to zero, another application of the perfectness of $g$ implies that $\gamma_k\to0$ also strongly in $\mathcal
E^+_g$. Hence,
\begin{equation*}
\lim_{k\to\infty}\,\|\gamma_k\|_g=0.
\label{27}
\end{equation*}
Besides, by the Cauchy--Schwarz (Bunyakovski) inequality in $\mathcal E_g$,
\[\nu_j(\check{K}_k)=\int1_{\check{K}_k}\,d\nu_j\leqslant\int g\gamma_k\,d\nu_j=\langle
\gamma_k,\nu_j\rangle_g\leqslant\|\gamma_k\|_g\cdot\|\nu_j\|_g\text{ \ for all\
}j,k,\]
where the former inequality is obtained from (\ref{eq2}) with $Q=\check{K}_k$ in view of the fact that, being of zero capacity, the Borel set $\{x\in\check{K}_k:\ g\gamma_k(x)<1\}$ cannot carry $\nu_j\in\mathcal E_g^+$. Combining the last two displays with (\ref{bound}) gives (\ref{l}).\end{proof}

\begin{theorem}\label{th-g}If\/ {\rm(\ref{suff})} holds, there is the\/ {\rm(}unique\/{\rm)} solution\/ $\lambda^\sigma_{A}$ to Problem\/~{\rm\ref{pr1}}.\end{theorem}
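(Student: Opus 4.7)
The plan is to apply the classical Hilbert-space minimization argument, with the role of completeness played by Lemma~\ref{lemma-aux}. Since $\vartheta\in\mathcal{E}_g$, rewriting the Gauss integral in the form $G_{g,\vartheta}(\nu)=\|\nu+\vartheta\|_g^{2}-\|\vartheta\|_g^{2}$ shows at once that it is bounded below on $\mathcal{E}_g^{\sigma}(A,1)$, hence $G^{\sigma}_{g,\vartheta}(A,1)\in\mathbb{R}$ and a minimizing sequence $(\nu_j)\subset\mathcal{E}_g^{\sigma}(A,1)$ exists.

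The core step is to show that $(\nu_j)$ is strong Cauchy in $\mathcal{E}_g$. By convexity of $\mathcal{E}_g^{\sigma}(A,1)$, the midpoint $(\nu_j+\nu_k)/2$ again lies in $\mathcal{E}_g^{\sigma}(A,1)$, so $G_{g,\vartheta}((\nu_j+\nu_k)/2)\geq G^{\sigma}_{g,\vartheta}(A,1)$. Applying the parallelogram identity in the pre-Hilbert space $\mathcal{E}_g$ to the elements $\nu_j+\vartheta$ and $\nu_k+\vartheta$ and rearranging yields
\[
\|\nu_j-\nu_k\|_g^{2}\;\leqslant\;2G_{g,\vartheta}(\nu_j)+2G_{g,\vartheta}(\nu_k)-4G^{\sigma}_{g,\vartheta}(A,1)\;\longrightarrow\;0,
\]
proving the Cauchy property.

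Once this is in hand, Lemma~\ref{lemma-aux} (whose hypothesis is exactly~(\ref{suff})) provides a unique $\lambda_A^{\sigma}\in\mathcal{E}_g^{\sigma}(A,1)$ to which $(\nu_j)$ converges both strongly and vaguely. Strong convergence yields $\|\nu_j\|_g\to\|\lambda_A^{\sigma}\|_g$, while Cauchy--Schwarz gives $\langle\nu_j,\vartheta\rangle_g\to\langle\lambda_A^{\sigma},\vartheta\rangle_g$, hence $G_{g,\vartheta}(\nu_j)\to G_{g,\vartheta}(\lambda_A^{\sigma})$, establishing that $\lambda_A^{\sigma}$ is a minimizer. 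Uniqueness follows from a standard convexity plus parallelogram argument in $\mathcal{E}_g$, identical in spirit to the proof of Lemma~\ref{l-unique} (using here that $g$ is strictly positive definite, so $\|\cdot\|_g$ is a norm).

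All the substantive difficulty has in fact been absorbed into Lemma~\ref{lemma-aux}: the delicate point there is to preserve the normalization $\nu_0(A)=1$ under vague convergence when $A$ need not be compact, which is precisely what forces hypothesis~(\ref{suff})---either boundedness of $\sigma$ (to prevent mass from escaping via the constraint) or finite $g$-capacity of $A$ (exploited through the equilibrium measures $\gamma_{A\setminus K_k}$ and the perfectness of $g$). Given Lemma~\ref{lemma-aux}, the present theorem is essentially a formality.
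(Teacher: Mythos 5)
Your proof is correct and follows essentially the same route as the paper's: lower-boundedness of $G_{g,\vartheta}$ via the identity $G_{g,\vartheta}(\nu)=\|\nu+\vartheta\|_g^2-\|\vartheta\|_g^2$, a minimizing sequence made strong Cauchy by convexity plus the parallelogram identity, Lemma~\ref{lemma-aux} to extract the strong/vague limit in $\mathcal E_g^\sigma(A,1)$, and strong continuity of $G_{g,\vartheta}$ to conclude. The only cosmetic difference is that you spell out the convergence of the norm and inner-product terms separately where the paper simply invokes strong continuity of $G_{g,\vartheta}$; your concluding remark that the real content lives in Lemma~\ref{lemma-aux} is exactly the point.
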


\begin{proof}Since obviously
\begin{equation}\label{lb}G_{g,\vartheta}(\nu)=\|\nu+\vartheta\|_g^2-\|\vartheta\|_g^2\geqslant-\|\vartheta\|_g^2>-\infty\text{ \ for all\ }\nu\in\mathcal E_g,\end{equation}
$G^\sigma_{g,\vartheta}(A,1)$ is finite. Choose a sequence $(\nu_j)\subset\mathcal E^\sigma_g(A,1)$ so that
\begin{equation*}\lim_{j\to\infty}\,G_{g,\vartheta}(\nu_j)=G^\sigma_{g,\vartheta}(A,1).\end{equation*}
Based on the convexity of $\mathcal E^\sigma_g(A,1)$, we obtain by the parallelogram identity in $\mathcal E_g$
\begin{equation*}0\leqslant\|\nu_j-\nu_k\|^2_g\leqslant-4G^\sigma_{g,\vartheta}(A,1)+
2G_{g,\vartheta}(\nu_j)+2G_{g,\vartheta}(\nu_k)\text{ \ for all\ }j,k\in\mathbb N,\end{equation*}
which combined with the preceding display implies that $(\nu_j)$ is strong Cauchy in $\mathcal
E^\sigma_g(A,1)$. Therefore, according to Lemma~\ref{lemma-aux},
$(\nu_j)$ converges (both vaguely and) strongly to some
$\nu_0\in\mathcal E^\sigma_g(A,1)$. Having observed from the equality in (\ref{lb}) that the mapping $\nu\mapsto G_{g,\vartheta}(\nu)$ is strongly continuous on $\mathcal E_g$, we thus get
\[G^\sigma_{g,\vartheta}(A,1)\leqslant G_{g,\vartheta}(\nu_0)=\lim_{j\to\infty}\,G_{g,\vartheta}(\nu_j)=
G^\sigma_{g,\vartheta}(A,1),\]
and hence $\nu_0=:\lambda^\sigma_A$ is the required solution to Problem~\ref{pr1}.\end{proof}

\begin{remark}\label{rem-g}If $\sigma=\infty$ and $\vartheta=0$ (no constraint and no external source of energy), then Problem~\ref{pr1} reduces to the problem
\[\inf_{\nu\in\mathcal E_g\cap\breve{\mathfrak M}^+(A,1)}\,\|\nu\|^2_g\quad\bigl({}=1/c_g(A)\bigr).\]
If (and only if) $c_g(A)<\infty$, then this problem is solvable, and its (unique) solution equals $\gamma_A/c_g(A)$, $\gamma_A$ being the $g$-equ\-il\-ibr\-ium measure on $A$, cf.\ Theorem~\ref{th-equi}.
\end{remark}

\subsection{On continuity of $\lambda^\sigma_A$ when $A$ and $\sigma$ vary} Treating the solution $\lambda^\sigma_A$ to Problem~\ref{pr1} as a function of $(A,\sigma)$, we shall now analyze its continuity relative to both the strong and vague topologies on $\mathcal E^+_g$.

\begin{lemma}\label{lequiv} A measure\/ $\lambda\in\mathcal E_g^\sigma(A,1)$
solves Problem\/~{\rm\ref{pr1}} if and only if
\begin{equation}\label{lchar}\langle\lambda+\vartheta,\nu-\lambda\rangle_g\geqslant0\mbox{ \ for all\ }
\nu\in\mathcal E_g^\sigma(A,1).\end{equation}
\end{lemma}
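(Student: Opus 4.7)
The plan is to use the standard convex-analysis characterization of minimizers over a convex set in a pre-Hilbert space, here applied to $\mathcal{E}_g^\sigma(A,1)$ inside $\mathcal{E}_g$. The class $\mathcal{E}_g^\sigma(A,1)$ is convex: if $\nu_1,\nu_2\in\mathcal{E}_g^\sigma(A,1)$ and $t\in[0,1]$, then $(1-t)\nu_1+t\nu_2$ is positive, carried by $A$, of total mass $1$, bounded above by $\sigma$, and of finite $g$-energy. So for any $\lambda,\nu\in\mathcal{E}_g^\sigma(A,1)$ the segment $\lambda+t(\nu-\lambda)$, $t\in[0,1]$, lies in $\mathcal{E}_g^\sigma(A,1)$.

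For necessity, assume $\lambda$ solves Problem~\ref{pr1} and fix $\nu\in\mathcal{E}_g^\sigma(A,1)$. Define $h(t):=G_{g,\vartheta}\bigl(\lambda+t(\nu-\lambda)\bigr)$ for $t\in[0,1]$. Expanding via bilinearity of $\langle\cdot,\cdot\rangle_g$ gives
\[h(t)=\|\lambda\|_g^2+2t\langle\lambda,\nu-\lambda\rangle_g+t^2\|\nu-\lambda\|_g^2+2\langle\lambda,\vartheta\rangle_g+2t\langle\nu-\lambda,\vartheta\rangle_g,\]
a quadratic polynomial in $t$ minimized on $[0,1]$ at $t=0$. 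Hence $h'(0)\geqslant0$, which is exactly $\langle\lambda+\vartheta,\nu-\lambda\rangle_g\geqslant0$, proving (\ref{lchar}).

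For sufficiency, assume (\ref{lchar}) and let $\nu\in\mathcal{E}_g^\sigma(A,1)$ be arbitrary. Writing $\nu=\lambda+(\nu-\lambda)$ and expanding, one has
\[G_{g,\vartheta}(\nu)-G_{g,\vartheta}(\lambda)=\|\nu-\lambda\|_g^2+2\langle\lambda+\vartheta,\nu-\lambda\rangle_g\geqslant\|\nu-\lambda\|_g^2\geqslant0,\]
so $\lambda$ attains the infimum, i.e., solves Problem~\ref{pr1}.

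There is no real obstacle: this is a routine derivative/variational-inequality argument valid in any pre-Hilbert space, and the only non-formal input is the convexity of $\mathcal{E}_g^\sigma(A,1)$, which is immediate from the definitions. (Note that strict positive definiteness of $g$ is not needed for the equivalence itself, only for uniqueness of the minimizer, which has already been recorded.)
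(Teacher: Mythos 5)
Your proof is correct and follows essentially the same route as the paper: you derive the same quadratic expansion $G_{g,\vartheta}(\lambda+t(\nu-\lambda))-G_{g,\vartheta}(\lambda)=2t\langle\lambda+\vartheta,\nu-\lambda\rangle_g+t^2\|\nu-\lambda\|_g^2$, obtain necessity from the one-sided derivative at $t=0$ (the paper phrases this as dividing by $h$ and letting $h\to0$), and sufficiency by evaluating at $t=1$. The remarks on convexity and on strict positive definiteness being needed only for uniqueness are accurate.
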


\begin{proof}By direct calculation, for any $\lambda,\nu\in\mathcal E_g^\sigma(A,1)$ and
$h\in(0,1]$ we obtain
\begin{equation}\label{mainin}G_{g,\vartheta}\bigl(h\nu+(1-h)\lambda\bigr)-G_{g,\vartheta}(\lambda)=2h\langle
\lambda+\vartheta,\nu-\lambda\rangle_g+h^2\|\nu-\lambda\|^2_g.\end{equation}
If $\lambda$ solves Problem~\ref{pr1}, then in view of the convexity of the class $\mathcal E_g^\sigma(A,1)$, the left-hand (hence, the
right-hand) side of (\ref{mainin}) is ${}\geqslant0$, which leads to (\ref{lchar}) by
letting $h\to0$. Conversely, if (\ref{lchar}) holds, then
(\ref{mainin}) with $h=1$ gives $G_{g,\vartheta}(\nu)\geqslant
G_{g,\vartheta}(\lambda)$ for all $\nu\in\mathcal E_g^\sigma(A,1)$, and hence $\lambda$ solves, indeed, Problem~\ref{pr1}.\end{proof}

Let $(A_k)$ and $(\sigma_k)$ be as described in the beginning of Section~\ref{sec-cont}. Since $\mathcal E_g^\sigma(A,1)$ is nonempty by our permanent requirements, so are all the $\mathcal E_g^{\sigma_k}(A_k,1)$, cf.\ (\ref{bel}). Assuming additionally that (\ref{c1'}) holds, we then imply by Theorem~\ref{th-g} that there exists the solution $\lambda^\sigma_A$, resp.\ $\lambda^{\sigma_k}_{A_k}$, to Problem~\ref{pr1} with $\sigma$ and $A$, resp.\ $\sigma_k$ and $A_k$.

\begin{theorem}\label{th-g-cont} Under these assumptions, we have
\begin{align}\label{g-cont1}&\lim_{k}\,G_{g,\vartheta}^{\sigma_k}(A_k,1)=G_{g,\vartheta}^\sigma(A,1),\\
\label{g-cont2}&\lim_{k}\,\|\lambda^{\sigma_k}_{A_k}-\lambda^\sigma_A\|_g=0,\\
\label{g-cont3}&\lambda^{\sigma_k}_{A_k}\to\lambda^\sigma_A\text{ \ vaguely in\ }\mathfrak M(D).\end{align}
\end{theorem}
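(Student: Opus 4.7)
The plan is to mimic the proof of Theorem~\ref{th-g} with the sequence $(\lambda^{\sigma_k}_{A_k})_k$ itself playing the role of a minimizing sequence for the limit problem. The essential observation is that the admissible classes shrink: since $(A_k)$ is decreasing and $(\sigma_k)$ is nonincreasing with $\sigma_k\geqslant\sigma$, we have $\mathcal E^{\sigma_{k+1}}_g(A_{k+1},1)\subset\mathcal E^{\sigma_k}_g(A_k,1)$ and $\mathcal E^\sigma_g(A,1)\subset\bigcap_k\mathcal E^{\sigma_k}_g(A_k,1)$. Hence the numerical sequence $G^{\sigma_k}_{g,\vartheta}(A_k,1)$ is nondecreasing and bounded above by $G^\sigma_{g,\vartheta}(A,1)$, so it tends to some $L\in\mathbb R$ with $L\leqslant G^\sigma_{g,\vartheta}(A,1)$.

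For any $p\geqslant k$, both $\lambda^{\sigma_k}_{A_k}$ and $\lambda^{\sigma_p}_{A_p}$ lie in the convex class $\mathcal E^{\sigma_k}_g(A_k,1)$, so the parallelogram identity in $\mathcal E_g$ (applied exactly as in the proof of Theorem~\ref{th-g}) gives
\[\|\lambda^{\sigma_p}_{A_p}-\lambda^{\sigma_k}_{A_k}\|^2_g\leqslant 2G^{\sigma_p}_{g,\vartheta}(A_p,1)-2G^{\sigma_k}_{g,\vartheta}(A_k,1)\to 0\]
as $k\to\infty$. Thus $(\lambda^{\sigma_k}_{A_k})$ is strong Cauchy in $\mathcal E^{\sigma_1}_g(A_1,1)$, which is strongly complete by Lemma~\ref{lemma-aux} (whose hypothesis (\ref{suff}) is exactly assumption (\ref{c1'}) at level $k=1$). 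Consequently $\lambda^{\sigma_k}_{A_k}\to\lambda_0$ both strongly and vaguely for some $\lambda_0\in\mathcal E^{\sigma_1}_g(A_1,1)$.

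The remaining task, and the main technical obstacle, is to show that the limit $\lambda_0$ belongs to the original admissible class $\mathcal E^\sigma_g(A,1)$. The vague convergence gives easily that $\lambda_0$ is carried by $\bigcap_k A_k=A$ (each $A_k$ being relatively closed in $D$) and, in the constrained case, that $\lambda_0\leqslant\sigma_k$ for every $k$, which yields $\lambda_0\leqslant\sigma$ upon passing to the vague limit $\sigma_k\to\sigma$ in $\sigma_k-\lambda_0\geqslant 0$. Only the mass condition $\lambda_0(A)=1$ requires genuine work, and I would handle the two subcases in (\ref{c1'}) separately by a uniform tightness argument modelled on the proof of Lemma~\ref{lemma-aux}: if $\sigma_1(A_1)<\infty$, then the inequality $\lambda^{\sigma_j}_{A_j}\leqslant\sigma_1$ directly yields tightness via $\lambda^{\sigma_j}_{A_j}(A_j\setminus K)\leqslant\sigma_1(A_1\setminus K)$ for compact $K\subset D$; if $c_g(A_1)<\infty$, I would dominate $\lambda^{\sigma_j}_{A_j}(A_j\setminus K_\ell)\leqslant\|\gamma_\ell\|_g\,\|\lambda^{\sigma_j}_{A_j}\|_g$ via Cauchy--Schwarz using the $g$-equilibrium measure $\gamma_\ell$ of $A_1\setminus K_\ell$ for an exhaustion $(K_\ell)$ of $D$ by compacts, noting that $\|\gamma_\ell\|_g\to 0$ by perfectness of $g$, exactly as in Lemma~\ref{lemma-aux}.

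Once $\lambda_0\in\mathcal E^\sigma_g(A,1)$ is established, the Gauss functional $G_{g,\vartheta}$ is strongly continuous on $\mathcal E_g$ by the identity $G_{g,\vartheta}(\nu)=\|\nu+\vartheta\|^2_g-\|\vartheta\|^2_g$, so $G_{g,\vartheta}(\lambda_0)=L$. The chain
\[L\leqslant G^\sigma_{g,\vartheta}(A,1)\leqslant G_{g,\vartheta}(\lambda_0)=L\]
then forces equality throughout, establishing (\ref{g-cont1}) and identifying $\lambda_0$ with $\lambda^\sigma_A$ by the uniqueness of the solution to Problem~\ref{pr1}. Assertions (\ref{g-cont2}) and (\ref{g-cont3}) are nothing but the strong and vague convergence of $(\lambda^{\sigma_k}_{A_k})$ already secured.
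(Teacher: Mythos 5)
Your proposal is correct and follows essentially the paper's route: the parallelogram identity (there, via Lemma~\ref{lequiv}) gives the strong Cauchy estimate, Lemma~\ref{lemma-aux} supplies strong completeness, and strong continuity of $G_{g,\vartheta}$ closes the argument. One small remark: the ``main technical obstacle'' you flag is illusory---once Lemma~\ref{lemma-aux} yields $\lambda_0\in\mathcal E^{\sigma_1}_g(A_1,1)$, so that $\lambda_0(A_1)=1$, and vague convergence shows $\lambda_0$ is carried by $A\subset A_1$, the equality $\lambda_0(A)=\lambda_0(A_1)=1$ is automatic and no further tightness argument is required (the paper instead applies Lemma~\ref{lemma-aux} at every level $p$, noting that hypothesis (\ref{c1'}) propagates to all $p$ by monotonicity, which delivers the same conclusion directly).
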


\begin{proof}Write $\lambda_k:=\lambda^{\sigma_k}_{A_k}$. It is seen from the monotonicity of $(A_k)$ and $(\sigma_k)$ that
\begin{equation}\label{bel}\mathcal E_g^\sigma(A,1)\subset\mathcal E_g^{\sigma_k}(A_k,1)\subset\mathcal E_g^{\sigma_p}(A_p,1)\text{ \ for all\ }k\geqslant p,\end{equation}
which in view of (\ref{lb}) implies
\begin{equation}\label{f'}-\infty<\lim_{k}\,G_{g,\vartheta}(\lambda_k)\leqslant G_{g,\vartheta}^\sigma(A,1)<\infty.\end{equation}
As $\lambda_k\in\mathcal E_g^{\sigma_p}(A_p,1)$ for all $k\geqslant p$, we get by applying (\ref{lchar}) to $\lambda=\lambda_p$ and $\nu=\lambda_k$
\[\langle\lambda_p+\vartheta,\lambda_k-\lambda_p\rangle_g\geqslant0,\]
which combined with (\ref{mainin}) taken for $h=1$, $\lambda=\lambda_p$, and $\nu=\lambda_k$ gives
\begin{equation*}\label{fund}\|\lambda_k-\lambda_p\|^2_g\leqslant G_{g,\vartheta}(\lambda_k)-G_{g,\vartheta}(\lambda_p).\end{equation*}
This together with (\ref{f'}) yields that for every $p$, $(\lambda_k)_{k\geqslant p}$ is strong Cauchy in $\mathcal E_g^{\sigma_p}(A_p,1)$. By Lemma~\ref{lemma-aux}, there is therefore the unique $\nu_0$ such that
\begin{equation}\label{f}\lambda_k\to\nu_0\text{ \ in the strong and vague topologies on\ }\mathcal E^+_g\end{equation}
and which belongs to $\mathcal E_g^{\sigma_p}(A_p,1)$ for all $p$, and hence to their intersection:
\[\nu_0\in\bigcap_{p}\,\mathcal E_g^{\sigma_p}(A_p,1)\subset\mathcal E_g\cap\breve{\mathfrak M}^+(A,1).\]
Since $\sigma_k-\lambda_k\geqslant0$ and $\sigma_k-\lambda_k\to\sigma-\nu_0$ vaguely whenever $\sigma\ne\infty$, we have $\nu_0\leqslant\sigma$, which together with the last display shows that, actually, $\nu_0\in\mathcal E_g^\sigma(A,1)$. As the map $\nu\mapsto G_{g,\vartheta}(\nu)$ is strongly continuous on $\mathcal E_g$, we obtain from (\ref{f'}) and (\ref{f})
\[G^\sigma_{g,\vartheta}(A,1)\leqslant G_{g,\vartheta}(\nu_0)=\lim_{k}\,G_{g,\vartheta}(\lambda_k)\leqslant G_{g,\vartheta}^\sigma(A,1),\]
which results in (\ref{g-cont1}) and the equality $\nu_0=\lambda^\sigma_A$. Finally, substituting $\nu_0=\lambda^\sigma_A$ into (\ref{f}) gives (\ref{g-cont2}) and (\ref{g-cont3}).
\end{proof}

\begin{remark}Theorems~\ref{th-g} and \ref{th-g-cont} remain valid (with pretty much the same proofs) if Problem~\ref{pr1} is replaced by a more general minimum energy problem
\[\inf_{\nu\in\mathcal E_g^\sigma(A,1)}\,\Bigl[\|\nu\|^2_g+2\int u\,d\nu\Bigr],\]
where either $u=g\chi$ with $\chi\in\mathcal E_g$, or $u:D\to[0,\infty]$ is l.s.c.\end{remark}

\subsection{Variational inequalities for the potential $g\lambda^\sigma_A$} The aim of this section is to establish necessary and sufficient conditions for the solvability of Problem~\ref{pr1} in terms of variational inequalities for the potential $g\lambda^\sigma_A$.\footnote{Assertions similar to Theorem~\ref{desc-th2} can be found e.g.\ in \cite{DFHSZ2,FZ-Pot2}. The first results of this kind have been established in \cite{R,DS} for the logarithmic kernel on the plane.}

\begin{theorem}\label{desc-th2}Fix\/ $\sigma\in\mathfrak C(A)$, and assume that\/ $g\vartheta^-|_A$ is upper bounded. Then any given\/
$\lambda\in\mathcal E^\sigma_g(A,1)$ is the\/ {\rm(}unique\/{\rm)} solution to
Problem\/~{\rm\ref{pr1}} if and only if
\begin{align}\label{desc1}g(\lambda+\vartheta)&\geqslant w\quad(\sigma-\lambda)\mbox{-a.e.},\\
g(\lambda+\vartheta)&\leqslant w\quad\lambda\mbox{-a.e.}\label{desc2}\end{align}
with some\/ $w\in\mathbb R$.\end{theorem}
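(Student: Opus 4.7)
The starting point is the variational characterization of the solution provided by Lemma~\ref{lequiv}: $\lambda\in\mathcal E_g^\sigma(A,1)$ solves Problem~\ref{pr1} iff
\[
\langle\lambda+\vartheta,\nu-\lambda\rangle_g=\int g(\lambda+\vartheta)\,d(\nu-\lambda)\geqslant0\quad\text{for all }\nu\in\mathcal E_g^\sigma(A,1),
\]
so the task is to translate this single ``global'' inequality into the two pointwise inequalities (\ref{desc1})--(\ref{desc2}). The boundedness assumption on $g\vartheta^-|_A$ enters here: combined with $g\lambda\geqslant0$, it yields that $f:=g(\lambda+\vartheta)$ is bounded below n.e.\ on $A$, so the integrals against positive measures carried by $A$ are unambiguously defined in $(-\infty,\infty]$, and the restrictions of $f$ to $A$ are measurable with respect to the absolutely continuous measures $\lambda$ and $\sigma-\lambda$.

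\emph{Sufficiency} is the easy direction. Given (\ref{desc1})--(\ref{desc2}), fix any $\nu\in\mathcal E_g^\sigma(A,1)$ and write the Jordan decomposition $\nu-\lambda=(\nu-\lambda)^+-(\nu-\lambda)^-$. Since $\nu\leqslant\sigma$, one checks on $\{\nu\geqslant\lambda\}$ and on its complement that $(\nu-\lambda)^+\leqslant\sigma-\lambda$ and $(\nu-\lambda)^-\leqslant\lambda$; hence the two perturbation measures are absolutely continuous with respect to $\sigma-\lambda$ and to $\lambda$, respectively. Applying (\ref{desc1}) against $(\nu-\lambda)^+$ and (\ref{desc2}) against $(\nu-\lambda)^-$, and using that $\nu(A)=\lambda(A)=1$ forces $(\nu-\lambda)^+(A)=(\nu-\lambda)^-(A)$, yields
\[
\int f\,d(\nu-\lambda)\geqslant w\bigl[(\nu-\lambda)^+(A)-(\nu-\lambda)^-(A)\bigr]=0,
\]
and Lemma~\ref{lequiv} gives optimality.

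\emph{Necessity} requires producing $w$. Define
\[
M:=\inf\{c\in\mathbb R:\ \lambda(\{f>c\})=0\},\qquad m:=\sup\{c\in\mathbb R:\ (\sigma-\lambda)(\{f<c\})=0\},
\]
i.e.\ the $\lambda$-essential supremum and the $(\sigma-\lambda)$-essential infimum of $f$ on $A$. The claim is that $M\leqslant m$, so that any $w\in[M,m]$ satisfies (\ref{desc1})--(\ref{desc2}) by definition. Suppose, toward a contradiction, that $M>m$; pick $w\in(m,M)$ and set $B_1:=\{f>w\}\cap A$, $B_2:=\{f<w\}\cap A$, so that $\lambda(B_1)>0$ and $(\sigma-\lambda)(B_2)>0$. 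Since $\sigma|_K\in\mathcal E_g^+$ for every compact $K\subset D$, choosing an exhaustion $(K_j)\uparrow A$ and replacing $B_i$ by $B_i\cap K_j$ for $j$ large, we may further assume that $\mathbf1_{B_1}\lambda$ and $\mathbf1_{B_2}(\sigma-\lambda)=\mathbf1_{B_2}\sigma-\mathbf1_{B_2}\lambda$ both lie in $\mathcal E_g$. Then for small $t>0$ the measure
\[
\nu_t:=\lambda-t\,\frac{\mathbf1_{B_1}\lambda}{\lambda(B_1)}+t\,\frac{\mathbf1_{B_2}(\sigma-\lambda)}{(\sigma-\lambda)(B_2)}
\]
belongs to $\mathcal E_g^\sigma(A,1)$ (positivity, the upper bound by $\sigma$, and unit mass are all straightforward). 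Plugging $\nu_t$ into the inequality from Lemma~\ref{lequiv} and dividing by $t$ produces
\[
\frac{1}{(\sigma-\lambda)(B_2)}\int_{B_2}f\,d(\sigma-\lambda)\;\geqslant\;\frac{1}{\lambda(B_1)}\int_{B_1}f\,d\lambda,
\]
which contradicts the choices $f<w$ on $B_2$ and $f>w$ on $B_1$.

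The main obstacle is the necessity argument, specifically the construction of admissible variations: the perturbation direction $\mathbf1_{B_2}(\sigma-\lambda)$ need not have finite $g$-energy because $\sigma$ itself may have $E_g(\sigma)=\infty$. Handling this by localizing to compact sets on which $\sigma$ does have finite energy (using the standing assumption (\ref{adm-c})) is the key technical point; once that localization is in place, the variational comparison is routine.
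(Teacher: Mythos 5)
Your proof is correct and takes essentially the same route as the paper's: starting from the variational inequality of Lemma~\ref{lequiv}, you take $w$ to be (in effect) the $(\sigma-\lambda)$-essential infimum of $g(\lambda+\vartheta)$ and obtain the necessity direction by contradiction, moving a small amount of mass from a compact piece of $\{g(\lambda+\vartheta)>w\}$ to a compact piece of $\{g(\lambda+\vartheta)<w\}$ --- with the compact localization (needed precisely because $E_g(\sigma)$ may be infinite) justified by the standing assumption~(\ref{adm-c}), exactly as in the paper. The only cosmetic differences are that your sufficiency argument uses the Jordan decomposition of $\nu-\lambda$ whereas the paper decomposes the domain by the level sets $A^{\pm}(w)$ of $g(\lambda+\vartheta)$, and that you run a one-parameter family $\nu_t$ where the paper constructs a single competitor $\omega$.
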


\begin{proof} Since both $\lambda$ and $\vartheta$ are extendible, $g(\lambda+\vartheta)$ is finite n.e.\ on $D$, cf.\ Section~\ref{sec-gr}. Also note that any Borel set $Q\subset D$ with $c_g(Q)=0$ (equivalently, $c_\alpha(Q)=0$) cannot carry either of $\lambda$ and $\sigma$, the trace of $\sigma$ on any compact subset of $D$ being of finite $\alpha$-Riesz (equivalently, $\alpha$-Green) energy by (\ref{adm-c}).

For any $c\in\mathbb R$ write
\[A^+(c):=\bigl\{x\in A:\ g(\lambda+\vartheta)(x)>c\bigr\},\quad
A^-(c):=\bigl\{x\in A:\ g(\lambda+\vartheta)(x)<c\bigr\}.\]
Assume first that $\lambda$ solves Problem~\ref{pr1}. Then (\ref{desc1}) holds with $w:=L$, where
\[L:=\sup\,\bigl\{q\in\mathbb R: \ g(\lambda+\vartheta)\geqslant q\quad(\sigma-\lambda)\mbox{-a.e.}\bigr\}.\]
In turn, (\ref{desc1}) with $w=L$ implies $L<\infty$, because $g(\lambda+\vartheta)<\infty$ n.e.\ on
$D$, hence $(\sigma-\lambda)$-a.e.\ (see above), while $(\sigma-\lambda)(A)=\sigma(A)-1>0$. Also note that $L>-\infty$ since $g(\lambda+\vartheta)$
is lower bounded on $A$ by assumption.

We next proceed by establishing (\ref{desc2}) with $w:=L$. Suppose to the contrary that this fails, i.e.\ $\lambda(A^+(L))>0$. Since $g(\lambda+\vartheta)$
is Borel measurable, one can choose $c'\in(L,\infty)$ so that
$\lambda(A^+(c'))>0$. At the same time, as $c'>L$, it follows from the definition of $L$ that $(\sigma-\lambda)(A^-(c'))>0$. Therefore, there exist compact sets
$K_1\subset A^+(c')$ and $K_2\subset A^-(c')$ such that
\[0<\lambda(K_1)<(\sigma-\lambda)(K_2).\]
Write
\[\omega:=\lambda-\lambda|_{K_1}+b(\sigma-\lambda)|_{K_2},\text{ \ where \ }b:=\frac{\lambda(K_1)}{(\sigma-\lambda)(K_2)}\in(0,1).\]
Straightforward verification gives $\omega(A)=1$ and $\omega\leqslant\sigma$, and hence altogether
$\omega\in\mathcal E^\sigma_g(A,1)$. On the other hand,
\begin{align*}
&\langle\lambda+\vartheta,\omega-\lambda\rangle_g=\int\bigl[g(\lambda+\vartheta)-c'\bigr]\,d(\omega-\lambda)\\&{}=-\int\bigl[g(\lambda+\vartheta)-c'\bigr]\,d\lambda|_{K_1}
+b\int\bigl[g(\lambda+\vartheta)-c'\bigr]\,d(\sigma-\lambda)|_{K_2}<0,\end{align*}
which is impossible by Lemma~\ref{lequiv} applied to $\lambda$ and $\nu=\omega$. The contradiction obtained establishes
(\ref{desc2}).

Conversely, let (\ref{desc1}) and (\ref{desc2}) both hold with some $w\in\mathbb R$. Then
$\lambda(A^+(w))=0$ and
$(\sigma-\lambda)(A^-(w))=0$. For any $\nu\in\mathcal E^\sigma_g(A,1)$, we therefore have
\begin{align*}&\langle\lambda+\vartheta,\nu-\lambda\rangle_g=\int\bigl[g(\lambda+\vartheta)-w\bigr]\,d(\nu-\lambda)\\
&{}=\int\bigl[g(\lambda+\vartheta)-w\bigr]\,d\nu|_{A^+(w)}+\int\bigl[g(\lambda+\vartheta)-w\bigr]\,d(\nu-\sigma)|_{A^-(w)}\geqslant0,\end{align*}
which implies by Lemma~\ref{lequiv} that $\lambda$ solves, indeed,
Problem~\ref{pr1}.\end{proof}

\section{Auxiliary assertions}\label{sec-aux-as}

Throughout this section, $F=D^c$ is not $\alpha$-thin at infinity.

\begin{lemma}\label{laux1}For any\/ $\nu\in\mathcal E^\sigma_g(A,1)$,
\begin{align}\label{1}&\nu-\nu'\in\ddot{\mathcal E}_\alpha^\sigma(\mathbf A,\mathbf1),\\
\label{2}&\|\nu-\nu'\|^\cdot_\alpha=\|\nu\|_g,\\
\label{3}&\langle\nu-\nu',\vartheta-\vartheta'\rangle_\alpha^\cdot=\langle\nu,\vartheta\rangle_g,\\
\label{4}&\dot{G}_{\alpha,\vartheta-\vartheta'}(\nu-\nu')=G_{g,\vartheta}(\nu),\end{align}
and therefore
\begin{equation}\label{5}G_{g,\vartheta}^\sigma(A,1)\geqslant\dot{G}^\sigma_{\alpha,\vartheta-\vartheta'}(\mathbf A,\mathbf1).\end{equation}
\end{lemma}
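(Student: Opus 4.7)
The proof breaks into two algebraic parts (identities (\ref{2})--(\ref{4})) and one more delicate part (membership (\ref{1}) in $\ddot{\mathcal E}_\alpha^\sigma(\mathbf A,\mathbf1)$, which is the main obstacle), after which (\ref{5}) is a direct consequence.

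\textbf{Identities (\ref{2})--(\ref{4}).} Identity (\ref{2}) is just (\ref{2.8}) of Theorem~\ref{thm} applied to $\nu$, since $\nu\in\mathcal E_g^\sigma(A,1)\subset\mathcal E_g$ is extendible. For (\ref{3}), note that $\nu+\vartheta\in\breve{\mathfrak M}(D)$ and $E_g(\nu+\vartheta)<\infty$ (as $\nu,\vartheta\in\mathcal E_g$), while balayage is linear so $(\nu+\vartheta)'=\nu'+\vartheta'$. Applying (\ref{2.8}) to $\nu+\vartheta$ and expanding both sides by bilinearity gives
\[\|\nu\|_g^2+2\langle\nu,\vartheta\rangle_g+\|\vartheta\|_g^2=\|\nu-\nu'\|_\alpha^{\cdot2}+2\langle\nu-\nu',\vartheta-\vartheta'\rangle_\alpha^\cdot+\|\vartheta-\vartheta'\|_\alpha^{\cdot2},\]
and using (\ref{2}) for $\nu$ and (again) for $\vartheta$ cancels the squared norms, leaving (\ref{3}). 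Identity (\ref{4}) then follows immediately by plugging (\ref{2}) and (\ref{3}) into the definition (\ref{defG2}) of $\dot G_{\alpha,\vartheta-\vartheta'}$ and comparing with the definition of $G_{g,\vartheta}$.

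\textbf{Assertion (\ref{1}).} First, $\nu$ is carried by $A\subset D$ and $\nu'$ is carried by $F=D^c$, so they are mutually singular; hence $(\nu-\nu')^+=\nu$ and $(\nu-\nu')^-=\nu'$. Since $F$ is not $\alpha$-thin at infinity, Theorem~\ref{bal-mass-th} combined with (\ref{m-est}) gives $\nu'(F)=\nu(\mathbb R^n)=1$, while $\nu\leqslant\sigma$ and $\nu(A)=1$. Together with Theorem~\ref{thm}, this places $\nu-\nu'$ in $\dot{\mathcal E}_\alpha^\sigma(\mathbf A,\mathbf 1)$. The real work is the approximation condition in (\ref{def_E2}): for each $\varepsilon>0$ I want $\mu_\varepsilon\in\mathcal E_\alpha(\mathbf A,\mathbf 1)$ with $\mu_\varepsilon^+\leqslant(1+\varepsilon)\sigma$ and $\|\mu_\varepsilon-(\nu-\nu')\|_\alpha^\cdot<\varepsilon$. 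Exhausting $D$ by an increasing sequence of compact sets $K_j\subset D$, I set $\nu_j:=\nu|_{K_j}/\nu(K_j)$ for $j$ large enough that $\nu(K_j)>0$, and take $\mu_\varepsilon:=\nu_j-\nu_j'$ for $j$ to be fixed. Since $\nu_j$ is bounded with support compact in $D$, hypothesis (\ref{dist'}) of Lemma~\ref{eq-r-g} holds, so $\nu_j\in\mathcal E_\alpha^+$ (using $\nu_j\leqslant\nu/\nu(K_j)\in\mathcal E_g^+$ and (\ref{10})); consequently $\nu_j'\in\mathcal E_\alpha^+$, and the non-$\alpha$-thinness of $F$ gives $\nu_j'(F)=1$. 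Clearly $\nu_j\leqslant\nu/\nu(K_j)\leqslant(1+\varepsilon)\sigma$ once $\nu(K_j)\geqslant 1/(1+\varepsilon)$. Finally, $\nu_j-\nu\in\mathcal E_g\cap\breve{\mathfrak M}(D)$, so by the identity already proved in (\ref{2}) (applied to the signed measure $\nu_j-\nu$, which is how Theorem~\ref{thm} is stated),
\[\|\mu_\varepsilon-(\nu-\nu')\|_\alpha^\cdot=\|(\nu_j-\nu)-(\nu_j-\nu)'\|_\alpha^\cdot=\|\nu_j-\nu\|_g.\]
Since $\|\nu|_{D\setminus K_j}\|_g^2=E_g(\nu|_{D\setminus K_j})\to 0$ as $K_j\nearrow D$ (by monotone/dominated convergence in the integral $\int g\nu\,d\nu|_{D\setminus K_j}$) and $\|\nu_j-\nu|_{K_j}\|_g=|1/\nu(K_j)-1|\cdot\|\nu|_{K_j}\|_g\to 0$, the right-hand side becomes $<\varepsilon$ for $j$ large, concluding (\ref{1}).

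\textbf{Estimate (\ref{5}).} Taking the infimum over $\nu\in\mathcal E_g^\sigma(A,1)$ in (\ref{4}) and using that by (\ref{1}) every such $\nu-\nu'$ is an admissible measure in Problem~\ref{pr3},
\[G_{g,\vartheta}^\sigma(A,1)=\inf_{\nu\in\mathcal E_g^\sigma(A,1)}\dot G_{\alpha,\vartheta-\vartheta'}(\nu-\nu')\geqslant\inf_{\mu\in\ddot{\mathcal E}_\alpha^\sigma(\mathbf A,\mathbf 1)}\dot G_{\alpha,\vartheta-\vartheta'}(\mu)=\dot G_{\alpha,\vartheta-\vartheta'}^\sigma(\mathbf A,\mathbf 1),\]
which is (\ref{5}).
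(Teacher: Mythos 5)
Your proof is correct and follows essentially the same route as the paper's: truncating $\nu$ to compact subsets, renormalizing, and invoking Theorem~\ref{thm} to transfer the $\mathcal E_g$-convergence of the truncations to the weak-energy convergence of the associated differences $\nu_j-\nu_j'$. The only real deviation is in how you establish $\|\nu_j-\nu\|_g\to0$: the paper shows $(\nu|_{K_k})$ is strong Cauchy via the monotonicity $g(\nu|_{K_k})\leqslant g(\nu|_{K_p})$ and then invokes the perfectness of $g$ to pass from "Cauchy + vague limit $\nu$" to strong convergence, whereas you estimate directly $E_g(\nu|_{D\setminus K_j})\leqslant\int_{D\setminus K_j}g\nu\,d\nu\to0$ by dominated convergence and the triangle inequality. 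Your variant is a bit more elementary (it bypasses perfectness at this particular spot), but the decomposition, the appeal to Lemma~\ref{eq-r-g} and (\ref{10}) for $\nu_j\in\mathcal E_\alpha^+$, the use of Theorem~\ref{bal-mass-th} for $\nu'(\mathbb R^n)=1$, and the polarization argument deriving (\ref{3}) from (\ref{2}) applied to $\nu+\vartheta$ all coincide with the paper's proof.
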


\begin{proof} Theorems~\ref{bal-mass-th} and \ref{thm} applied to any given $\nu\in\mathcal E^\sigma_g(A,1)$ show that both $\nu-\nu'\in\dot{\mathcal E}_\alpha^\sigma(\mathbf A,\mathbf1)$ and (\ref{2}) hold. Also observe that in order to establish (\ref{1}), it is enough to construct a sequence $(\nu_k)\subset\mathcal E_\alpha\cap\breve{\mathfrak M}^+(A,1)$ with the properties
\begin{align}\label{ap1}&\nu_k\leqslant(1+k^{-1})\sigma\text{ \ for all\ }k,\\
&\lim_{k\to\infty}\,\|(\nu_k-\nu_k')-(\nu-\nu')\|^{\cdot}_\alpha=0,\label{ap2}\end{align}
because then $\nu-\nu'\in\dot{\mathcal E}_\alpha^\sigma(\mathbf A,\mathbf1)$ will be approximated in the weak energy norm on $\dot{\mathcal E}_\alpha$ by $\nu_k-\nu_k'\in\mathcal E_\alpha^{(1+\frac1k)\sigma}(\mathbf A,\mathbf1)$, exactly as required in definition (\ref{def_E2}).

Choose an increasing sequence $(K_k)$ of compact sets with the union $A$, and write
$\tilde{\nu}_k:=\nu|_{K_k}$. Since
$\tilde{\nu}_k(K_k)\uparrow\nu(A)=1$, there is no loss of generality in assuming
\begin{equation}\label{tilde}\frac{k}{k+1}\leqslant\tilde{\nu}_k(K_k)\leqslant1\text{ \ for all\ }k.\end{equation}
It follows from the definition of $\tilde{\nu}_k$ that
\begin{align}\label{appr1}&\tilde{\nu}_k\to\nu\text{ \ vaguely in $\mathfrak M(D)$},\\
\label{upar}&\lim_{k\to\infty}\,\|\tilde{\nu}_k\|_g=\|\nu\|_g,\end{align}
the non-trivial part of (\ref{upar}) being seen from (\ref{appr1}) in view of the lower semicontinuity of $E_g(\cdot)$ on $\mathfrak M^+(D)$.
Furthermore, for all $p\geqslant k$ we have $g\tilde{\nu}_k\leqslant g\tilde{\nu}_p$, hence
\[\langle\tilde{\nu}_k,\tilde{\nu}_p\rangle_g\geqslant\|\tilde{\nu}_k\|^2_g,\] and therefore
\[\|\tilde{\nu}_k-\tilde{\nu}_p\|^2_g\leqslant\|\tilde{\nu}_p\|^2_g-\|\tilde{\nu}_k\|^2_g,\]
which together with (\ref{upar}) proves that $(\tilde{\nu}_k)$ is a strong Cauchy
sequence in $\mathcal E_g$. Since the kernel $g$ is perfect, (\ref{appr1}) implies that $\tilde{\nu}_k\to\nu$ strongly in
$\mathcal E_g$. Having written
\begin{equation}\label{nyu}\nu_k:=\frac{\tilde{\nu}_k}{\tilde{\nu}_k(K_k)},\end{equation}
we obtain from this and (\ref{tilde})
\[\lim_{k\to\infty}\,\|\nu_k-\nu\|_g=0.\]
Applying Theorem~\ref{thm} to the bounded, hence extendible measure $\nu_k-\nu\in\mathcal E_g$ gives
\[0=\lim_{k\to\infty}\,\|\nu_k-\nu\|_g=\lim_{k\to\infty}\,\|(\nu_k-\nu)-(\nu_k'-\nu')\|^\cdot_\alpha,\]
which is (\ref{ap2}). As $S^{\nu_k}_D$ is compact, $E_g(\nu_k)<\infty$ implies $E_\alpha(\nu_k)<\infty$, cf.\ Lemma~\ref{eq-r-g}; hence, $(\nu_k)\subset\mathcal E_\alpha\cap\breve{\mathfrak M}^+(A,1)$. To complete the proof of (\ref{1}), it remains to observe that (\ref{ap1}) follows from $\tilde{\nu}_k\leqslant\nu\leqslant\sigma$, (\ref{tilde}), and (\ref{nyu}).

Finally, comparing
\[\|(\nu-\nu')+(\vartheta-\vartheta')\|^{\cdot2}_\alpha=\|\nu-\nu'\|^{\cdot2}_\alpha+\|\vartheta-\vartheta'\|^{\cdot2}_\alpha+
2\langle\nu-\nu',\vartheta-\vartheta'\rangle_\alpha^\cdot\]
with \[\|\nu+\vartheta\|^2_g=\|\nu\|^2_g+\|\vartheta\|^2_g+2\langle\nu,\vartheta\rangle_g\] and noting that, according to Theorem~\ref{thm},
\[\|(\nu-\nu')+(\vartheta-\vartheta')\|^\cdot_\alpha=\|\nu+\vartheta\|_g,\quad\|\nu-\nu'\|^\cdot_\alpha=\|\nu\|_g,\quad\|\vartheta-\vartheta'\|^\cdot_\alpha=\|\vartheta\|_g,\]
we obtain (\ref{3}). Combining (\ref{2}) and (\ref{3}) gives (\ref{4}).\end{proof}

\begin{lemma}\label{l-sep2}Assume that the separation condition\/ {\rm(\ref{dist})} holds. Then
\begin{equation}\label{eq-sep2}G^\sigma_{\alpha,\vartheta-\vartheta'}(\mathbf A,\mathbf1)=G^\sigma_{g,\vartheta}(A,1),\end{equation}
$G^\sigma_{\alpha,\vartheta-\vartheta'}(\mathbf A,\mathbf1)$ being defined by\/ {\rm(\ref{e-pr4})}. Moreover, a solution\/ $\lambda^\sigma_{\mathbf A}$ to Problem\/~{\rm\ref{pr4}} exists if and only if there is a solution\/ $\lambda^\sigma_A$ to Problem\/~{\rm\ref{pr1}}, and in the affirmative case
\begin{equation*}\lambda^\sigma_{\mathbf A}=\lambda^\sigma_A-(\lambda^\sigma_A)'.\end{equation*}
\end{lemma}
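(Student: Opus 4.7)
The plan is to use the separation condition together with the correspondence $\nu \mapsto \nu - \nu'$ to build a bijection (modulo the choice of negative part) between the admissible classes of Problems~\ref{pr1} and \ref{pr4}. The key technical step will be a one-line orthogonal decomposition of $G_{\alpha,\vartheta-\vartheta'}(\mu)$ based on the fact that the potentials of $\nu-\nu'$ and $\vartheta-\vartheta'$ vanish n.e.\ on $F$.

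For the inequality $G^\sigma_{\alpha,\vartheta-\vartheta'}(\mathbf A,\mathbf1) \leq G^\sigma_{g,\vartheta}(A,1)$, I would fix $\nu \in \mathcal E_g^\sigma(A,1)$. Under (\ref{dist}), condition (\ref{dist'}) holds automatically, so Lemma~\ref{eq-r-g} gives $\nu \in \mathcal E_\alpha$. Since $F$ is not $\alpha$-thin at infinity, Theorem~\ref{bal-mass-th} combined with (\ref{m-est}) yields $\nu'(F) = \nu(A) = 1$, and the disjointness of the (closed) supports of $\nu$ and $\nu'$ gives $(\nu-\nu')^+ = \nu$, $(\nu-\nu')^- = \nu'$; hence $\nu-\nu' \in \mathcal E_\alpha^\sigma(\mathbf A,\mathbf1)$. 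Corollary~\ref{cor} yields $\|\nu-\nu'\|_\alpha^2 = E_g(\nu)$, and the same computation as in Lemma~\ref{laux1}, now with standard energy in place of weak (justified by (\ref{dot0}), as $\nu-\nu', \vartheta-\vartheta' \in \mathcal E_\alpha$), produces $G_{\alpha,\vartheta-\vartheta'}(\nu-\nu') = G_{g,\vartheta}(\nu)$. Taking infimum over $\nu$ gives one half of (\ref{eq-sep2}).

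For the reverse inequality, fix $\mu \in \mathcal E_\alpha^\sigma(\mathbf A,\mathbf1)$ and set $\nu := \mu^+$. Then $\nu$ is bounded with $S_D^\nu \subset A$, so Lemma~\ref{eq-r-g} again yields $\nu \in \mathcal E_g^\sigma(A,1)$. The balayage identity (\ref{bal-eq}) gives $\kappa_\alpha(\nu - \nu') = 0$ n.e.\ on $F$, and applied to $\vartheta^\pm$ it gives $\kappa_\alpha(\vartheta - \vartheta') = 0$ n.e.\ on $F$. The signed measure $\tau := \nu' - \mu^-$ is carried by $F$ with finite $\alpha$-Riesz energy, so $\tau^\pm$ do not charge sets of zero $\alpha$-capacity; hence by Fubini,
\begin{equation*}
\langle \nu - \nu', \tau\rangle_\alpha = \int \kappa_\alpha(\nu-\nu')\,d\tau = 0,\qquad \langle \vartheta - \vartheta', \tau\rangle_\alpha = 0.
\end{equation*}
Writing $\mu = (\nu - \nu') + \tau$ and expanding $\|\cdot\|_\alpha^2$ and $\langle\cdot,\vartheta-\vartheta'\rangle_\alpha$ gives the decomposition
\begin{equation*}
G_{\alpha,\vartheta-\vartheta'}(\mu) = G_{\alpha,\vartheta-\vartheta'}(\nu-\nu') + \|\tau\|_\alpha^2 = G_{g,\vartheta}(\nu) + \|\nu' - \mu^-\|_\alpha^2 \geq G^\sigma_{g,\vartheta}(A,1).
\end{equation*}
Taking infimum over $\mu$ completes (\ref{eq-sep2}).

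The correspondence of solutions now follows: if $\lambda^\sigma_A$ solves Problem~\ref{pr1}, the first paragraph shows $\lambda^\sigma_A - (\lambda^\sigma_A)'$ attains $G^\sigma_{\alpha,\vartheta-\vartheta'}(\mathbf A,\mathbf1)$, hence solves Problem~\ref{pr4}. Conversely, if $\mu$ solves Problem~\ref{pr4}, the displayed decomposition forces both $G_{g,\vartheta}(\mu^+) = G^\sigma_{g,\vartheta}(A,1)$ (so $\mu^+$ solves Problem~\ref{pr1} and equals $\lambda^\sigma_A$) and $\|\mu^- - (\mu^+)'\|_\alpha = 0$, whence $\mu = \lambda^\sigma_A - (\lambda^\sigma_A)'$. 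The main technical obstacle is the clean justification of the two orthogonality relations for $\tau$; all ingredients (the n.e.\ balayage identity, absolute continuity of finite-energy measures with respect to $\alpha$-capacity, Fubini) are classical, but must be strung together without appealing to the weak-energy framework used in Lemma~\ref{laux1}.
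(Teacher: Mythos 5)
Your proof is correct, and it takes a partly different route from the paper's. For the inequality $G^\sigma_{\alpha,\vartheta-\vartheta'}(\mathbf A,\mathbf1)\leqslant G^\sigma_{g,\vartheta}(A,1)$ the paper does not argue directly in the standard-energy framework; it instead combines the already-established weak-energy facts (\ref{ext2}) and (\ref{5}), which in turn rest on Lemma~\ref{laux1}. You rederive this direction self-containedly via Lemma~\ref{eq-r-g}, Theorem~\ref{bal-mass-th} with (\ref{m-est}), and Corollary~\ref{cor}, avoiding the weak-energy detour entirely. For the reverse inequality the paper invokes the projection property (\ref{proj}) to get $\|\mu^+-(\mu^+)'\|_\alpha\leqslant\|\mu\|_\alpha$ with equality precisely when $\mu^-=(\mu^+)'$, together with the display (\ref{7}) for the cross term; you achieve the same thing by writing $\mu=(\mu^+-(\mu^+)')+\tau$ with $\tau:=(\mu^+)'-\mu^-$ and verifying the two orthogonality relations $\langle\mu^+-(\mu^+)',\tau\rangle_\alpha=0$ and $\langle\vartheta-\vartheta',\tau\rangle_\alpha=0$ from the n.e.\ vanishing of potentials on $F$ plus absolute continuity and Fubini. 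This is in effect an explicit re-proof of the relevant case of (\ref{proj}), so the underlying mechanism is the same, but your decomposition makes the characterization of equality (hence the uniqueness $\lambda^\sigma_{\mathbf A}=\lambda^\sigma_A-(\lambda^\sigma_A)'$) immediate and keeps the entire argument inside the standard-energy pre-Hilbert space $\mathcal E_\alpha$. The only point worth flagging: when you deduce $\nu'(F)=\nu(A)=1$ you are tacitly invoking the standing hypothesis of Section~\ref{sec-aux-as} that $D^c$ is not $\alpha$-thin at infinity; that is indeed in force, but it is worth naming it explicitly since the lemma statement itself does not repeat it.
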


\begin{proof} As seen from (\ref{ext2}) and (\ref{5}), (\ref{eq-sep2}) will follow if we establish
\[G^\sigma_{\alpha,\vartheta-\vartheta'}(\mathbf A,\mathbf1)\geqslant G_{g,\vartheta}^\sigma(A,1).\]
For any fixed $\mu\in\mathcal E_\alpha^\sigma(\mathbf A,\mathbf1)$ we obtain from (\ref{proj}) and Corollary~\ref{cor}
\begin{equation}\label{6}\|\mu^+\|_g=\|\mu^+-(\mu^+)'\|_\alpha\leqslant\|\mu\|_\alpha,\end{equation}
where equality prevails in the inequality if and only if $\mu^-=(\mu^+)'$. Furthermore, in view of the absolute continuity of $\mu$,
\begin{equation}\label{7}\int g\vartheta\,d\mu^+=\int\kappa_\alpha(\vartheta-\vartheta')\,d\mu^+=
\int\kappa_\alpha(\vartheta-\vartheta')\,d\mu,\end{equation}
the former equality being valid by (\ref{3.4}) and the latter by $\kappa_\alpha\vartheta=\kappa_\alpha\vartheta'$ n.e.\ on $F$, cf.\ (\ref{bal-eq}). Since obviously $\mu^+\in\mathcal E_g^\sigma(A,1)$, we get from the last two displays
\begin{align}\label{8}G_{\alpha,\vartheta-\vartheta'}(\mu)&=\|\mu\|^2_\alpha+2\langle\mu,\vartheta-\vartheta'\rangle_\alpha\geqslant\|\mu^+\|_g^2+2\langle\mu^+,\vartheta\rangle_g\\
&{}=G_{g,\vartheta}(\mu^+)\geqslant G_{g,\vartheta}^\sigma(A,1),\notag\end{align}
which results in the required inequality by varying $\mu$ over $\mathcal E_\alpha^\sigma(\mathbf A,\mathbf1)$.

To establish the remaining assertion of the lemma, suppose first that $\lambda^\sigma_{\mathbf A}$ solves Problem~\ref{pr4}. Substituting $\lambda^\sigma_{\mathbf A}$ in place of $\mu$ into (\ref{8}) and then combining this with (\ref{eq-sep2}), we see that in fact equality prevails in either of the two inequalities. Problem~\ref{pr1} has therefore
a solution $\lambda^\sigma_A$; and moreover
\[(\lambda^\sigma_{\mathbf A})^+=\lambda^\sigma_A\text{ \ and \ }(\lambda^\sigma_{\mathbf A})^-=(\lambda^\sigma_A)'.\]
As for the last equality, see the discussion around (\ref{6}), referring to (\ref{proj}).

Conversely, suppose now that $\lambda^\sigma_A$ solves Problem~\ref{pr1}. Then $\mu_0:=\lambda^\sigma_A-(\lambda^\sigma_A)'$ belongs to $\mathcal E_\alpha^\sigma(\mathbf A,\mathbf1)$, which is seen by combining (\ref{1}) with (\ref{ext1}), and (\ref{8}) holds with equalities in place of both the inequalities. Thus $G_{\alpha,\vartheta-\vartheta'}(\mu_0)=G_{g,\vartheta}^\sigma(A,1)$, which together with (\ref{eq-sep2}) shows that $\mu_0$ thus defined solves Problem~\ref{pr4}.
\end{proof}

\begin{remark}The latter assertion in Lemma~\ref{l-sep2} fails in general once the separation condition (\ref{dist}) is dropped. For $\sigma=\infty$ and $\vartheta=0$, this can be seen from Remark~\ref{rem-g} and \cite[Example~10.1]{FZ-Pot2}, quoted in Section~\ref{sec:intr} above.\end{remark}

\begin{lemma}\label{l-ineq}If\/ {\rm(\ref{suff})} holds, then
\begin{equation*}\dot{G}^\sigma_{\alpha,\vartheta-\vartheta'}(\mathbf A,\mathbf1)=G^\sigma_{g,\vartheta}(A,1).\end{equation*}
\end{lemma}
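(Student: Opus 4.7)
\emph{Plan.} The nontrivial direction is $\dot{G}^\sigma_{\alpha,\vartheta-\vartheta'}(\mathbf A,\mathbf1)\geqslant G^\sigma_{g,\vartheta}(A,1)$, since the reverse inequality is already contained in (\ref{5}). The strategy is to pull each admissible $\mu\in\ddot{\mathcal E}_\alpha^\sigma(\mathbf A,\mathbf1)$ back to an approximation $\mu_\varepsilon\in\mathcal E_\alpha(\mathbf A,\mathbf1)$ afforded by the defining property (\ref{def_E2}), transfer the Gauss integral estimate to a member of $\mathcal E_g^{(1+\varepsilon)\sigma}(A,1)$ via the mapping $\mu_\varepsilon\mapsto\mu_\varepsilon^+$, and relax the inflated constraint back to $\sigma$ by invoking the continuity result of Theorem~\ref{th-g-cont}.

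\emph{Transfer.} Fix $\mu\in\ddot{\mathcal E}_\alpha^\sigma(\mathbf A,\mathbf1)$. By (\ref{def_E2}), for each $\varepsilon>0$ there is $\mu_\varepsilon\in\mathcal E_\alpha(\mathbf A,\mathbf1)$ with $\mu_\varepsilon^+\leqslant(1+\varepsilon)\sigma$ and $\|\mu_\varepsilon-\mu\|_\alpha^\cdot<\varepsilon$. Since $E_\alpha(\mu_\varepsilon^+)<\infty$ entails $E_g(\mu_\varepsilon^+)<\infty$, we have $\mu_\varepsilon^+\in\mathcal E_g^{(1+\varepsilon)\sigma}(A,1)$. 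The key computation in the proof of Lemma~\ref{l-sep2} — specifically (\ref{6}), (\ref{7}), (\ref{8}) — proceeds without using the separation condition (\ref{dist}), and therefore yields
\[\dot{G}_{\alpha,\vartheta-\vartheta'}(\mu_\varepsilon)=G_{\alpha,\vartheta-\vartheta'}(\mu_\varepsilon)\geqslant G_{g,\vartheta}(\mu_\varepsilon^+)\geqslant G_{g,\vartheta}^{(1+\varepsilon)\sigma}(A,1),\]
where the first equality is (\ref{rest}), the projection inequality (\ref{proj}) combined with Corollary~\ref{cor} handles the energy term, and (\ref{3.4}) together with (\ref{bal-eq}) handles the external field term.

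\emph{Passage to the limit $\varepsilon\to0$.} The map $\nu\mapsto\dot{G}_{\alpha,\vartheta-\vartheta'}(\nu)=\|\nu+(\vartheta-\vartheta')\|_\alpha^{\cdot2}-\|\vartheta-\vartheta'\|_\alpha^{\cdot2}$ is continuous on $\dot{\mathcal E}_\alpha$ in the weak energy norm, so $\dot{G}_{\alpha,\vartheta-\vartheta'}(\mu_\varepsilon)\to\dot{G}_{\alpha,\vartheta-\vartheta'}(\mu)$. For the right-hand side, if $\sigma=\infty$ there is nothing to check; otherwise, apply Theorem~\ref{th-g-cont} with the constant sequence $A_k\equiv A$ and constraints $\sigma_k:=(1+\varepsilon_k)\sigma$ for some $\varepsilon_k\downarrow0$, noting that hypothesis (\ref{c1'}) follows from (\ref{suff}) because $(1+\varepsilon_1)\sigma(A)<\infty$ whenever $\sigma(A)<\infty$. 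This gives $G_{g,\vartheta}^{(1+\varepsilon_k)\sigma}(A,1)\to G_{g,\vartheta}^\sigma(A,1)$. Hence $\dot{G}_{\alpha,\vartheta-\vartheta'}(\mu)\geqslant G_{g,\vartheta}^\sigma(A,1)$, and taking the infimum over $\mu\in\ddot{\mathcal E}_\alpha^\sigma(\mathbf A,\mathbf1)$ completes the proof.

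\emph{Main obstacle.} The delicate point is that $\mathcal E_\alpha(\mathbf A,\mathbf1)$ may be far too thin to approximate every $\mu\in\dot{\mathcal E}_\alpha^\sigma(\mathbf A,\mathbf1)$ under the original constraint $\sigma$; the approximation guaranteed by (\ref{def_E2}) is only available with the \emph{relaxed} constraint $(1+\varepsilon)\sigma$. One must therefore couple the weak-energy-norm continuity of $\dot{G}_{\alpha,\vartheta-\vartheta'}$ with the constraint-inflation continuity of Problem~\ref{pr1}. The latter is exactly Theorem~\ref{th-g-cont}, which is why the hypothesis (\ref{suff}) — guaranteeing (\ref{c1'}) there — enters in a non-negotiable way.
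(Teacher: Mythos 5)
Your proof is correct and follows essentially the same route as the paper's: both start from the relaxed approximants $\mu_\varepsilon\in\mathcal E_\alpha^{(1+\varepsilon)\sigma}(\mathbf A,\mathbf1)$ supplied by (\ref{def_E2}), use the transfer inequalities (\ref{6})--(\ref{8}) to obtain $G_{g,\vartheta}(\mu_\varepsilon^+)\leqslant G_{\alpha,\vartheta-\vartheta'}(\mu_\varepsilon)$, and then invoke Theorem~\ref{th-g-cont} with $A_k\equiv A$, $\sigma_k=(1+\varepsilon_k)\sigma$ to relax the inflated constraint. The only difference is cosmetic: the paper tracks the passage from $\mu_\varepsilon$ to $\mu$ explicitly through the triangle and Cauchy--Schwarz inequalities in $\dot{\mathcal E}_\alpha$ to produce an $\mathcal O(k^{-1})$ error term, whereas you package that same calculation as norm-continuity of $\nu\mapsto\dot{G}_{\alpha,\vartheta-\vartheta'}(\nu)$, and you were also right to observe that (\ref{6})--(\ref{8}) do not use the separation condition (\ref{dist}).
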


\begin{proof} As seen from (\ref{5}), it is enough to establish
\[\dot{G}^\sigma_{\alpha,\vartheta-\vartheta'}(\mathbf A,\mathbf1)\geqslant G_{g,\vartheta}^\sigma(A,1).\]
Fix $\mu\in\ddot{\mathcal E}_\alpha^\sigma(\mathbf A,\mathbf1)$. By (\ref{def_E2}), for every $k\in\mathbb N$ there is $\mu_k\in\mathcal E_\alpha^{(1+\frac1k)\sigma}(\mathbf A,\mathbf1)$ with
\begin{equation*}\|\mu-\mu_k\|_\alpha^\cdot<k^{-1}.\end{equation*}
Applying the triangle inequality in $\dot{\mathcal E}_\alpha$, we then obtain from (\ref{dot0}) and (\ref{6})
\[\|\mu^+_k\|_g\leqslant\|\mu_k\|_\alpha=\|\mu_k\|^\cdot_\alpha=\|(\mu_k-\mu)+\mu\|^\cdot_\alpha
\leqslant\|\mu\|^\cdot_\alpha+k^{-1}.\]
Furthermore,
\[\langle\mu^+_k,\vartheta\rangle_g=\langle\mu_k,\vartheta-\vartheta'\rangle_\alpha=\langle\mu_k,\vartheta-\vartheta'\rangle_\alpha^\cdot=
\langle\mu_k-\mu,\vartheta-\vartheta'\rangle_\alpha^\cdot+\langle\mu,\vartheta-\vartheta'\rangle_\alpha^\cdot,\]
the first equality being valid by (\ref{7}) and the second by (\ref{dot0}). By the Cauchy--Schwarz inequality in $\dot{\mathcal E}_\alpha$,
\[\langle\mu_k-\mu,\vartheta-\vartheta'\rangle_\alpha^\cdot<k^{-1}\|\vartheta-\vartheta'\|_\alpha,\]
which together with the preceding two displays yields
\[G_{g,\vartheta}(\mu^+_k)=\|\mu^+_k\|^2_g+2\langle\mu^+_k,\vartheta\rangle\leqslant\|\mu\|^{\cdot2}_\alpha+
2\langle\mu,\vartheta-\vartheta'\rangle_\alpha^\cdot+\mathcal O(k^{-1}).\]
Noting that  $\mu^+_k\in\mathcal E_g^{(1+\frac1k)\sigma}(A,1)$, we therefore get
\[\dot{G}_{\alpha,\vartheta-\vartheta'}(\mu)\geqslant G_{g,\vartheta}^{(1+\frac1k)\sigma}(A,1)+\mathcal O(k^{-1}).\]
Since (\ref{suff}) holds, Theorem~\ref{th-g-cont} with $A_k:=A$ and $\sigma_k:=(1+k^{-1})\sigma$ can be applied, which gives by letting $k\to\infty$
\[\dot{G}_{\alpha,\vartheta-\vartheta'}(\mu)\geqslant G_{g,\vartheta}^\sigma(A,1).\]
The proof is completed by varying $\mu$ over $\ddot{\mathcal E}_\alpha^\sigma(\mathbf A,\mathbf1)$.\end{proof}

\section{Proofs of the assertions formulated in Section~\ref{sec-main}}\label{sec-proofs}

Throughout this section, $D^c$ is not $\alpha$-thin at infinity. The proofs presented below are based on auxiliary assertions established in Sections~\ref{sec-rel}, \ref{sec:aux}, and \ref{sec-aux-as} above.

\subsection{Proof of Theorem~\ref{main1}}\label{sec:pr1}Assume (\ref{suff}) holds. Then according to Theorem~\ref{th-g} there is the (unique) solution to Problem~\ref{pr1}, namely $\lambda^\sigma_A\in\mathcal E^\sigma_g(A,1)$ with
\[G_{g,\vartheta}(\lambda^\sigma_A)=G^\sigma_{g,\vartheta}(A,1).\]
Furthermore, we obtain from Lemma~\ref{laux1}, cf.\ (\ref{1}),
\[\dot{\lambda}^\sigma_{\mathbf A}:=\lambda^\sigma_A-(\lambda^\sigma_A)'\in\ddot{\mathcal E}_\alpha^\sigma(\mathbf A,\mathbf1),\]
which in view of (\ref{4}) gives
\[G^\sigma_{g,\vartheta}(A,1)=G_{g,\vartheta}(\lambda^\sigma_A)=\dot{G}_{\alpha,\vartheta-\vartheta'}(\dot{\lambda}^\sigma_{\mathbf A})
\geqslant\dot{G}^\sigma_{\alpha,\vartheta-\vartheta'}(\mathbf A,\mathbf1).\]
Applying Lemma~\ref{l-ineq} we see that equality in fact prevails in the inequality here, and hence $\dot{\lambda}^\sigma_{\mathbf A}$ thus defined solves, indeed, Problem~\ref{pr3}. On account of the uniqueness of a solution to Problem~\ref{pr3} (Lemma~\ref{l-unique}), this establishes Theorem~\ref{main1}.

\subsection{Proof of Theorem~\ref{infcap}} Assume that $c_g(A)=\infty$. Choose an increasing sequence $(K_k)$ of compact sets whose union equals $D$. As $c_g(K_k)<\infty$, it follows
from the subadditivity of $c_g(\cdot)$ on universally measurable sets \cite[Lemma~2.3.5]{F1} that
$c_g(A\setminus K_k)=\infty$ for all $k\in\mathbb N$. Hence, for every $k$ there is
$\nu_k\in\breve{\mathfrak M}^+(A\setminus K_k,1)$ with compact in $D$ support $S_D^{\nu_k}$ such that
\begin{equation}\label{to0}\|\nu_k\|^2_g\leqslant 1/k.\end{equation}
Clearly, the $K_k$ can be chosen successively so that $K_k\cup S^{\nu_k}_D\subset K_{k+1}$.
Any compact set $K\subset D$ is contained in a certain $K_k$ with $k$ large enough, and hence $K$ has
points in common with only finitely many $S^{\nu_k}_D$. Therefore $\sigma_0$ given by
\[\sigma_0(\varphi):=\sum_{k\in\mathbb N}\,\nu_k(\varphi)\text{ \ for all\ }\varphi\in C_0(D)\]
is a positive Radon measure on $D$ carried by $A$. Furthermore, $\sigma_0(A)=\infty$. For each
$\sigma\in\mathfrak C(A)\cup\{\infty\}$ such that $\sigma\geqslant\sigma_0$ we thus have
\[\nu_k\in\mathcal E^\sigma_g(A,1)\text{ \ for all\ }k\in\mathbb N,\]
which in view of (\ref{1}) implies
\begin{equation}\label{i}\nu_k-\nu_k'\in\ddot{\mathcal E}_\alpha^\sigma(\mathbf A,\mathbf1).\end{equation}
Assuming now in addition that $\vartheta=0$ (then $\dot{G}_{\alpha,\vartheta}(\mu)=\|\mu\|_\alpha^{\cdot2}$ for all $\mu\in\dot{\mathcal E}_\alpha$, hence $\dot{G}^\sigma_{\alpha,\vartheta}(\mathbf A,\mathbf1)\geqslant0$), we then obtain from (\ref{i}), (\ref{2}), and (\ref{to0})
\[0\leqslant\dot{G}^\sigma_{\alpha,\vartheta}(\mathbf A,\mathbf1)\leqslant\|\nu_k-\nu_k'\|_\alpha^{\cdot2}=\|\nu_k\|^2_g\leqslant 1/k\text{ \ for all\ }k\in\mathbb N,\]
and so $\dot{G}^\sigma_{\alpha,\vartheta}(\mathbf A,\mathbf1)=0$ for all $\sigma$ specified above. As $\|\cdot\|_\alpha^\cdot$ is a norm, $\dot{G}^\sigma_{\alpha,\vartheta}(\mathbf A,\mathbf1)$ cannot therefore be attained among the (nonzero) admissible measures $\mu\in\ddot{\mathcal E}_\alpha^\sigma(\mathbf A,\mathbf1)$.

\subsection{Proof of Corollary~\ref{lusin}} Let $\vartheta=0$. The implication (i)$\Rightarrow$(iii) follows from Theorem~\ref{main1}, (ii)$\Rightarrow$(i) from Theorem~\ref{infcap}, and (iii)$\Rightarrow$(ii) is obvious.

Assuming now $c_g(A)<\infty$, we proceed by proving (\ref{e-l}). According to Theorem~\ref{th-equi}, there is the $g$-equilibrium measure
$\gamma=\gamma_A$ on $A$. By Lusin's type theorem \cite[Theorem~3.6]{L} applied to each of
$\kappa_\alpha\gamma$ and $\kappa_\alpha\gamma'$, there exists for any $\varepsilon>0$ an open set
$\Omega\subset\mathbb R^n$ with $c_\alpha(\Omega)<\varepsilon$ such that $\kappa_\alpha\gamma$ and
$\kappa_\alpha\gamma'$ are both continuous relative to $\mathbb R^n\setminus\Omega$. On account of Remark~\ref{qe}, we see from (\ref{eq2}) and (\ref{3.4}) that there is no loss of generality in assuming $\kappa_\alpha\gamma=\kappa_\alpha\gamma'+1$ everywhere on $A\setminus\Omega$, which implies
$\kappa_\alpha\gamma=\kappa_\alpha\gamma'+1$ on $(\text{\rm Cl}_{\mathbb R^n}A)\setminus\Omega$.
As $\varepsilon$ is arbitrary, we thus have
\[\kappa_\alpha\gamma=\kappa_\alpha\gamma'+1\text{ \ q.e.\ on\ }\partial D\cap\text{\rm Cl}_{\mathbb R^n}A.\]
But, by (\ref{bal-eq}), $\kappa_\alpha\gamma=\kappa_\alpha\gamma'$ n.e.\ on
$\partial D$, hence q.e., for $\{\kappa_\alpha\gamma\ne\kappa_\alpha\gamma'\}$ is a
Borel set. In view of the preceding display, this is possible only provided that (\ref{e-l}) holds.

\subsection{Proof of Theorem~\ref{main2} and Corollary~\ref{cor-main2}}\label{sec-prec} Fix $\sigma\in\mathfrak C(A)$, and assume (\ref{suff}) holds. By Theorem~\ref{main1}, there is the (unique)  solution $\dot{\lambda}:=\dot{\lambda}^\sigma_{\mathbf A}$ to Problem~\ref{pr3}, which certainly satisfies (\ref{desc0a}) because of (\ref{th1-1}) and whose positive part $\dot{\lambda}^+$ solves Problem~\ref{pr1}. Assuming now in addition that $\kappa_\alpha\vartheta^-|_A$ is upper bounded, we then see from Theorem~\ref{desc-th2} that (\ref{desc1}) and (\ref{desc2}) both hold with $\lambda:=\dot{\lambda}^+$ and some $w\in\mathbb R$. But, on account of (\ref{3.4}), (\ref{def-f}), and (\ref{desc0a}),
\begin{equation*}g(\dot{\lambda}^++\vartheta)=\kappa_\alpha\dot{\lambda}+\kappa_\alpha(\vartheta-\vartheta')=\kappa_\alpha\dot{\lambda}+f\text{ \ n.e.\ on\ }D.\end{equation*}
Since any Borel subset of $D$ with $c_\alpha(\cdot)=0$ cannot carry either of $\dot{\lambda}^+$ and $\sigma$, substituting this into (\ref{desc1}) and (\ref{desc2}) gives  (\ref{desc1a}) and (\ref{desc2a}) with the same $w\in\mathbb R$.

In the rest of the proof, $\vartheta=0$. Then $\kappa_\alpha\dot{\lambda}+f=g\dot{\lambda}^+>0$ on $D$, which substituted into (\ref{desc2a}) implies $w>0$. Furthermore, (\ref{desc2a}) itself takes now the form
\[\kappa_\alpha\dot{\lambda}^+\leqslant w+\kappa_\alpha\dot{\lambda}^-\quad\dot{\lambda}^+\mbox{-a.e.}\]
Choose an increasing sequence $(K_k)$ of compact sets with the union $D$, and write $\dot{\lambda}^+_k:=\dot{\lambda}^+|_{K_k}$. Then the last display remains valid with $\dot{\lambda}^+$ replaced by  $\dot{\lambda}^+_k$, i.e.
\[\kappa_\alpha\dot{\lambda}^+_k\leqslant w+\kappa_\alpha\dot{\lambda}^-\quad\dot{\lambda}^+_k\mbox{-a.e.}\]
As seen from $E_g(\dot{\lambda}^+_k)\leqslant E_g(\dot{\lambda}^+)<\infty$ and Lemma~\ref{eq-r-g}, $E_\alpha(\dot{\lambda}^+_k)<\infty$. Therefore applying
\cite[Theorems~1.27, 1.29]{L} shows that the preceding display holds, in fact, everywhere on $\mathbb R^n$.
Having observed that $\kappa_\alpha\dot{\lambda}^+_k\uparrow\kappa_\alpha\dot{\lambda}^+$ pointwise on $\mathbb R^n$, we obtain (\ref{desc2''}) by letting $k\to\infty$. Combining (\ref{desc2''}) and (\ref{desc1a}) establishes (\ref{desc1''}).

Assuming now that $\alpha<2$ and $m(D^c)>0$, we proceed by proving (\ref{s1}). To the contrary, suppose there is $x_0\in S_D^{\sigma}$ with $x_0\not\in
S^{\dot{\lambda}^+}_D$. Then one can choose an open neighborhood $V$ of $x_0$ in $D$ so that $V\cap S^{\dot{\lambda}^+}_D=\varnothing$. Since $(\sigma-\dot{\lambda}^+)(V)>0$, it follows from (\ref{desc1''}) that there is $x_1\in V$ with the property
\begin{equation*}\kappa_\alpha\dot{\lambda}^+(x_1)=w+\kappa_\alpha(\dot{\lambda}^+)'(x_1).\end{equation*}
As $\kappa_\alpha\dot{\lambda}^+$ is $\alpha$-harmonic on $V$, while
$w+\kappa_\alpha(\dot{\lambda}^+)'$ $\alpha$-super\-harmonic on $\mathbb R^n$ \cite[Chapter~I, Section~6, n$^\circ$~20]{L}, we see from this and (\ref{desc2''}) by \cite[Theorem~1.28]{L} that
\[\kappa_\alpha\dot{\lambda}=w\quad m\mbox{-a.e.\ on\ }\mathbb R^n.\]
This implies $w=0$, because $\kappa_\alpha\dot{\lambda}=0$ n.e., hence $m$-a.e., on $D^c$. A
contradiction.

\subsection{Proof of Theorem~\ref{main2-sep}}\label{sec:prlast} Assuming that the separation condition (\ref{dist}) holds, fix $\sigma\in\mathfrak C(A)$ and $\dot{\lambda}\in\ddot{\mathcal E}_\alpha^\sigma(\mathbf A,\mathbf1)$. Combining Corollary~\ref{cor-sep} with Lemma~\ref{l-sep2} implies that this $\dot{\lambda}$ solves Problem~\ref{pr3} if and only if both (a) and (b) are valid, where
\begin{itemize}\item[(a)] $\dot{\lambda}^+$ solves Problem~\ref{pr1},
\item[(b)] (\ref{desc0a}) holds, i.e. $\dot{\lambda}^-=(\dot{\lambda}^+)'$.\end{itemize}
Assuming now in addition that $\kappa_\alpha\vartheta^-|_A$ is upper bounded, we see by Theorem~\ref{desc-th2} that (a) is equivalent to the claim that $\dot{\lambda}^+$ satisfies both (\ref{desc1}) and (\ref{desc2}) with some $w\in\mathbb R$. Since $g(\dot{\lambda}^++\vartheta)=\kappa_\alpha\dot{\lambda}+f$ n.e.\ on $D$ by (b), this claim in turn is equivalent to the assertion that $\dot{\lambda}$ satisfies both (\ref{desc1a}) and (\ref{desc2a}) with the same $w\in\mathbb R$ (cf.\ the first paragraph in Section~\ref{sec-prec}). This altogether establishes Theorem~\ref{main2-sep}.

\subsection{Proof of Theorems~\ref{th-al-cont} and \ref{th-al-cont'}} Let $(A_k)$ and $(\sigma_k)$ be as described in the beginning of Section~\ref{sec-cont}, and assume that (\ref{c1'}) holds. Write $\mathbf A_k:=(A_k,F)$. By Lemma~\ref{l-ineq} applied to $\mathbf A_k$ and $\sigma_k$, resp.\ $\mathbf A$ and $\sigma$,
\begin{align*}\dot{G}_{\alpha,\vartheta-\vartheta'}^{\sigma_k}(\mathbf A_k,\mathbf1)&=G^{\sigma_k}_{g,\vartheta}(A_k,1)\text{ \ for all\ }k,\\
\dot{G}_{\alpha,\vartheta-\vartheta'}^\sigma(\mathbf A,\mathbf1)&=G^\sigma_{g,\vartheta}(A,1),\end{align*}
which combined with (\ref{g-cont1}) results in (\ref{g-cont1'}).

According to Theorem~\ref{main1}, there exists the (unique) solution $\dot{\lambda}_k:=\dot{\lambda}^{\sigma_k}_{\mathbf A_k}$, resp.\ $\dot{\lambda}:=\dot{\lambda}^\sigma_{\mathbf A}$, to Problem~\ref{pr3} with $\mathbf A_k$ and $\sigma_k$, resp.\ $\mathbf A$ and $\sigma$, and moreover
\[\dot{\lambda}_k=\dot{\lambda}_k^+-(\dot{\lambda}_k^+)',\quad\dot{\lambda}=\dot{\lambda}^+-(\dot{\lambda}^+)',\]
where $\dot{\lambda}_k^+$, resp.\ $\dot{\lambda}^+$, solves Problem~\ref{pr1} with $A_k$ and $\sigma_k$, resp.\ $A$ and $\sigma$. Applying Theorem~\ref{thm} to the (bounded, hence extendible) measure $\dot{\lambda}_k^+-\dot{\lambda}^+\in\mathcal E_g$, we get
\[\|\dot{\lambda}_k^+-\dot{\lambda}^+\|_g=\|(\dot{\lambda}_k^+-\dot{\lambda}^+)-(\dot{\lambda}_k^+-\dot{\lambda}^+)'\|^\cdot_\alpha,\]
with together with the preceding display and (\ref{g-cont2}) gives (\ref{g-cont2'}).

Assume now in addition that (\ref{sep}) holds; then each of $\mathbf A_k$ and $\mathbf A$ has separated plates. Therefore, by Lemma~\ref{l-sep} and Corollary~\ref{cor-sep}, $\lambda_k:=\dot{\lambda}_k$, resp.\ $\lambda:=\dot{\lambda}$, belongs to $\mathcal E^\sigma_\alpha(\mathbf A_k,\mathbf1)$, resp.\ $\mathcal E^\sigma_\alpha(\mathbf A,\mathbf1)$, and moreover, solves Problem~\ref{pr4} with $\mathbf A_k$ and $\sigma_k$, resp.\ $\mathbf A$ and $\sigma$. Combining (\ref{g-cont1'}) with (\ref{ext2}), applied to $\mathbf A_k$ and $\sigma_k$, resp.\ $\mathbf A$ and $\sigma$, gives (\ref{g-cont1''}), while (\ref{g-cont2'}) in view of (\ref{dot0}) can now be rewritten as (\ref{g-cont2''}).

Since all the $\lambda_k,\lambda$ belong to $\mathcal E^{\leqslant q}_\alpha(\mathbf A_1)$ with $q\geqslant2$ (for the notation, see Section~\ref{sec:intr}), (\ref{g-cont2''}) means that $\lambda_k\to\lambda$ in the strong topology on $\mathcal E^{\leqslant q}_\alpha(\mathbf A_1)$, determined by the standard energy norm. But according to \cite[Theorem~1]{Z1} quoted in Section~\ref{sec:intr} (cf.\ also footnote~\ref{foot3}), the strong topology on $\mathcal E^{\leqslant q}_\alpha(\mathbf A_1)$ is stronger than the vague topology, the condenser $\mathbf A_1$ satisfying the separation condition (\ref{sep}). Thus
\[\lambda_k\to\lambda\text{ \ vaguely in\ }\mathfrak M(\mathbb R^n),\]
which is however equivalent to (\ref{g-cont3'}), the equivalence being seen from (\ref{sep}) by applying the Tietze--Urysohn extension theorem \cite[Theorem~0.2.13]{E}.

\section{An advantage of weak energy for condenser problems}\label{sec-adv}

In this section $\sigma=\infty$, $\vartheta=0$, $c_g(A)<\infty$, and $D^c$ is not $\alpha$-thin at infinity. Then Problem~\ref{pr3} reduces to the problem on the existence of $\dot{\lambda}_{\mathbf A}\in\ddot{\mathcal E}_\alpha(\mathbf A,\mathbf1)$, cf.\ (\ref{circ}), with
\begin{equation}\label{wdot}\|\dot{\lambda}_{\mathbf A}\|^{\cdot2}_\alpha=\dot{w}_\alpha(\mathbf A,\mathbf1):=\inf_{\mu\in\ddot{\mathcal E}_\alpha(\mathbf A,\mathbf1)}\|\mu\|^{\cdot2}_\alpha.\end{equation}
According to Theorem~\ref{main1}, this problem has the (unique) solution $\dot{\lambda}_{\mathbf A}$, and moreover
\begin{equation}\label{sol-inf}\dot{\lambda}_{\mathbf A}=(\gamma_A-\gamma_A')/c_g(A),\end{equation}
where $\gamma_A$ is the $g$-equilibrium measure on $A$ (cf.\ Remark~\ref{rem-g}).

Having now replaced in our definitions weak energy by standard energy, we obtain the class $\mathcal E_\alpha(\mathbf A,\mathbf1)$ and the extremal value $w_\alpha(\mathbf A,\mathbf1)$ (cf.\ (\ref{eprcl}) and (\ref{epr})) in place of $\ddot{\mathcal E}_\alpha(\mathbf A,\mathbf1)$ and $\dot{w}_\alpha(\mathbf A,\mathbf1)$, respectively.
To justify an advantage of weak energy for condenser problems, we shall show that
\begin{itemize}
\item[($\mathcal W$)]{\it $w_\alpha(\mathbf A,\mathbf1)$ cannot in general be attained within the class\/ $\mathcal E_\alpha(\mathbf A,\mathbf1)$},
\end{itemize}
and hence the quoted assertion on the solvability of  problem (\ref{wdot}) fails in general once the problem is reformulated in the setting of standard energy. (Observe that according to  Corollary~\ref{cor-sep}, this may occur only provided that $\text{\rm dist}(A,F)=0$.)

\begin{lemma}\label{l-f} $\dot{w}_\alpha(\mathbf A,\mathbf1)=w_\alpha(\mathbf A,\mathbf1)$.\end{lemma}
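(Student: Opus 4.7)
The plan is to prove the two inequalities $\dot{w}_\alpha(\mathbf A,\mathbf 1)\leqslant w_\alpha(\mathbf A,\mathbf 1)$ and $w_\alpha(\mathbf A,\mathbf 1)\leqslant \dot{w}_\alpha(\mathbf A,\mathbf 1)$ separately, with the first being trivial from the inclusion of admissible classes and the second being a straightforward density/continuity argument, using crucially (\ref{dot0}) to identify the weak and standard energy norms on $\mathcal E_\alpha$.

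For the first inequality, I observe that since $\mathcal E_\alpha\subset\dot{\mathcal E}_\alpha$ by (\ref{w-incl}) and since any $\mu\in\mathcal E_\alpha(\mathbf A,\mathbf 1)$ trivially lies in $\text{\rm Cl}_{\dot{\mathcal E}_\alpha}\mathcal E_\alpha(\mathbf A,\mathbf 1)$, the defining formula (\ref{circ}) yields
\[\mathcal E_\alpha(\mathbf A,\mathbf 1)\subset\ddot{\mathcal E}_\alpha(\mathbf A,\mathbf 1).\]
Combining this with the norm identity $\|\mu\|_\alpha^\cdot=\|\mu\|_\alpha$ valid for $\mu\in\mathcal E_\alpha$ by (\ref{dot0}) shows that the infimum defining $\dot{w}_\alpha(\mathbf A,\mathbf 1)$ is taken over a larger class of measures whose weak and standard energies coincide, hence $\dot{w}_\alpha(\mathbf A,\mathbf 1)\leqslant w_\alpha(\mathbf A,\mathbf 1)$.

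For the reverse inequality, I would fix any $\mu\in\ddot{\mathcal E}_\alpha(\mathbf A,\mathbf 1)$. By the equivalent characterization of $\ddot{\mathcal E}_\alpha^\sigma(\mathbf A,\mathbf 1)$ given just after (\ref{def_E2}), applied in the unconstrained case $\sigma=\infty$ (so that the side condition $\mu_\varepsilon^+\leqslant(1+\varepsilon)\sigma$ is vacuous), there exists a sequence $(\mu_k)\subset\mathcal E_\alpha(\mathbf A,\mathbf 1)$ with $\|\mu_k-\mu\|_\alpha^\cdot\to 0$. The triangle inequality in the pre-Hilbert space $\dot{\mathcal E}_\alpha$ gives $\|\mu_k\|_\alpha^\cdot\to\|\mu\|_\alpha^\cdot$, and another appeal to (\ref{dot0}) yields $\|\mu_k\|_\alpha^\cdot=\|\mu_k\|_\alpha$. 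Hence
\[w_\alpha(\mathbf A,\mathbf 1)\leqslant \|\mu_k\|_\alpha^2=(\|\mu_k\|_\alpha^\cdot)^2\to\|\mu\|_\alpha^{\cdot 2},\]
and taking the infimum over $\mu\in\ddot{\mathcal E}_\alpha(\mathbf A,\mathbf 1)$ on the right produces $w_\alpha(\mathbf A,\mathbf 1)\leqslant\dot{w}_\alpha(\mathbf A,\mathbf 1)$, completing the proof.

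There is no real obstacle here: the argument is essentially a density statement paired with continuity of the weak energy norm. The only point requiring attention is to notice that the definition (\ref{def_E2}) in the unconstrained case reduces exactly to the closure statement (\ref{circ}), so approximation by $\mathcal E_\alpha$-measures is available with no further hypothesis on $\sigma$; and that the coincidence of norms on $\mathcal E_\alpha$ provided by (\ref{dot0}) is what allows the approximating standard energies to converge to the weak energy of the limit. Note that the hypotheses of the section ($c_g(A)<\infty$, $\vartheta=0$, $D^c$ not $\alpha$-thin at infinity) are not used in this equality of infima; they are only needed to guarantee that $\dot{w}_\alpha(\mathbf A,\mathbf 1)$ is actually attained, via (\ref{sol-inf}).
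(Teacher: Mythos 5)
Your proof is correct and follows essentially the same two-step strategy as the paper: the trivial inclusion $\mathcal E_\alpha(\mathbf A,\mathbf1)\subset\ddot{\mathcal E}_\alpha(\mathbf A,\mathbf1)$ plus the norm identity (\ref{dot0}) for one inequality, and density of $\mathcal E_\alpha(\mathbf A,\mathbf1)$ in $\ddot{\mathcal E}_\alpha(\mathbf A,\mathbf1)$ (via (\ref{circ})) with continuity of the weak norm for the other, the only cosmetic difference being that the paper starts from a minimizing sequence in $\ddot{\mathcal E}_\alpha(\mathbf A,\mathbf1)$ while you approximate each fixed $\mu$ and then take the infimum. Your concluding observation that the section's standing hypotheses play no role in this equality of infima is also accurate.
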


\begin{proof} It is seen from (\ref{circ}) that $\mathcal E_\alpha(\mathbf A,\mathbf1)\subset\ddot{\mathcal E}_\alpha(\mathbf A,\mathbf1)$, which together with (\ref{dot0})
yields $\dot{w}_\alpha(\mathbf A,\mathbf1)\leqslant w_\alpha(\mathbf A,\mathbf1)$. For the opposite, choose $(\mu_k)\subset\ddot{\mathcal E}_\alpha(\mathbf A,\mathbf1)$ so that
\[\lim_{k\to\infty}\|\mu_k\|^{\cdot2}_\alpha=\dot{w}_\alpha(\mathbf A,\mathbf1),\]
and for every $\mu_k$ choose $\tilde{\mu}_k\in\mathcal E_\alpha(\mathbf A,\mathbf1)$ with $\|\mu_k-\tilde{\mu}_k\|_\alpha^\cdot<k^{-1}$; this $\tilde{\mu}_k$ exists by (\ref{circ}). Thus
\[w_\alpha(\mathbf A,\mathbf1)\leqslant\|\tilde{\mu}_k\|^2_\alpha=\|\tilde{\mu}_k\|^{\cdot2}_\alpha=\|(\tilde{\mu}_k-\mu_k)+\mu_k\|^{\cdot2}_\alpha\leqslant(\|\mu_k\|_\alpha^\cdot+k^{-1})^2,\]
and letting here $k\to\infty$ completes the proof.\end{proof}

For $n=3$ and $\alpha=2$, choose $D$ and $A$ as in \cite[Example~10.1]{FZ-Pot2}; then $c_{g^2_D}(A)<\infty$ and moreover $E_2(\gamma_A)=\infty$, where $\gamma_A$ is the $2$-Green equilibrium measure on $A$. Suppose, contrary to ($\mathcal W$), that there is
$\lambda_{\mathbf A}\in\mathcal E_2(\mathbf A,\mathbf1)$ with $\|\lambda_{\mathbf A}\|^2_2=w_2(\mathbf A,\mathbf1)$. Since $c_{g^2_D}(A)<\infty$, there is also $\dot{\lambda}_{\mathbf A}\in\ddot{\mathcal E}_2(\mathbf A,\mathbf1)$ with $\|\dot{\lambda}_{\mathbf A}\|^{\cdot2}_2=\dot{w}_2(\mathbf A,\mathbf1)$. As $\lambda_{\mathbf A}\in\ddot{\mathcal E}_2(\mathbf A,\mathbf1)$ and $\|\lambda_{\mathbf A}\|^2_2=\|\lambda_{\mathbf A}\|^{\cdot2}_2$, Lemma~\ref{l-f} implies that $\lambda_{\mathbf A}$ along with $\dot{\lambda}_{\mathbf A}$ solves problem (\ref{wdot}), and hence necessarily (see Lemma~\ref{l-unique} and (\ref{sol-inf}))
\[\lambda^+_{\mathbf A}=\gamma_A/c_{g^2_D}(A).\]
This is however impossible because $E_2(\gamma_A)=\infty$.

\section{Acknowledgement} I thank Bent Fuglede for many fruitful discussions on the topic of the paper.


\begin{thebibliography}{99}

\bibitem{Bec} Beckermann, B., Gryson, A.: Extremal rational functions on symmetric discrete
sets and superlinear convergence of the ADI method. Constr. Approx. {\bf 32}, 393--428 (2010)

\bibitem{B2} Bourbaki, N.: Elements of Mathematics. Integration. Chapters~1--6.
Springer, Berlin (2004)

\bibitem{Brelot} Brelot, M.: Sur le r\^{o}le du point \`{a} l'infini dans la th\'{e}orie des fonctions harmoniques. Ann. \'{E}c. Norm. Sup., {\bf 61}, 301--332 (1944)

\bibitem{Brelo2} Brelot, M.: On Topologies and Boundaries in Potential Theory. Lecture Notes in Math.,
vol.~175. Springer,
Berlin (1971)

\bibitem{Ca1} Cartan, H.: Th\'eorie du potentiel Newtonien:
\'energie, capacit\'e, suites de potentiels. Bull. Soc. Math. Fr.
{\bf 73}, 74--106 (1945)

\bibitem{Ca2} Cartan, H.: Th\'eorie g\'en\'erale du balayage en potentiel newtonien. Ann. Univ. Grenoble
{\bf 22}, 221--280 (1946)

\bibitem{D1} Deny, J.: Les potentiels d'\'energie finie. Acta Math.\ {\bf 82}, 107--183 (1950)

\bibitem{Doob} Doob, J.L.: Classical Potential Theory and Its Probabilistic Counterpart. Springer,
Berlin (1984)

\bibitem{DFHSZ} Dragnev, P.D., Fuglede, B., Hardin, D.P., Saff, E.B., Zorii, N.: Minimum Riesz energy
problems for a condenser with touching plates. Potential Anal.\ {\bf 44}, 543--577 (2016)

\bibitem{DFHSZ2} Dragnev, P.D., Fuglede, B., Hardin, D.P., Saff, E.B., Zorii, N.: Condensers with touching
plates and constrained minimum Riesz and Green energy problems. Constr. Approx. {\bf 50}, 369--401 (2019)

\bibitem{DFHSZ1} Dragnev, P.D., Fuglede, B., Hardin, D.P., Saff, E.B., Zorii, N.: Constrained minimum Riesz
energy problems for a condenser with intersecting plates. J. Anal. Math., to appear.
ArXiv:1710.01950v2 (2018)

\bibitem{DS} Dragnev, P.D., Saff, E.B.: Constrained energy problems with applications
to orthogonal polynomials of a discrete variable. J.~Anal.~Math.
{\bf 72}, 223–-259 (1997)

\bibitem{E}
Edwards, R.E.: Functional Analysis. Theory and Applications. Holt,
Rinehart and Winston, New York (1965)

\bibitem{F1} Fuglede, B.: On the theory of potentials in
locally compact spaces. Acta Math.\ {\bf 103}, 139--215 (1960)

\bibitem{FZ} Fuglede, B., Zorii, N.: Green kernels associated with Riesz kernels. Ann. Acad. Sci. Fenn. Math. {\bf 43}, 121--145 (2018)

\bibitem{FZ-Pot1} Fuglede, B., Zorii, N.: An alternative concept of Riesz energy of measures with application to generalized condensers. Potential Anal. {\bf 51}, 197--217 (2019)

\bibitem{FZ-Pot2} Fuglede, B., Zorii, N.: Various concepts of Riesz energy of measures and application to condensers with touching plates. Potential Anal. (2019) https://doi.org/10.1007/s11118-019-09803-w

\bibitem{Gauss} Gauss, C.F.: Allgemeine Lehrs\"atze in Beziehung auf die im
verkehrten Verh\"altnisse des Quadrats der Entfernung wirkenden
Anziehungs-- und Absto{\ss}ungs--Kr\"afte (1839). Werke {\bf 5}, 197--244 (1867)

\bibitem{HWZ}
Harbrecht, H., Wendland, W.L., Zorii, N.: Riesz minimal
energy problems on $C^{k-1,k}$-man\-if\-olds. Math. Nachr. {\bf 287}, 48--69 (2014)

\bibitem{L} Landkof, N.S.: Foundations of Modern Potential Theory. Springer, Berlin (1972)

\bibitem{OWZ}
Of, G., Wendland, W.L., Zorii, N.: On the numerical solution of minimal energy problems. Complex Var.
Elliptic Equ. {\bf 55}, 991--1012 (2010)

\bibitem{O}
Ohtsuka, M.: On potentials in locally compact spaces.
J. Sci. Hiroshima Univ. Ser.~A-1 {\bf 25}, 135--352 (1961)

\bibitem{R} Rakhmanov, E.A.: Equilibrium
measure and the distribution of zeros of extremal polynomials of a
discrete variable. Sb. Math. {\bf 187}, 1213--1228 (1996)

\bibitem{Riesz} Riesz, M.: Int\'egrales de Riemann--Liouville et potentiels. Acta Szeged {\bf 9}, 1--42 (1938)

\bibitem{ST} Saff, E.B., Totik, V.: Logarithmic potentials with external fields. Sprin\-ger, Berlin (1997)

\bibitem{Z1} Zorii, N.: A problem of minimum energy for space
condensers and Riesz kernels.  Ukr. Math. J. \textbf{41}, 29--36
(1989)

\bibitem{ZPot1} Zorii, N.: Interior capacities of condensers in locally compact spaces. Potential Anal.
{\bf 35}, 103--143 (2011)

\bibitem{ZPot2} Zorii, N.: Equilibrium problems for infinite dimensional vector potentials with
external fields. Potential Anal. {\bf 38}, 397--432 (2013)

\bibitem{ZPot3} Zorii, N.: Necessary and sufficient conditions for the solvability of
the Gauss variational problem for infinite dimensional vector measures. Potential Anal. {\bf 41}, 81--115 (2014)

\bibitem{Z-bal} Zorii, N.: A theory of inner Riesz balayage and its applications. ArXiv:1910.09946 (2019)
\end{thebibliography}
\end{document}